\numberwithin{equation}{section}
\theoremstyle{plain}
\newtheorem{theorem}{Theorem}[section]
\newtheorem{Cor}[theorem]{Corollary}
\newtheorem{lemma}[theorem]{Lemma}
\newtheorem{proposition}[theorem]{Proposition}
\newtheorem{assumption}{Assumption}
\theoremstyle{remark}
\newtheorem{remark}[theorem]{Remark}
\newtheorem{example}[theorem]{Example}
\newcommand{\norm}[1]{\left\lvert #1 \right\rvert} 
\def\rmd{\mathrm{d}}
\newcommand{\Var}{\mathrm{Var}}
\newcommand{\Law}{\mathrm{Law}}
\newcommand{\1}{\mathbbm{1}}
\newcommand{\W}{\boldsymbol{\mathcal{W}}}
\newcommand{\wnorm}[1]{\left\| #1 \right\|_{\mathsf{w}}} 
\newcommand{\rn}[1]{\Romanbar{#1}}
\newcommand{\law}{\operatorname{Law}}
\def\J{\mathrm{Couplings}}
\begin{document}

\pagenumbering{arabic}

\begin{frontmatter}
\title{Nonlinear Hamiltonian Monte Carlo \&
 its Particle Approximation}
\runtitle{Nonlinear HMC}

\begin{aug}
\Author[A]{\fnms{Nawaf} \snm{Bou-Rabee}\ead[label=e1]{nawaf.bourabee@rutgers.edu}}
\and
\Author[B]{\fnms{Katharina} \snm{Schuh}\ead[label=e2]{mmarsden@stanford.edu}}
\address[A]{Department of Mathematical Sciences \\ Rutgers University Camden \\ 311 N 5th Street \\ Camden, NJ 08102 USA \\ 
\href{mailto:nawaf.bourabee@rutgers.edu}{nawaf.bourabee@rutgers.edu}} 
\vspace{0.1in} 
\address[B]{Institute of Analysis and Scientific Computing \\ Technische Universität Wien \\ Wiedner Hauptstraße 8–10 \\ 1040 Wien, Austria \\ 
\href{mailto:katharina.schuh@tuwien.ac.at}{katharina.schuh@tuwien.ac.at}} 
\runAuthor{N. Bou-Rabee \and K. Schuh}
\end{aug}

\begin{abstract} 
    We present a nonlinear (in the sense of McKean) generalization of Hamiltonian Monte Carlo (HMC) termed \emph{nonlinear HMC} (nHMC) capable of sampling from nonlinear probability measures of mean-field type. When the underlying confinement potential is $K$-strongly convex and $L$-gradient Lipschitz, and the underlying interaction potential is gradient Lipschitz, nHMC can produce an $\varepsilon$-accurate approximation of a $d$-dimensional nonlinear probability measure in $L^1$-Wasserstein distance using $O((L/K) \log(1/\varepsilon))$ steps.  Owing to a uniform-in-steps propagation of chaos phenomenon, and without further regularity assumptions, unadjusted HMC with randomized time integration for the  corresponding particle approximation can achieve $\varepsilon$-accuracy in $L^1$-Wasserstein distance  using $O( (L/K)^{5/3} (d/K)^{4/3} (1/\varepsilon)^{8/3} \log(1/\varepsilon) )$ gradient evaluations.  These mixing/complexity upper bounds are a specific case of more general results developed in the paper for a larger class of non-logconcave, nonlinear probability measures of mean-field type.   
\end{abstract}

\begin{keyword}[class=MSC2010]
\kwd[Primary ]{60J05}
\kwd[; secondary ]{65C05,65P10}
\end{keyword}

\begin{keyword}
\kwd{Hamiltonian Monte Carlo}
\kwd{McKean-Vlasov Process}
\kwd{Propagation of Chaos}
\kwd{Markov chain Monte Carlo}
\kwd{Wasserstein Distance}
\kwd{Couplings}
\end{keyword}

\end{frontmatter}

\maketitle

\section{Introduction}


Within the machine learning community, there is growing interest in gradient-based Markov chain Monte Carlo (MCMC) methods for sampling from nonlinear probability measures of mean-field type \cite{MeMoNg18,chizat2018global,sirignano2020mean,de2020quantitative,RoVa2022, HuReSiSz21}, i.e.,  probability measures  on $\mathbb{R}^d$ of the form  $\bar{\mu}_*(\rmd x) \propto \exp(- U_{\bar{\mu}_*}(x))\rmd x$ where the potential $U_{\bar{\mu}_*}: \mathbb{R}^d \to \mathbb{R}$ is implicitly defined by \begin{align} \label{nonlinear_target}
 U_{\bar{\mu}_*}(x) \ = \ V(x)+\epsilon \int_{\mathbb{R}^d} W(x,u)\bar{\mu}_*(\rmd u) \;,
\end{align}
where $\epsilon>0$ measures the interaction strength and $V: \mathbb{R}^d \to \mathbb{R}$, $W: \mathbb{R}^{2d} \to \mathbb{R}$ are continuously differentiable functions.     Despite this interest, however, MCMC methods for sampling from $\bar{\mu}_*$, and corresponding complexity guarantees, are still scarce and underdeveloped.   To be sure, classical MCMC methods are not immediately applicable because while the functions $V$ and $W$ are typically known and evaluable, the direct evaluation of $U_{\bar{\mu}_*}$ is often not feasible when $\epsilon>0$.  
    

On the other hand, there have been substantial theoretical developments in both the understanding of nonlinear processes and their particle approximations. The latter phenomenon of approximating a nonlinear process on $\mathbb{R}^d$ by an $N$-particle system in $\mathbb{R}^d$ is commonly termed \emph{propagation of chaos}, which reflects the statistical independence that  occurs between particles (or `chaos propagation') in the infinite particle limit, i.e.,  $N \nearrow \infty$. This phenomenon can be traced back to Kac's works on the Boltzmann equation \cite{Ka60,kac1959probability, mckean1967exponential} and plays a crucial role in McKean's groundbreaking work on nonlinear diffusions  \cite{Mc66,mckean1967propagation}. Using probabilistic techniques, Sznitman \cite{Sz91} and M\'el\'eard \cite{Me96} provide quantitative finite-in-time  bounds between a nonlinear process and its corresponding particle approximation. Subsequently, quantitative uniform-in-time propagation of chaos bounds for a large class of Markov processes (including hybrid jump-diffusions) were developed by Mischler, Mouhot, and Wennberg via functional analytic techniques \cite{MiMoWe15}; see also the companion papers \cite{MiMo13, HaMi14} and related approaches tailored to the kinetic case \cite{monmarche2017long,guillin2021kinetic,guillin2021uniform,guillin2022uniform}. The scope of functional analytic approaches is quite broad encompassing e.g.~singular interactions 
\cite{jabin2018quantitative, guillin2021uniformB, rosenzweig2023modulated, de2023sharp, Serfaty2020, RoSe2023}. An elementary coupling approach to uniform-in-time propagation of chaos has also been developed for various $\bar{\mu}_*$-preserving McKean-Vlasov processes \cite{DuEbGuZi20}; for various extensions, see \cite{DuEbGuSc21, GuLeMo21, Sc22, monmarche2023elementary}, and in particular, for first steps towards MCMC for $\bar{\mu}_*$, see \cite[\S 4.2]{monmarche2023elementary}.   Nonlinear overdamped/kinetic Langevin  diffusions have also been studied in the context of non-convex learning \cite{HuReSiSz21,KaReTaYa2020}.
For a broad, two-part survey on nonlinear processes, propagation of chaos, and applications, see  \cite{ChDi22a,ChDi22b}.  


In view of the gap between MCMC and $\bar{\mu}_*$-preserving nonlinear processes, the contributions of this paper are threefold: (i) presents  a \emph{nonlinear Hamiltonian Monte Carlo} (nHMC) and proves convergence of nHMC to $\bar{\mu}_*$ (\Cref{thm:contr_nonlinear}); (ii) proves the existence of a \emph{uniform-in-steps} propagation of chaos phenomenon  which justifies a mean-field particle approximation to nHMC (\Cref{thm:propofchaos_uniform}); and (iii) quantifies the complexity of an \emph{unadjusted} HMC (uHMC) algorithm with randomized time integration for the underlying Hamiltonian flow of the particle approximation (\Cref{thm:propofchaos_uniform_unad}). In conjunction, (i)-(iii) indicate that the nonlinear measure $\bar{\mu}_*$ can be sampled from by an implementable  uHMC algorithm with a quantitative complexity upper bound.  At this point, it is important to emphasize that the particle approximation in (ii) introduces a large asymptotic bias in uHMC w.r.t.~the nonlinear measure $\bar{\mu}_*$, and therefore, a Metropolis-adjustment which only eliminates the asymptotic bias due to time discretization is not considered here.


Carrying out (i) is  technically interesting since the nHMC kernel incorporates the flow of a $\bar{\mu}_*$-preserving nonlinear Hamiltonian system per transition step, and as a consequence, the corresponding Markov chain is not in general time-homogeneous.  To be specific, consider a nonlinear probability measure \eqref{nonlinear_target} where $V$ and $W$ are gradient-Lipschitz and $\nabla V$ is \emph{strongly co-coercive} outside a Euclidean ball (a notion made precise in Assumption~\ref{ass_Vconv}); importantly, this assumption allows for non-convex $V$ (see Example~\ref{ex:multiwell} and Remark~\ref{rmk:ass_Vconv}). In this setting, by generalizing the coupling approach in \cite{BoEbZi2020},
\Cref{thm:contr_nonlinear}  proves that if the interaction and duration parameters sufficiently small (but independent of the dimension $d$), then the nHMC transition kernel is contractive in a particular $L^1$-Wasserstein distance equivalent to the standard one. As consequences, we obtain uniqueness of the invariant measure $\bar{\mu}_*$ of nHMC and convergence of the nHMC transition kernel $\bar{\pi}$ towards $\bar{\mu}_*$ in the standard $L^1$-Wasserstein distance, i.e., for any $\nu\in\mathcal{P}(\mathbb{R}^d)$,
\begin{align} \label{eq:intro_contr_nHMC}
\W^1(\bar{\mu}_*, \nu \bar{\pi}^m) \ \le \ \mathbf{A} e^{-cm}\W^1(\bar{\mu}_*, \nu) \;,  \qquad \text{for any } m\in\mathbb{N},
\end{align}
where the contraction rate $c$ and the constant $\mathbf{A}$ are both independent of the dimension $d$.  For notational brevity, we have suppressed the dependence on the underlying distribution in the nHMC transition kernel. 

This convergence result for nHMC motivates (ii), i.e., 
identifying a propagation of chaos phemonenon for nHMC. 
Intuitively, this phenomenon  states that the nHMC chain can be approximated arbitrarily well by an  \emph{exact} HMC (xHMC) chain for an $N$-particle system with  invariant  measure $\mu_*(dx) \propto \exp(-U(x)) dx$ on $\mathbb{R}^{N d}$ where $U: \mathbb{R}^{Nd} \to \mathbb{R}$ is 
 \[
 U(x) = \sum_{i=1}^N \Big(V(x^i)+\frac{\epsilon}{2N}\sum_{j=1}^N W(x^i,x^j)\Big) \;, \qquad x = (x^1, \dots, x^N ) \in \mathbb{R}^{Nd} \;.
\]  Here the modifier \emph{exact} signifies that the exact Hamiltonian flow of the $N$-particle system is used per transition step.  By coupling $N$ independent copies of nHMC with a single copy of xHMC applied to this $N$-particle system, and analogous to related but different   propagation of chaos arguments for nonlinear kinetic Langevin diffusions \cite{DuEbGuZi20, DuEbGuSc21, GuLeMo21, Sc22}, \Cref{thm:propofchaos_uniform} proves that for any $\nu\in\mathcal{P}(\mathbb{R}^d)$ and $m \in \mathbb{N}$, there exists a uniform-in-$m$ progagation of chaos phenomenon
\begin{align} \label{eq:intro_propofchaos}
\W_{\bar{\ell}_1^N}^1(\nu^{\otimes N}\pi^m,(\nu\bar{\pi}^m)^N) \ \le \ \mathbf{B} N^{-1/2} \;,  \qquad \text{for any } m\in\mathbb{N},
\end{align}
where $\W_{\bar{\ell}_1^N}^1$ is the $L^1$-Wasserstein distance with respect to  the averaged $\ell^1$-distance $\bar{\ell}_1^N(x,y)=N^{-1}\sum_{i=1}^N|x^i-y^i|$ and $\pi$ denotes the transition kernel of xHMC for the $N$-particle system. Importantly, the constant $\mathbf{B}$ is independent of both the number of transition steps $m$ and the number of particles $N$.

Since xHMC is rarely implementable on a computer, in (iii), we consider  uHMC with randomized time integration for the underlying Hamiltonian flow of the $N$-particle system.  This randomization leads to a better complexity without the need for  Hessian Lipschitz assumptions on $V$ and/or $W$ \cite{BouRabeeMarsden2022}.  Building on recent work on uHMC for an $N$-particle mean-field model \cite{BouRabeeSchuh2023} and uHMC with randomized time integration \cite{BouRabeeMarsden2022},  \Cref{thm:propofchaos_uniform_unad} proves that for any $\nu\in\mathcal{P}(\mathbb{R}^d)$,
\begin{align} \label{eq:intro_accuracy_uHMC}
\W_{\bar{\ell}_1^N}^1(\nu^{\otimes N}\pi_h^m,(\nu\bar{\pi}^m)^N) \ \le \ \mathbf{B} N^{-1/2}  +  \mathbf{C} h^{3/2}  \;,  \qquad \text{for any } m\in\mathbb{N},
\end{align}
where $\pi_h$ denotes the transition kernel of uHMC with step size $h>0$.
Crucially, for any accuracy $\varepsilon>0$,   the number of steps $m$, the step size $h$ and the number of particles $N$ needed to obtain $\varepsilon$-accuracy in $L^1$-Wasserstein distance w.r.t.~the nonlinear measure $\bar{\ell}_1^N$ can be read off of Theorem~\ref{thm:propofchaos_uniform_unad}; notably, this complexity upper bound holds for arbitrary initial distributions including cold starts.  For a promising new alternative approach to quantify the complexity of uHMC via functional inequalities, see the very recent work of Monmarch\'{e} on xHMC \cite{monmarche2022entropic} and the sequel paper for uHMC \cite{camrud2023second}.

\section*{Acknowledgements}
N.~Bou-Rabee has been partially supported by the National Science Foundation under Grant No.~DMS-2111224.

\section{Main results}
\subsection{Nonlinear HMC}
Let $V:\mathbb{R}^d\to\mathbb{R}$,  $W:\mathbb{R}^{2d}\to\mathbb{R}$ be continuously differentiable functions.
For $\mu\in\mathcal{P}(\mathbb{R}^d)$ define $U_{\mu}:\mathbb{R}^d\to\mathbb{R}$ by
\begin{align} \label{eq:U_mu}
	U_\mu(x)=V(x)+\epsilon\int_{\mathbb{R}^d}W(x,u)\mu(\rmd u),
\end{align}
where $\epsilon>0$ is a positive constant. We suppose that there exists a probability distribution $\bar{\mu}_*$ of the form 
\begin{align} \label{eq:invmeas_nonl}
	\bar{\mu}_*(\rmd x)=\frac{1}{Z}\exp(-U_{\bar{\mu}_*}(x))\rmd x \quad \text{with } Z=\int_{\mathbb{R}^d}\exp(-U_{\bar{\mu}_*}(x))\rmd x. 
\end{align}
We emphasize that this is equivalent to assuming that there exists a solution $U_{\bar{\mu}_*}$ to 
\begin{align}
    U_{\bar{\mu}_*}(x)=V(x)+\epsilon\int_{\mathbb{R}^d}W(x,u)Z^{-1}\exp(-U_{\bar{\mu}_*}(u))\rmd u.
\end{align}
\emph{Nonlinear HMC} (nHMC) is an MCMC method to sample from the nonlinear target measure $\bar{\mu}_*$ by generating a Markov chain on $\mathbb{R}^d$ using the flow of the following $\bar{\mu}_*$-preserving Hamiltonian dynamics \begin{align} \label{eq:hamdyn_nonl}
	\begin{cases}
		\frac{\rmd}{\rmd t} \bar{q}_t(x,v,\bar{\mu}) =\bar{p}_t(x,v,\bar{\mu})
		\\ \frac{\rmd}{\rmd t} \bar{p}_t(x,v,\bar{\mu}) =-\nabla U_{\bar{\mu}_t}(\bar{q}_t(x,v,\bar{\mu}))=-\nabla V(\bar{q}_t(x,v,\bar{\mu}))-\epsilon \int_{\mathbb{R}^d} \nabla_1 W(\bar{q}_t(x,v,\bar{\mu}),u) \bar{\mu}_t(\rmd u)
	\end{cases}
\end{align}
with initial condition $(\bar{q}_0,\bar{p}_0,\bar{\mu}_0)=(x,v,\bar{\mu})\in\mathbb{R}^{2d}\times \mathcal{P}(\mathbb{R}^d)$ where $\bar{\mu}_t :=\Law(\bar{q}_t(\tilde{x},\tilde{v},\bar{\mu}))$ with $(\tilde{x},\tilde{v})\sim \bar{\mu}\otimes\mathcal{N}(0,I_d)$. Compared to  classical Hamiltonian dynamics, this nonlinear Hamiltonian dynamics depends both on the current state $(\bar q_t, \bar p_t)$ and the law of $\bar q_t$, i.e.,  $\bar{\mu}_t$.  The nHMC transition step with complete velocity refreshment is defined by
\begin{align} \label{eq:transstep_nonl}
	\bar{\mathbf{X}}^{\bar{\mu}}(x) =\bar{q}_T(x,\xi,\bar{\mu}),
\end{align}
where $T>0$ is a duration parameter and $\xi\sim\mathcal{N}(0,I_d)$, i.e., the law of the initial velocity $\xi$ is $d$-dimensional standard normal. The corresponding transition kernel of nHMC is given by 
\begin{align} \label{eq:transkern_nonl}
	\bar{\pi}_{\bar{\mu}}(x,A) =\mathbb{P}[\bar{q}_T(x,\xi,\bar{\mu})\in A]
\end{align}
for any measurable set $A\subseteq \mathbb{R}^d$.
Note that the dependence of each transition step on the terminal law from the previous step implies that the nHMC chain is time-inhomogeneous, which differs from classical MCMC methods.
If there exists a probability measure $\bar{\mu}$ such that $\bar{\mu}=\bar{\mu}\bar{\pi}_{\bar{\mu}}$, then the nHMC chain initialized from $\bar{\mu}$  becomes time-homogeneous. In fact, the following result holds.

\begin{proposition} \label{prop:inv_meas}
If $\bar{\mu}_*$ given in \eqref{eq:invmeas_nonl} is well-defined, then $\bar{\pi}_{\bar{\mu}_*}$ leaves the measure $\bar{\mu}_*$ invariant, i.e. $\bar{\mu}_*\bar{\pi}_{\bar{\mu}_*}=\bar{\mu}_*$.
\end{proposition}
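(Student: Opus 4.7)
The plan is to establish invariance by showing that if the nonlinear Hamiltonian flow is initialized at $\bar{\mu}_* \otimes \mathcal{N}(0, I_d)$, then the law of the position component stays equal to $\bar{\mu}_*$ at every time $t\ge 0$; specializing to $t=T$ then gives $\bar{\mu}_* \bar{\pi}_{\bar{\mu}_*} = \bar{\mu}_*$. The guiding observation is that once the instantaneous law $\bar{\mu}_t$ appearing in \eqref{eq:hamdyn_nonl} is replaced by the fixed measure $\bar{\mu}_*$, the McKean--Vlasov system reduces to a \emph{classical} Hamiltonian system, whose invariant measure is standard.

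First I would introduce the auxiliary classical Hamiltonian flow $(\tilde q_t, \tilde p_t)$ driven by the frozen potential $U_{\bar{\mu}_*}$,
\begin{equation*}
\frac{\rmd}{\rmd t}\tilde q_t \ = \ \tilde p_t, \qquad \frac{\rmd}{\rmd t}\tilde p_t \ = \ -\nabla U_{\bar{\mu}_*}(\tilde q_t),
\end{equation*}
starting from $(\tilde q_0,\tilde p_0)\sim \bar{\mu}_*\otimes\mathcal{N}(0,I_d)$. This flow is symplectic and conserves the Hamiltonian $H(q,p)=|p|^2/2+U_{\bar{\mu}_*}(q)$, so by Liouville's theorem it preserves the Gibbs measure $\bar{\mu}_*\otimes\mathcal{N}(0,I_d)\propto \exp(-H(q,p))\,\rmd q\,\rmd p$; in particular, $\Law(\tilde q_t) = \bar{\mu}_*$ for every $t\ge 0$.

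Next I would identify this auxiliary flow with the nonlinear flow $(\bar q_t,\bar p_t)$ starting from the same initial condition. Because $\Law(\tilde q_t) \equiv \bar{\mu}_*$, the frozen-potential equation can be rewritten as $\frac{\rmd}{\rmd t}\tilde p_t = -\nabla U_{\Law(\tilde q_t)}(\tilde q_t)$, so the trajectory $(\tilde q_t,\tilde p_t)$ together with the constant curve $t\mapsto \bar{\mu}_*$ solves the McKean--Vlasov system \eqref{eq:hamdyn_nonl} with initial distribution $\bar{\mu}_*\otimes\mathcal{N}(0,I_d)$. Invoking uniqueness of solutions to this system forces $(\bar q_t,\bar p_t)=(\tilde q_t,\tilde p_t)$, hence $\Law(\bar q_t)=\bar{\mu}_*$ for all $t\ge 0$. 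Reading off $t=T$ and integrating against the initial distribution yields the claim $\bar{\mu}_*\bar{\pi}_{\bar{\mu}_*}=\bar{\mu}_*$.

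The only delicate step is the uniqueness used above: a priori several mean-field trajectories could start from $\bar{\mu}_*\otimes\mathcal{N}(0,I_d)$, in which case the frozen-potential trajectory is only a candidate. Under the gradient-Lipschitz hypotheses on $V$ and $W$ imposed later in the paper, however, a short-time contraction argument for the self-consistency map $(\bar{\mu}_t)_{t\in[0,\tau]}\mapsto(\Law(\bar q_t(\cdot,\cdot,(\bar{\mu}_s)_{s\le t})))_{t\in[0,\tau]}$ in $C([0,\tau];\mathcal{P}(\mathbb{R}^d))$ equipped with $\W^1$ yields uniqueness on a small interval, and iterating on successive intervals covers $[0,T]$. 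The remaining bookkeeping is routine.
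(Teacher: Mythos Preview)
Your proposal is correct and follows essentially the same route as the paper: freeze the measure at $\bar{\mu}_*$, observe that the resulting classical Hamiltonian flow preserves $\bar{\mu}_*\otimes\mathcal{N}(0,I_d)$, and then note that this frozen flow solves the nonlinear system \eqref{eq:hamdyn_nonl} when initialized at $\bar{\mu}_*$, so the two coincide. The only difference is that you make the uniqueness step explicit (via a short-time contraction argument for the self-consistency map), whereas the paper simply asserts that the dynamics coincide without discussing well-posedness of the McKean--Vlasov system.
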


\begin{proof}
    Consider the Hamiltonian dynamics
    \begin{align} \label{eq:hamdyn_fixedmeas}
\begin{cases}
    \frac{\rmd }{\rmd t}{q}_t(x,v)={p}_t(x,v)
    \\ \frac{\rmd}{\rmd t} {p}_t(x,v)=-\nabla U_{\bar{\mu}_*}({q}_t(x,v)),
\end{cases}
\end{align}
where $U_{\bar{\mu}_*}=V(x)+\epsilon \int_{\mathbb{R}^d}W(x,u)\bar{\mu}_*(\rmd u)$ with $\bar{\mu}_*$ given in \eqref{eq:invmeas_nonl}. The corresponding Hamiltonian flow preserves the Hamiltonian $H_{\bar{\mu}_*}(x,v)=U_{\bar{\mu}_*}(x)+|v|^2/2$ and volume, see \cite{MaRa94}. Hence, $\bar{\mu}_*$ is an invariant probability measure for the transition kernel $\pi(x,A)=\mathbb{P}[q_T(x,\xi)\in A]$ for any measurable set $A\subseteq \mathbb{R}^d$ where $\xi\sim\mathcal{N}(0, I_d)$; see e.g. \cite{Ne11,BoSa18}. In particular, if $\Law(x)=\bar{\mu}_*$, then $\Law(q_t(x,\xi))=\bar{\mu}_*$ for all $t\in[0,T]$ and the dynamics \eqref{eq:hamdyn_fixedmeas} coincides with \eqref{eq:hamdyn_nonl} with initial distribution $\bar{\mu}_*$. Hence, for this initial distribution, $\bar{\pi}_{\bar{\mu}_*}=\pi$ and $\bar{\mu}_*\bar{\pi}_{\bar{\mu}_*}=\bar{\mu}_*$.
\end{proof}

\subsection{Unadjusted HMC for the Particle Approximation}

A key idea in this work is that the nonlinear Hamiltonian flow satisfying \eqref{eq:hamdyn_nonl} can be approximated arbitrarily well by the classical Hamiltonian flow $(q_t(x,v), p_t(x,v))$ of a mean-field $N$-particle system on $\mathbb{R}^{N d}$ satisfying 
\begin{align} \label{eq:hamdyn_exact}
\begin{cases}
    \frac{\rmd }{\rmd t}{q}^i_t(x,v)={p}_t^i(x,v)
    \\ \frac{\rmd}{\rmd t} {p}^i_t(x,v)=-\nabla_i U({q}_t(x,v))
\end{cases}  \text{for  } i\in\{1,\ldots,N\}
\end{align}
with initial condition $ (x,v)\in\mathbb{R}^{Nd}$ and mean-field potential $U:\mathbb{R}^{Nd}\to\mathbb{R}$ defined by 
\begin{align} \label{eq:U_meanf}
	U(x)=\sum_{i=1}^N \Big(V(x^i)+\frac{\epsilon}{2N}\sum_{j=1}^N W(x^i, x^j)\Big) \qquad \text{where  } x=(x^1, \ldots, x^n) \;. 
\end{align}
Here $\nabla_i U \equiv \partial U/\partial x^i$.  Assuming $\int_{\mathbb{R}^{Nd}}\exp(-U(x))\rmd x <\infty$, a key property of \eqref{eq:hamdyn_exact} is that it leaves invariant the mean-field probability measure on $\mathbb{R}^{N d}$ given by 
\begin{align} \label{eq:invmeas_meanf}
	\mu_*(\rmd x)\propto \exp\Big(-\sum_{i=1}^N \Big(V(x^i)+\frac{\epsilon}{2N}\sum_{j=1}^N W(x^i,x^j)\Big) \Big)\rmd x.
\end{align} 
A transition step of \emph{exact HMC} (xHMC) is defined by
\begin{align} \label{eq:transstep_exact}
	\mathbf{X}(x)=\{q_T^i(x,\xi)\}_{i=1}^N \quad \text{with } \xi\sim \mathcal{N}(0,I_{Nd}) \;.
\end{align}
  The corresponding transition kernel is denoted by $\pi$ and satisfies $\mu_* \pi = \mu_*$. Since the exact flow of \eqref{eq:hamdyn_exact} is often unavailable, xHMC is not in general implementable, and therefore, we consider a time discretization of the exact Hamiltonian flow of the $N$-particle system $(q_t(x,v), v_t(x,v))$.

\smallskip

 Among time integrators for \eqref{eq:hamdyn_exact}, a randomized time integrator is selected since it substantially improves the complexity of the corresponding uHMC algorithm  without assuming a Hessian Lipschitz condition on either $V$ and/or $W$ \cite{BouRabeeMarsden2022}.  The integrator is akin to the randomized midpoint method for the kinetic Langevin diffusion \cite{shen2019randomized,Cao_2021_IBC,ErgodicityRMMHYB,leimkuhler2023contractionA,leimkuhler2023contractionB}.  The flow $(Q_t(x,v), P_t(x,v))$ of the randomized time integrator satisfies
\begin{align} \label{eq:hamdyn_meanf_num}
	\begin{cases}
		\frac{\rmd}{\rmd t} Q_t^i(x,v)= P_{ t }^i(x,v)
		\\ \frac{\rmd}{\rmd t} P_t^i(x,v)=-\nabla_i U \big( Q_t^{\star}(x,v) \big) 
	\end{cases} \text{for } i\in\{1,\ldots,N\}
\end{align}
where $h>0$ is a time step size; $\{ \mathcal{U}_k \}_{k \in \mathbb{N}_0}$ is an i.i.d.~sequence of standard uniform random variables; $Q_t^{\star}(x,v)$ is a random evaluation point defined by $Q_t^{\star}(x,v):=Q_{\lfloor t \rfloor}(x,v) + h \, \mathcal{U}_{\lfloor t \rfloor/h}   \, P_{\lfloor t \rfloor}(x,v)$;   and we  introduced the shorthand
\begin{align*}
	\lfloor t \rfloor=\max\{s \in h\mathbb{Z} : s \le t\} \quad \text{and} \quad \lceil t \rceil=\min\{s \in h\mathbb{Z} : s \ge t\}.
\end{align*}
Replacing the exact flow in \eqref{eq:transstep_exact} with this approximate flow yields \emph{unadjusted HMC} (uHMC) with transition step
\begin{align} \label{eq:transstep_meanf}
	\mathbf{X}_h(x)=\{Q_T^i(x,\xi)\}_{i=1}^N \quad \text{with } \xi\sim \mathcal{N}(0,I_{Nd}) \;.
\end{align}
 The uHMC transition kernel is denoted by $\pi_h$.
In this work, we do not consider the corresponding Metropolis-adjusted HMC algorithm for sampling from the mean-field probability measure $\mu_*$, because the asymptotic bias due to the particle approximation cannot be eliminated by Metropolis-adjustment. Since we avoid Metropolis-adjustment, time step or duration adaptivity can be easily included in \eqref{eq:transstep_meanf} --- as in \cite{HoGe2014,BoSa2017,kleppe2022, hoffman2022tuning, whalley2022randomized}.
For promising alternative strategies to the particle approximation considered here, see \cite{GOBET201871,gomes2020mean}.

\begin{table}
    \centering
    \begin{tabular}{c|c|c|c|c}
        method & short form & transition step & defined in & transition kernel \\  \hline 
        nonlinear HMC & nHMC & $\bar{\mathbf{X}}^{\bar{\mu}}(x)$ & \eqref{eq:transstep_nonl} & $\bar{\pi}_{\bar{\mu}}$ \\
        exact HMC & xHMC &  $ \mathbf{X}(x) $ & \eqref{eq:transstep_exact} & $\pi$ \\
        unadjusted HMC &  uHMC &  $\mathbf{X}_h(x)$ & \eqref{eq:transstep_meanf} & $\pi_h$ \\
        \hline
    \end{tabular}
    \caption{Frequently Used Notation}
    \label{tab:my_label}
\end{table}

\subsection{Assumptions}

The main results of this paper rely on the following assumptions on $V$ and $W$.  We stress that $\nabla_1 W \equiv \partial W/\partial x^1$ represents the gradient of $W$ in its first component.
\begin{assumption} \label{ass} Let $V:\mathbb{R}^d\to\mathbb{R}$ and $W:\mathbb{R}^{2d}\to\mathbb{R}$ be continuously differentiable functions satisfying:
\begin{enumerate}[label=\textbf{(\alph*)}]
\setlength{\itemsep}{0pt}
\item\label{ass_Vmin} $V$ has a global minimum at $0$, $V(0)=0$ and $V(x)\geq 0$ for all $x\in\mathbb{R}^d$. 
\item\label{ass_Vlip} $V$ is $L_1$-gradient Lipschitz, i.e., there exists $L_1 > 0$ such that $|\nabla V(x) - \nabla V(y)| \le L_1 |x-y|$ for all $x, y \in \mathbb{R}^d$.
\item\label{ass_Vconv} $V$ is asymptotically strongly co-coercive, i.e., there exist $\mathcal{R} \ge 0 $,  $K > 0$ and $L_2 > 0$ such that
\begin{align*}
\langle x-y, \nabla V(x)-\nabla V(y) \rangle \ \geq \ K \norm{x-y}^2 + \frac{1}{L_2} \norm{\nabla V(x)-\nabla V(y)}^2 \;, \quad \text{for all $x,y\in\mathbb{R}^d$ with $|x-y|\geq \mathcal{R}$.}
\end{align*}
\item\label{ass_Wlip} $W$ is symmetric, i.e., $W(x,y)=W(y,x)$ and $\tilde{L}$-gradient Lipschitz, i.e., there exists $\tilde{L} \ge 0$ such that \begin{align*}
|\nabla_1 W(x,y) - \nabla_1 W(\tilde{x},\tilde{y})| \ \le \ \tilde{L} (|x - \tilde{x}| + |y-\tilde{y}|) \quad \text{for all $x,y,\tilde{x},\tilde{y} \in\mathbb{R}^d$  }  \;.  
\end{align*}
\end{enumerate}
\end{assumption}
\noindent
\emph{ 
Note that if \ref{ass_Vlip} and \ref{ass_Vconv} hold for $L_1$ and $L_2$, then these conditions also necessarily hold for $\max(L_1,L_2)$.
Therefore, for the sake of clarity/readability, all of the  conditions and results will  often be stated in terms of $L:=\max(L_1,L_2)$. } %

\medskip

Referring to \Cref{ass}, \ref{ass_Vmin} can always be obtained by adjusting $V$. By \ref{ass_Vmin}, \ref{ass_Vlip} and \ref{ass_Wlip}, note that 
\begin{align*}
|\nabla V(x)| \ = \ |\nabla V(x)-\nabla V(0)| \ \leq \  L_1 |x|~~\text{and}
~~|\nabla_1 W(x,y)| \ \leq \  \tilde{L}(|x|+|y|)+\mathbf{W}_0  \quad \text{for all $x,y\in\mathbb{R}^d$} \;,
\end{align*}
with $\mathbf{W}_0=|\nabla_1 W(0,0)|$.

\begin{remark}[Notion of Asymptotic Strong Co-coercivity of $\nabla V$] \label{rmk:ass_Vconv}
As far as we can tell, \Cref{ass}~\ref{ass_Vconv} is new and therefore deserves some additional comments.  It allows for multi-well potentials (see Example~\ref{ex:multiwell}) and can be viewed as a strengthening of the asymptotic strong convexity condition appearing in previous works proving Wasserstein mixing time upper bounds in non-convex settings; see, e.g.,  \cite{cheng2018sharp,BoEbZi2020,BouRabeeSchuh2023}.   If $V$ is $\Lambda$-gradient Lipschitz and (globally) $m$-strongly convex,  then \ref{ass_Vconv} holds with $\mathcal{R}=0$, $K=m \Lambda/(m+\Lambda)$ and $L_2 = m+\Lambda$ \cite[Theorem 2.1.12]{nesterov2018lectures}.  Therefore, it is natural to assume this condition also holds outside a Euclidean ball.   
Moreover, by ~\ref{ass_Vlip} and \ref{ass_Vconv}, note that
\begin{align} \label{eq:V_asymp_stronglyconv}
\langle x-y, \nabla V(x)-\nabla V(y) \rangle  \ \geq \  K \norm{x-y}^2 + \frac{1}{L_2} \norm{\nabla V(x) - \nabla V(y)}^2 - \hat{C} \qquad \text{ for all } x,y\in\mathbb{R}^d
\end{align}
with $\hat{C}=\frac{L_1 L_2 + K L_2 + L_1^2 }{L_2} \mathcal{R}^2$.  Repeating this computation  with \ref{ass_Vlip} and \ref{ass_Vconv} in terms of $L$, one finds that $\hat{C} = (2 L + K) \mathcal{R}^2$.  On the other hand, if $V$ is  $\Lambda$-gradient Lipschitz but only asymptotically $m$-strongly convex, i.e., there exist $m>0$ and $\Upsilon \ge 0$ such that \[
\langle x-y, \nabla V(x)-\nabla V(y) \rangle  \ \geq \ m \norm{x-y}^2 - \Upsilon,  \quad \text{for all $x,y \in \mathbb{R}^d$} \;, 
\] then \ref{ass_Vconv} holds with $\mathcal{R}= \sqrt{2 \Upsilon/m}$, $K=m/4$ and $L_2 = (4 \Lambda^2)/m$.  Conversely, note that \ref{ass_Vconv} implies $V$ is asymptotically strongly convex, and by the Cauchy-Schwarz inequality, $L_2$-gradient Lipschitz for $\norm{x-y} \ge \mathcal{R}$; hence, $K \le L_2$. In addition to being natural, another advantage of \ref{ass_Vconv} is that it yields theoretical results that interpolate between  these cases.     
\end{remark}


\begin{example}
\label{ex:multiwell}
A concrete example of a multi-well function $V: \mathbb{R}^d \to \mathbb{R}$ that satisfies \Cref{ass}~\ref{ass_Vconv} is defined by 
$V(x) = (1/2) \norm{x}^2 + e^{-(a/2) \norm{x}^2}$ where $a \ge 0$.  In particular, it is easily seen that $\nabla^2 V(0)$ is not positive definite if $a \ge 1$, and therefore, $V$ is not globally strongly convex.  Nonetheless, $V$ satisfies  \begin{align*}
& \langle \nabla V(x) - \nabla V(y) , x - y \rangle - \frac{1}{2} \norm{x-y}^2 - \frac{1}{2} \norm{\nabla V(x) - \nabla V(y)}^2 \ = \ - \frac{1}{2} \norm{ \nabla V(x) - \nabla V(y) - (x-y)}^2 \\
& \qquad \ = \  - \frac{a^2}{2} \norm{  e^{-\frac{a}{2} \norm{x}^2} x  - e^{-\frac{a}{2} \norm{y}^2} y  }^2 \ \ge \  - a^2  e^{-\frac{a}{2}\norm{x}^2} \norm{ x }^2  - a^2  e^{-\frac{a}{2} \norm{y}^2} \norm{y  }^2 \ \ge \ - \frac{4 a}{e} \;.
\end{align*} Therefore, for all $x,y \in \mathbb{R}^d$ such that $\norm{x - y} \ge \mathcal{R} = 4 \sqrt{a/e}$, \ref{ass_Vconv} holds with $K = 1/4$ and $L_2 = 2$.  \end{example}

\begin{remark}
For the main results that follow, it is possible to replace the gradients $\nabla V$ and $\nabla_1 W$ in \eqref{eq:hamdyn_nonl}, \eqref{eq:hamdyn_exact}, and \eqref{eq:hamdyn_meanf_num} by more general drifts $b:\mathbb{R}^d\to\mathbb{R}^d$ and $\tilde{b}:\mathbb{R}^{2d}\to\mathbb{R}^d$, satisfying assumptions analogous to \Cref{ass}, but not necessarily derivable from a potential.  However, for forces not derivable from a potential, a possibly nonlinear formula for the density of the corresponding invariant measures may no longer be available.
\end{remark}

\subsection{Couplings \& Metrics}
To prove contractivity of nHMC, our strategy is to use a coupling of $ \bar{\pi}_{\bar{\mu}}(\bar{x},\cdot)$ and $\bar{\pi}_{\bar{\mu}'}(\bar{x}',\cdot)$ 
 based on coupling their underlying random initial velocities  following \cite[\S 2.3]{BoEbZi2020}; in particular, the coupling transition step is given by 
\begin{align} \label{eq:coupl_transstep}
\bar{\mathbf{X}}^{\bar{\mu}}(\bar{x},\bar{x}') \ = \ \bar{q}_T(\bar{x},\xi,\bar{\mu}) \qquad \text{and} \qquad \bar{\mathbf{X}}^{\bar{\mu}'}(\bar{x},\bar{x}') \ = \ \bar{q}_T(\bar{x}',\eta,\bar{\mu}'),
\end{align}
where $\xi$ and $\eta$ are defined on the same probability space such that $\xi\sim \mathcal{N}(0,I_d)$ and $\eta$ is defined in cases by: if $|\bar{x}-\bar{x}'|\geq \tilde{R}$ for a constant $\tilde{R}$ which is specified later, then $\eta \ = \  \xi$; and otherwise,  
\begin{align*}
\eta \ = \  \begin{cases} \xi+\gamma \bar{z} & \text{if } \mathcal{U}\leq \dfrac{\varphi_{0,1}(e\cdot\xi+\gamma|\bar{z}|)}{\varphi_{0,1}(e\cdot\xi)} , \\
\xi-2(e\cdot\xi)e & \text{else,} \end{cases}
\end{align*}
where $\mathcal{U}\sim \mathrm{Unif}[0,1]$ is independent of $\xi$, $\varphi_{0,1}$ denotes the density of the standard normal distribution, $\bar{z}=\bar{x}-\bar{x}'$, and $e=\bar{z}/|\bar{z}|$ if $|\bar{z}|\neq 0$ and otherwise $e$ is an arbitrary unit vector.
Here the coupling parameter $\gamma>0$ is given by
\begin{align}
\gamma \ := \ \min(T^{-1},\tilde{R}^{-1}/4). \label{eq:gamma}
\end{align}
By \cite[\S 2.3.2]{BoEbZi2020}, the law of $\eta$ is $\mathcal{N}(0,I_d)$, and hence, \eqref{eq:coupl_transstep} is indeed a coupling of $ \bar{\pi}_{\bar{\mu}}(\bar{x},\cdot)$ and $\bar{\pi}_{\bar{\mu}'}(\bar{x}',\cdot)$.


Contractivity of the nHMC kernel is proved in terms of an $L^1$-Wasserstein distance w.r.t.~a particular distance  which involves a metric-preserving function; cf. \cite[\S 2.5.2]{BoEbZi2020}.  In particular, we consider the distance $\rho:\mathbb{R}^d\times\mathbb{R}^d\to\mathbb{R}_+$ given by
\begin{align} \label{eq:rho}
\rho(x,y) \ = \ f(|x-y|),
\end{align}
where $f:\mathbb{R}_+\to\mathbb{R}_+$ is the following metric-preserving function  
\begin{align} \label{eq:f}
f(r) \ &= \  \int_0^r \exp(-\min(R_1,s)/T)\rmd s \quad \text{where} \\
\tilde{R} \ &:= \ \sqrt{(1/6)(2L+K)/K}\mathcal{R} 
\qquad  \text{and} \qquad R_1 \ := \ (5/4)(\tilde{R}+2T).  \label{eq:R1}
\end{align}
Note that $\mathcal{R}$, $K$ and $L$ are given in \Cref{ass}; and the  distance $\rho$ is equivalent to the Euclidean distance  on $\mathbb{R}^d$ since
\begin{align} \label{eq:distance_equiv}
rf'(R_1) \ \leq \  f(r) \ \leq \  r.
\end{align} To prove the existence of a propagation of chaos phenomenon, we use the distance $\rho_N:\mathbb{R}^{Nd}\times\mathbb{R}^{Nd}\to[0,\infty)$ given by
\begin{align} \label{eq:rho^N}
\rho_N(x,y) \ = \ \frac{1}{N}\sum_{i=1}^N f(|x^i-y^i|),
\end{align}
where $f$ is the metric-preserving function given in \eqref{eq:f}.
By \eqref{eq:distance_equiv}, $\rho_N$ is equivalent to the averaged $\ell^1$-distance
\begin{align} \label{eq:ell^1}
\bar{\ell}_1^N(x,y) \ = \ \frac{1}{N}\sum_{i=1}^N |x^i-y^i|.
\end{align}

\subsection{Convergence of Nonlinear HMC}

\label{sec:convnhmc}

Remarkably, the nHMC transition kernel is a contraction in a particular Wasserstein distance provided both the duration parameter $T$ and the interaction parameter $\epsilon$  are suitably small as indicated in the next theorem.  To precisely state this result, for a metric $\mathsf{d}$  on $\mathbb{R}^d$, and
for probability measures $\nu,\eta \in \mathcal{P}(\mathbb{R}^d)$,
define the $L^1$-Wasserstein distance w.r.t.~$\mathsf{d}$ by
\[
\W_{\mathsf{d}}^1(\nu,\eta) \ := \ \inf \Big\{ \mathbb{E}\left[\mathsf{d}(X,Y)  \right] ~:~ \law(X, Y) \in \J(\nu,\eta) \Big\} \;.
\] When $\mathsf{d}$ is the standard Euclidean metric,  the corresponding $L^1$-Wasserstein distance is simply written as $\W^1$.  

\begin{theorem}[Contraction for nHMC] \label{thm:contr_nonlinear}
Let $\bar{\mu}$ and $\bar{\mu}'$ be two probability measures on $\mathbb{R}^d$ with finite second moment.
Suppose \Cref{ass} holds. Assume $T \in (0,\infty)$ and $\epsilon\in[0,\infty)$ satisfy
\begin{align}
LT^2 \ &\le \ \frac{3}{5}\min\Big(\frac{1}{4},\frac{3}{256\cdot 5 L\tilde{R}^2}\Big)  \label{eq:cond_T}
\\  \epsilon \tilde{L} \ &\le \  \frac{5}{64} \frac{K}{\sqrt{(7/6)+3/(2KT^2)}}\exp\left(-\frac{5\tilde{R}}{2T}-5\right) .\label{eq:cond_eps} 
\end{align} 
Let $\rho$ be the distance defined in \eqref{eq:rho}.  Then
\begin{align} \label{eq:contraction}
& \mathbb{E}_{x\sim \bar{\mu},x'\sim \bar{\mu}'} [\rho(\bar{\mathbf{X}}^{\bar{\mu}}(x,x'),\bar{\mathbf{X}}'^{\bar{\mu}'}(x,x'))] \ \leq \  (1-c) \, \mathbb{E}_{x\sim \bar{\mu},x'\sim \bar{\mu}'}[\rho(x,x')]  \quad \text{where}  \\
 \label{eq:c}
& c \ = \ (KT^2/156)\exp(-5\tilde{R}/4T).
\end{align}
\end{theorem}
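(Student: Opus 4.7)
The plan is to extend the coupling methodology of \cite{BoEbZi2020} for linear HMC to the nonlinear setting by treating the mean-field interaction as a perturbation of the $\epsilon=0$ Hamiltonian flow. Using the coupling \eqref{eq:coupl_transstep}, let $z_t=\bar q_t(\bar x,\xi,\bar\mu)-\bar q_t(\bar x',\eta,\bar\mu')$ and $w_t=\bar p_t(\bar x,\xi,\bar\mu)-\bar p_t(\bar x',\eta,\bar\mu')$ denote the position/momentum differences. Then $\dot z_t=w_t$ and $\dot w_t=-\Delta_t$ with
\[
\Delta_t \ = \ \nabla V(\bar q_t)-\nabla V(\bar q_t') \ +\ \epsilon\Big[\textstyle\int\nabla_1 W(\bar q_t,u)\,\bar\mu_t(\rmd u)-\int \nabla_1 W(\bar q_t',u)\,\bar\mu_t'(\rmd u)\Big].
\]
I would split the bracketed interaction via $\pm\int\nabla_1 W(\bar q_t',u)\,\bar\mu_t(\rmd u)$, bounding the first piece pathwise by $\tilde L|z_t|$ via \Cref{ass}\ref{ass_Wlip}, and the second piece by $\tilde L\,\W^1(\bar\mu_t,\bar\mu_t')$ using the Lipschitzness of $\nabla_1 W$ in its second argument.

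Combining this first piece with $\nabla V(\bar q_t)-\nabla V(\bar q_t')$, I would run the two-regime coupling analysis of \cite[\S 2]{BoEbZi2020}. In the far regime $|\bar x-\bar x'|\ge\tilde R$ one has $\eta=\xi$, so $w_0=0$; together with asymptotic co-coercivity \Cref{ass}\ref{ass_Vconv} and the smallness of $LT^2$ from \eqref{eq:cond_T}, an energy estimate on the Hamiltonian ODE yields a pathwise contraction $|z_T|\le (1-c_1)|z_0|$ with $c_1\sim KT^2$, which passes to $f(|z_T|)\le(1-c_1)f(|z_0|)$ by concavity of $f$. In the near regime $|\bar x-\bar x'|<\tilde R$, the reflection/synchronous mixture produces $\eta-\xi\in\{0,\gamma\bar z\}$ with probabilities controlled by a Gaussian density ratio; the choice of $\gamma$ in \eqref{eq:gamma} and the exponential weight built into $f$ then yield $\mathbb E[f(|z_T|)]\le(1-c_0)f(|z_0|)$ with $c_0\sim (KT^2)\exp(-5\tilde R/(4T))$, the source of the exponential factor in \eqref{eq:c}.

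The genuinely new step is controlling the mean-field perturbation. A Duhamel/Gr\"onwall estimate on $[0,T]$ bounds the additional error at time $T$ by $C\epsilon\tilde L T^2\bigl(|z_0|+\sup_{t\le T}\W^1(\bar\mu_t,\bar\mu_t')\bigr)$, where the factor $\sqrt{(7/6)+3/(2KT^2)}$ appearing in \eqref{eq:cond_eps} comes from a Young-type decomposition of the initial-velocity contribution to $|z_t|$. To close the loop I would use self-consistency: since $\bar\mu_t,\bar\mu_t'$ are the laws of $\bar q_t$ started from $\bar\mu,\bar\mu'$ with independent Gaussian velocities, one has
\[
\W^1(\bar\mu_t,\bar\mu_t') \ \le \ \mathbb E\bigl[|\bar q_t(\tilde x,\tilde\xi,\bar\mu)-\bar q_t(\tilde x',\tilde\xi,\bar\mu')|\bigr]
\]
under any coupling of $(\tilde x,\tilde x')\sim(\bar\mu,\bar\mu')$, and the right-hand side is itself bounded by the same Duhamel estimate. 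Taking $\mathbb E_{x\sim\bar\mu,\,x'\sim\bar\mu'}$ on both sides and rearranging produces a fixed-point inequality; the bound \eqref{eq:cond_eps} is exactly the smallness required to absorb the $\epsilon\tilde L$ perturbation and land at the stated contraction rate $c$ of \eqref{eq:c}.

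I expect the main obstacle to be propagating the metric-preserving function $f$ through the nonlinear term without losing the delicate exponential factor that determines $c$. The exponent $5\tilde R/(2T)$ in \eqref{eq:cond_eps} — twice the rate's exponent — indicates that the perturbation bound must be compared to $f'(R_1)\sim\exp(-R_1/T)$ rather than to $f$, and that the self-consistency step effectively squares this small factor. Beyond that bookkeeping and the careful choice of constants, the rest is a direct adaptation of \cite[\S 2]{BoEbZi2020}.
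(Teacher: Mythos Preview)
Your plan is correct and matches the paper's approach: the two-regime coupling analysis from \cite{BoEbZi2020} with the mean-field interaction handled as an additive $O(\epsilon\tilde L)$ perturbation, closed self-consistently against the initial expected distance. Two minor corrections: in the near regime the coupling actually produces $\eta-\xi\in\{\gamma\bar z,\,-2(e\cdot\xi)e\}$ (the reflection outcome is essential and drives terms $\rn{2}$ and $\rn{3}$ of the paper's near-regime split, so it cannot be dropped from your description); and the paper does not detour through $\W^1(\bar\mu_t,\bar\mu_t')$ via a separate synchronous-velocity coupling but instead bounds $\max_{s\le T}\mathbb E[|\bar z_s|]$ directly under the \emph{same} mixed coupling being analyzed (see \eqref{eq:expec_z_s} and \eqref{eq:lem_nonlin9}), which keeps all expectations over a single coupling and makes the final absorption via \eqref{eq:estimate_f} cleaner.
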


\begin{proof}
The proof is postponed to \Cref{sec:convnhmc_proofs}.
\end{proof}

As straightforward consequences of \Cref{thm:contr_nonlinear}, we obtain uniqueness of an invariant  measure and exponential $\W^1$-convergence.  To simplify notation, we write $\mu \bar{\pi}=\mu \bar{\pi}_\mu$ for $\mu\in\mathcal{P}(\mathbb{R}^d)$, and denote the distribution of the nHMC chain after $m\in\mathbb{N}$ steps by $\mu \bar{\pi}^m$.
For alternative methods to prove existence/uniqueness of an invariant measure under possibly more mild conditions, we highlight \cite{hairer2011asymptotic,butkovsky2014ergodic,eberle2019quantitative,bao2022existence}.  
Without the weak interaction condition in \eqref{eq:cond_eps},  we stress that nHMC may admit multiple invariant measures and exhibit phase transitions, as in \cite{dawson1983critical, HeTu10,gomes2018mean, gomes2020mean, pavliotis2023}.


\begin{Cor}[Uniqueness of invariant nonlinear measure] \label{cor:invmeas}
In the situation of \Cref{thm:contr_nonlinear}, it holds for $m\in\mathbb{N}$, \begin{equation} \label{eq:W_contraction}
\begin{aligned}
 \W^1_{\rho}(\bar{\mu}\bar{\pi}^m ,\bar{\mu}'\bar{\pi}^m)  \  \leq \ e^{-cm}\W^1_{\rho}(\bar{\mu}, \bar{\mu}') \qquad \text{and} \qquad
 \W^1(\bar{\mu}\bar{\pi}^m ,\bar{\mu}'\bar{\pi}^m) \  \leq \  \mathbf{A} e^{-cm}\W^1(\bar{\mu}, \bar{\mu}'),
\end{aligned}
\end{equation}
where $c$ is given in \eqref{eq:c} and 
\begin{align} \label{eq:M_1}
\mathbf{A} \ = \ f'(R_1)^{-1} \ = \ \exp((5/4)(\tilde{R}/T+2)).
\end{align}	
Moreover, $\bar{\mu}_*$ is the unique invariant (nonlinear) measure for $\bar{\pi}$, i.e., $\bar{\mu}_*\bar{\pi} \ = \ \bar{\mu}_*$.
\end{Cor}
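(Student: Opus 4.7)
The plan is to iterate the one-step contraction of \Cref{thm:contr_nonlinear}, then transfer the conclusion to the standard Wasserstein distance using the equivalence \eqref{eq:distance_equiv}, and finally deduce uniqueness by comparing any invariant measure with $\bar{\mu}_*$.

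For the first inequality in \eqref{eq:W_contraction}, I would start from an optimal coupling $(X_0,X_0')$ of $\bar\mu$ and $\bar\mu'$ for $\W^1_\rho$ and build recursively a coupling of $\bar\mu\bar\pi^m$ and $\bar\mu'\bar\pi^m$: given $(X_k,X_k')$ with marginals $\bar\mu_k := \bar\mu\bar\pi^k$ and $\bar\mu_k' := \bar\mu'\bar\pi^k$, define $(X_{k+1},X_{k+1}') = (\bar{\mathbf{X}}^{\bar\mu_k}(X_k,X_k'),\bar{\mathbf{X}}^{\bar\mu_k'}(X_k,X_k'))$ using the coupled velocity refreshment \eqref{eq:coupl_transstep}. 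Applying \eqref{eq:contraction} conditionally on $(X_k,X_k')$ with the measures $(\bar\mu_k,\bar\mu_k')$ gives $\mathbb{E}[\rho(X_{k+1},X_{k+1}')]\le (1-c)\mathbb{E}[\rho(X_k,X_k')]$, so iterating $m$ times and using $(1-c)^m\le e^{-cm}$ yields $\W^1_\rho(\bar\mu\bar\pi^m,\bar\mu'\bar\pi^m)\le e^{-cm}\W^1_\rho(\bar\mu,\bar\mu')$. To legitimately apply \Cref{thm:contr_nonlinear} at every step, one needs $\bar\mu_k,\bar\mu_k'$ to have finite second moment; this propagates along the chain via a Grönwall-type estimate on the Hamiltonian flow \eqref{eq:hamdyn_nonl}, using that $\nabla V$ and $\nabla_1 W$ grow at most linearly (by \Cref{ass}~\ref{ass_Vlip}, \ref{ass_Wlip}) and that the refreshment velocity is standard Gaussian.

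The second inequality in \eqref{eq:W_contraction} follows directly from \eqref{eq:distance_equiv}: taking infimum over couplings gives $f'(R_1)\W^1\le \W^1_\rho\le \W^1$, and substituting into the $\rho$-bound together with $f'(R_1)=\exp(-R_1/T)=\exp(-(5/4)(\tilde R/T+2))$ (from the definition of $f$ in \eqref{eq:f} and \eqref{eq:R1}) yields the constant $\mathbf{A}$ in \eqref{eq:M_1}. For the uniqueness claim, \Cref{prop:inv_meas} already provides the invariance $\bar\mu_*\bar\pi_{\bar\mu_*}=\bar\mu_*$. For any other invariant probability measure $\nu$ with finite first moment, choosing $\bar\mu=\bar\mu_*$ and $\bar\mu'=\nu$ in the second bound of \eqref{eq:W_contraction} gives $\W^1(\bar\mu_*,\nu)=\W^1(\bar\mu_*\bar\pi^m,\nu\bar\pi^m)\le \mathbf{A} e^{-cm}\W^1(\bar\mu_*,\nu)$ for every $m\in\mathbb{N}$; letting $m\to\infty$ forces $\W^1(\bar\mu_*,\nu)=0$, so $\nu=\bar\mu_*$. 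The main bookkeeping obstacle I anticipate is verifying that the finite-second-moment hypothesis of \Cref{thm:contr_nonlinear} indeed propagates along the iteration (and, for uniqueness, that any invariant measure has a finite first moment, which follows from the asymptotic strong co-coercivity in \Cref{ass}~\ref{ass_Vconv} combined with the weak-interaction bound \eqref{eq:cond_eps}); the rest is a mechanical consequence of \eqref{eq:contraction}.
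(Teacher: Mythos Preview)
Your proposal is correct and matches the paper's approach for the contraction inequalities: iterate \eqref{eq:contraction}, take the infimum over couplings, use $(1-c)^m\le e^{-cm}$, and transfer to the standard $\W^1$ via \eqref{eq:distance_equiv}. Your observation about second-moment propagation is apt and is precisely what \Cref{lem:secondmoment} supplies, though the paper's proof does not cite it explicitly.

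For uniqueness you take a slightly different but equally valid route. The paper invokes the Banach fixed-point theorem for the map $\mathcal{T}:\bar\mu\mapsto\bar\mu\bar\pi$ on $(\mathcal{P}(\mathbb{R}^d),\W^1)$, noting that $\mathcal{T}^m$ is a strict contraction for large $m$, hence has a unique fixed point, which by \Cref{prop:inv_meas} must be $\bar\mu_*$. Your argument instead compares any candidate invariant measure $\nu$ directly to $\bar\mu_*$ and lets $m\to\infty$. Both arguments establish uniqueness only within the class of measures with the requisite finite moments (your caveat about this is well placed); the Banach formulation makes that restriction implicit in the choice of metric space, while your direct comparison needs it as an explicit hypothesis on $\nu$.
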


\begin{proof}
	The proof is postponed to \Cref{sec:convnhmc_proofs}.
\end{proof}



When $V$ is $K$-strongly convex, i.e., $\mathcal{R}=0$ in \ref{ass_Vconv}, contractivity of the nHMC kernel holds w.r.t.~the standard $\W^1$ distance under improved conditions, which yield the optimal convergence rate for exact (or ideal) HMC; cf.~ \cite[Thm.~1.3]{chen2022optimal}.

\begin{Cor}[Contractivity of nHMC under global strong convexity]\label{cor:strong_cvx}
    Let $\bar{\mu}$ and $\bar{\mu}'$ be two probability measures on $\mathbb{R}^d$ with finite second moment. Suppose \Cref{ass} holds with $\mathcal{R}=0$ in \ref{ass_Vconv}. Let $T\in(0,\infty)$ and $\epsilon\in[0,\infty)$ satisfy
    \begin{align}
        & LT^2  \ \le \  3/20 \;, \quad \text{and} \quad 
      \epsilon \tilde{L} \ \le \  (K/15) (7/6+3/(2KT^2))^{-1/2} \;. 
      \label{eq:condTepsi_strongconv}
    \end{align}
    Then 
    \begin{align}
        &\W^1(\bar{\mu}\bar{\pi}^m, \bar{\mu}'\bar{\pi}^m)
        \ \le \ e^{-cm} \W^1(\bar{\mu},\bar{\mu}') \qquad \text{where} \qquad c=(1/8) K T^2.  \label{eq:contr_strongconvW1}
    \end{align}
        Moreover, there exists a unique invariant (nonlinear) measure $\bar{\mu}_*$ for $\bar{\pi}$, i.e., $\bar{\mu}_*\bar{\pi}=\bar{\mu}_*$.
\end{Cor}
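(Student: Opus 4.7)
The plan is to adapt the coupling argument of \Cref{thm:contr_nonlinear} to the globally strongly convex setting $\mathcal{R}=0$, where the calculations simplify enough that one can work directly with the Euclidean $\W^1$ distance instead of the distance $\rho$ from \eqref{eq:rho}. Since $\tilde R=0$, the mixture coupling in \eqref{eq:coupl_transstep} degenerates to the synchronous coupling $\eta=\xi$ almost surely, so two nHMC trajectories started from any coupled pair of initial positions share the same initial velocity, and the initial velocity difference vanishes identically.

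Concretely, I would take an optimal $\W^1$-coupling $(x,x')$ of $(\bar\mu,\bar\mu')$, sample $\xi\sim\mathcal{N}(0,I_d)$ independently, and set $z_t := \bar q_t(x,\xi,\bar\mu) - \bar q_t(x',\xi,\bar\mu')$. Because $\dot z_0=0$, double-integrating \eqref{eq:hamdyn_nonl} yields
\[
z_T = z_0 - \int_0^T (T-s)\Bigl[\bigl(\nabla V(\bar q_s)-\nabla V(\bar q'_s)\bigr) + \epsilon A_s\Bigr]\,\rmd s,
\]
where $A_s$ is the difference of the mean-field drifts. I would then decompose $A_s=A_s^{(1)}+A_s^{(2)}$ into a ``same-measure'' piece $A_s^{(1)}=\int[\nabla_1 W(\bar q_s,u)-\nabla_1 W(\bar q'_s,u)]\bar\mu_s(\rmd u)$, bounded in norm by $\tilde L|z_s|$, and a ``same-point'' piece $A_s^{(2)}=\int\nabla_1 W(\bar q'_s,u)[\bar\mu_s-\bar\mu'_s](\rmd u)$, bounded by $\tilde L\,\W^1(\bar\mu_s,\bar\mu'_s)\le \tilde L\,\mathbb{E}|z_s|$, since the coupled trajectory provides a coupling of $\bar\mu_s$ and $\bar\mu'_s$.

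The main computation is then to bound $\mathbb{E}|z_T|$ by invoking the strong-convexity inequality $\langle z_s,\nabla V(\bar q_s)-\nabla V(\bar q'_s)\rangle\ge K|z_s|^2$ (which holds globally when $\mathcal{R}=0$), paired with a Gronwall-type a priori estimate $|z_s|\le(1+O(LT^2))|z_0|$ plus small $\epsilon$-dependent perturbations; the bound $LT^2\le 3/20$ in \eqref{eq:condTepsi_strongconv} underwrites such a preliminary estimate. The interaction-strength condition $\epsilon\tilde L\le (K/15)(7/6+3/(2KT^2))^{-1/2}$ is then calibrated to absorb the $\epsilon A_s$ perturbation, leaving a net one-step contraction $\mathbb{E}|z_T|\le(1-c)\mathbb{E}|z_0|$ with $c=KT^2/8$. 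Optimizing over the initial coupling gives $\W^1(\bar\mu\bar\pi,\bar\mu'\bar\pi)\le e^{-c}\W^1(\bar\mu,\bar\mu')$, and iterating yields \eqref{eq:contr_strongconvW1}. Uniqueness of the invariant nonlinear measure follows from Banach's fixed-point theorem on $(\mathcal{P}_1(\mathbb{R}^d),\W^1)$, with $\bar\mu_*$ being a fixed point of $\bar\pi$ by \Cref{prop:inv_meas}.

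The main obstacle I anticipate is closing the self-referential estimate involving $\W^1(\bar\mu_s,\bar\mu'_s)$: this quantity must be controlled by $\mathbb{E}|z_s|$ via the coupling, but $|z_s|$ is itself bounded only through a Gronwall inequality whose forcing includes $\W^1(\bar\mu_s,\bar\mu'_s)$. The precise quantitative form of the constraint on $\epsilon\tilde L$ in \eqref{eq:condTepsi_strongconv} is exactly what is needed to close this self-referential loop and to ensure that the strong-convexity gain $KT^2/8$ is not overwhelmed by the mean-field perturbation.
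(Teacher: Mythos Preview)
Your proposal is correct and follows essentially the same approach as the paper: synchronous coupling (since $\tilde R=0$ when $\mathcal{R}=0$), contraction from the convexity assumption via an energy estimate, closing the self-referential $\mathbb{E}|z_s|$ loop with an a priori bound, and Banach's fixed-point theorem for uniqueness. The paper executes this more compactly by invoking \Cref{lem:convexarea} with $\hat C=0$ as a black box, yielding $|z_T|^2\le(1-\tfrac{5}{12}KT^2)|z_0|^2+\epsilon^2\tilde L^2T^4(\tfrac{7}{6}+\tfrac{3}{2KT^2})\max_{s\le T}\mathbb{E}[|\bar z_s|]^2$, and then using \eqref{eq:lem_nonlin9} (with $\bar w=0$) to get $\max_{s\le T}\mathbb{E}[|\bar z_s|]\le(5/4)\mathbb{E}[|z_0|]$---so your anticipated ``obstacle'' is resolved in a single line, and the constant in \eqref{eq:condTepsi_strongconv} is calibrated exactly so that $(5/24)KT^2-(5/4)\epsilon\tilde L T^2\sqrt{7/6+3/(2KT^2)}\ge(1/8)KT^2$.

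One point worth flagging: you invoke only the strong-convexity inequality $\langle z_s,\beta_s\rangle\ge K|z_s|^2$, but the proof of \Cref{lem:convexarea} actually uses the full strong \emph{co-coercivity} of \ref{ass_Vconv} (the additional $(1/L)\|\beta_s\|^2$ term) to absorb the $\|\beta_s\|^2$ contributions that appear when bounding $|\bar w_s|^2$ in the variation-of-parameters computation for $|z_t|^2$; under the condition $LT^2\le 1/4$ these terms cancel. If you carry out the computation inline as you propose, be sure to use co-coercivity at that step, otherwise you would need a more restrictive bound on $LT^2$.
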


\begin{proof}
	The proof is postponed to \Cref{sec:convnhmc_proofs}.
\end{proof}

By adapting \cite[Theorem 3.1]{BouRabeeSchuh2023}, it is  possible to extend \Cref{thm:contr_nonlinear} to an unadjusted version of nHMC  where the exact nonlinear Hamiltonian flow is time discretized. However, since this numerical flow is not in general implementable,
 we instead consider a particle approximation of the nonlinear process, as in  \cite[\S 4.2]{monmarche2023elementary} and \cite{malrieu2003convergence,cattiaux2008probabilistic,dos2022simulation,kumar2022well}.

\subsection{Propagation of Chaos Phenomenon} \label{sec:propofchaos}

By using a variant of Sznitman's synchronous coupling \cite{Sz91}, we prove propagation of chaos for a single xHMC step.

\begin{proposition}[Propagation of chaos for a single xHMC step] \label{thm:propofchaos_onestep}
Let $\bar{\mu}$ be a probability measure on $\mathbb{R}^d$ with finite second moment. Suppose that \Cref{ass} holds. Assume $(L+2\epsilon\tilde{L}) T^2\le 1$. Let $(\bar{\mathbf{X}}^{\bar{\mu},i})_{i=1}^N$ be $N$ independent copies of one nHMC step given by \eqref{eq:hamdyn_nonl} with initial distribution $\bar{\mu}$. Let  $(\mathbf{X}^i)_{i=1}^N$ be one xHMC step applied to the corresponding mean-field particle system given by \eqref{eq:hamdyn_meanf_num} with initial distribution $\bar{\mu}^{\otimes N}$. Then
\begin{align*}
\mathbb{E}_{x\sim \bar{\mu}^{\otimes N}}\Big[\frac{1}{N}\sum_{i=1}^N|\bar{\mathbf{X}}^{\bar{\mu},i}(x^i)-\mathbf{X}^i(x)|\Big] \ \le \ N^{-1/2} \mathbf{B}
\end{align*}
with $\mathbf{B}=4T^2\epsilon \tilde{L}\mathbf{B}_1^{1/2}$, where $\mathbf{B}_1$ is the uniform second moment bound of nHMC which is given in \Cref{lem:secondmoment} and which depends on the second moment of the initial distribution $\bar{\mu}$, $K$, $\epsilon$, $L$, $\mathcal{R}$, $T$ and $\mathbf{W}_0$.
\end{proposition}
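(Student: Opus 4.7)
The natural approach is a Sznitman-style synchronous coupling: on the same probability space, draw initial positions $(x^i)_{i=1}^N\sim\bar{\mu}^{\otimes N}$ and a single i.i.d.\ sequence of velocities $(\xi^i)_{i=1}^N\sim\mathcal{N}(0,I_d)^{\otimes N}$, then use these simultaneously to drive the $N$ independent nHMC copies $\bar{q}^i_t:=\bar{q}_t(x^i,\xi^i,\bar{\mu})$ and the $N$-particle xHMC trajectory $q^i_t:=q^i_t(x,\xi)$. Let $\Delta^i_t:=\bar{q}^i_t-q^i_t$; by construction $\Delta^i_0=0$ and $\dot{\Delta}^i_0=0$, so integrating twice and using Fubini yields $\Delta^i_t=\int_0^t(t-s)\ddot{\Delta}^i_s\,\rmd s$. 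The plan is to control $\|\Delta^1_T\|_{L^2(\mathbb{P})}$ using this identity together with exchangeability.

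From \eqref{eq:hamdyn_nonl}, \eqref{eq:hamdyn_exact}, and the symmetry of $W$ in \Cref{ass}\ref{ass_Wlip},
\begin{align*}
\ddot{\Delta}^i_t \;=\; -\bigl(\nabla V(\bar{q}^i_t)-\nabla V(q^i_t)\bigr)-\epsilon\Bigl(\int\nabla_1 W(\bar{q}^i_t,u)\,\bar{\mu}_t(\rmd u)-\frac{1}{N}\sum_{j=1}^N\nabla_1 W(q^i_t,q^j_t)\Bigr).
\end{align*}
Inserting the intermediate quantity $N^{-1}\sum_j\nabla_1 W(\bar{q}^i_t,\bar{q}^j_t)$ splits the interaction term into a \emph{Lipschitz piece}, bounded by $\tilde{L}(|\Delta^i_t|+N^{-1}\sum_j|\Delta^j_t|)$ thanks to \ref{ass_Wlip}, and a \emph{Monte Carlo piece}
\[
I^{i,(1)}_t\;:=\;\int\nabla_1 W(\bar{q}^i_t,u)\,\bar{\mu}_t(\rmd u)-\frac{1}{N}\sum_{j=1}^N\nabla_1 W(\bar{q}^i_t,\bar{q}^j_t).
\]
Conditionally on $\bar{q}^i_t$, the variables $\{\nabla_1 W(\bar{q}^i_t,\bar{q}^j_t)\}_{j\neq i}$ are i.i.d.\ with mean $\int\nabla_1 W(\bar{q}^i_t,u)\,\bar{\mu}_t(\rmd u)$, so $I^{i,(1)}_t$ decomposes as a conditionally centered sum of $N-1$ terms of variance $O(1/N)$ plus a boundary $j=i$ contribution of size $1/N$. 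Combining the $\tilde{L}$-Lipschitz property of $\nabla_1 W$ with the uniform-in-time second moment bound $\mathbf{B}_1$ from \Cref{lem:secondmoment} gives $\mathbb{E}[|I^{i,(1)}_t|^2]\le (\text{const})\,\tilde{L}^2\mathbf{B}_1/N$.

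Taking $L^2(\mathbb{P})$-norms in $\Delta^i_t=\int_0^t(t-s)\ddot{\Delta}^i_s\,\rmd s$, using the $L$-Lipschitz bound for $\nabla V$, and invoking exchangeability (so that $E_t:=\|\Delta^1_t\|_{L^2}=\|\Delta^i_t\|_{L^2}$ and $\|N^{-1}\sum_j|\Delta^j_t|\|_{L^2}\le E_t$), I obtain an integral inequality of the form
\[
E_t\;\le\;(L+2\epsilon\tilde{L})\int_0^t(t-s)E_s\,\rmd s\;+\;\frac{\epsilon\tilde{L}\,\mathbf{B}_1^{1/2}\,t^2}{\sqrt{N}}\cdot(\text{const}),\qquad t\in[0,T].
\]
Under the smallness hypothesis $(L+2\epsilon\tilde{L})T^2\le 1$, a Gronwall iteration on $[0,T]$ closes the bound and produces $E_T\le 4\epsilon\tilde{L}T^2\mathbf{B}_1^{1/2}/\sqrt{N}$. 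Finally, exchangeability and Jensen's inequality give
\[
\mathbb{E}\Bigl[\frac{1}{N}\sum_{i=1}^N|\bar{\mathbf{X}}^{\bar{\mu},i}(x^i)-\mathbf{X}^i(x)|\Bigr]\;=\;\mathbb{E}[|\Delta^1_T|]\;\le\;E_T\;\le\;\frac{\mathbf{B}}{\sqrt{N}}.
\]

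The main technical obstacle is not conceptual but quantitative: tracking constants carefully enough in the Gronwall step so that the final prefactor lands at exactly $\mathbf{B}=4T^2\epsilon\tilde{L}\mathbf{B}_1^{1/2}$, which requires absorbing both the boundary $j=i$ contribution of $I^{i,(1)}_t$ and the $-\nabla V$ contribution into the coefficient afforded by $(L+2\epsilon\tilde{L})T^2\le 1$ without letting an exponential factor appear. Everything else—the $L^1$-for-$L^2$ exchange, the second-moment input $\mathbf{B}_1$, and the Fubini identity—is routine once the synchronous coupling is in place.
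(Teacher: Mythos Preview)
Your proposal is correct and follows essentially the same route as the paper: a Sznitman synchronous coupling, the same splitting of the interaction force into a Lipschitz piece and a Monte-Carlo piece, the same conditional-variance bound $\mathbb{E}[|I^{i,(1)}_t|^2]\le 16\tilde{L}^2\mathbf{B}_1/N$, and the same absorption argument using $(L+2\epsilon\tilde{L})T^2\le 1$ to avoid an exponential Gronwall factor. The only cosmetic difference is that the paper works with $\mathbb{E}\bigl[\sum_i|z^i_s|\bigr]$ in $L^1$ while you work with $\|\Delta^1_t\|_{L^2}$ for a single component via exchangeability; both computations land on the exact constant $\mathbf{B}=4T^2\epsilon\tilde{L}\mathbf{B}_1^{1/2}$.
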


\begin{proof}
The proof is postponed to \Cref{sec:propofchaos_proofs}.
\end{proof}

In turn, by means of a particle-wise coupling \cite{Eb2016A,BouRabeeSchuh2023} for xHMC and
\Cref{thm:propofchaos_onestep}, we prove a uniform-in-steps propagation of chaos bound for xHMC. 

\begin{theorem} [Uniform-in-steps propagation of chaos of xHMC]\label{thm:propofchaos_uniform}
Let $\mu$ and $\bar{\mu}$ be two probability measures on $\mathbb{R}^d$ with finite second moment.
Suppose that \Cref{ass} holds, and $T\in(0,\infty)$, $\epsilon\in[0,\infty)$ satisfy \eqref{eq:cond_T} and \eqref{eq:cond_eps}. Let $\rho_N$ be the distance given in \eqref{eq:rho^N}. Then
\begin{equation}\label{eq:unif_propofchaos_exact}
\begin{aligned} 
& \W_{\rho_N}^1(\mu^{\otimes N}\pi^m , ( \bar{\mu}\bar{\pi}^m)^{\otimes N}) \ \leq \ e^{-cm}\W^1_{\rho}(\mu, \bar{\mu})+c^{-1}\mathbf{B}N^{-1/2},
\\ & \W_{\bar{\ell}_1^N}^1(\mu^{\otimes N}\pi^m , ( \bar{\mu}\bar{\pi}^m)^{\otimes N}) \ \leq \ \mathbf{A} e^{-cm}\W_{1}(\mu, \bar{\mu})+\mathbf{A}c^{-1}\mathbf{B}N^{-1/2},
\end{aligned}
\end{equation}
where  c is given in \eqref{eq:c}, $\mathbf{A}$ is given in \eqref{eq:M_1} and $\mathbf{B}$ is given in \Cref{thm:propofchaos_onestep}. Moreover, 
\begin{equation} \label{eq:unif_propofchaos_exact2}
\begin{aligned} 
& \W_{\rho_N}^1(\bar{\mu}_*^{\otimes N} ,  \mu^{\otimes N}\pi^m) \ \leq \ e^{-cm}\W^1_{\rho}(\bar{\mu}_*, {\mu})+c^{-1}\mathbf{B}N^{-1/2},
\\ & \W_{\bar{\ell}_1^N}^1(\bar{\mu}_*^{\otimes N},\mu^{\otimes N}\pi^m ) \ \leq \  \mathbf{A} e^{-cm}\W_{1}(\bar{\mu}_*, {\mu})+\mathbf{A}c^{-1}\mathbf{B}N^{-1/2}.
\end{aligned}
\end{equation}
\end{theorem}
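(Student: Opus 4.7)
The plan is to construct, on $\mathbb{R}^{Nd}$, a coupling of the xHMC chain $(X^m)_{m\ge 0}$ (with joint law $\mu^{\otimes N}\pi^m$) with $N$ independent copies of nHMC $(\bar{X}^{i,m})_{i=1}^N$ (having iid marginal $\mu_m:=\bar{\mu}\bar{\pi}^m$ at step $m$), and to establish the one-step recursion
\begin{equation*}
e_{m+1}\ \le\ (1-c)\,e_m\ +\ \mathbf{B}\,N^{-1/2},\qquad e_m\ :=\ \mathbb{E}[\rho_N(X^m,\bar{X}^m)].
\end{equation*}
Since the joint law of $(\bar{X}^{i,m})_i$ is $(\bar{\mu}\bar{\pi}^m)^{\otimes N}$, any such coupling provides the bound $\W_{\rho_N}^1(\mu^{\otimes N}\pi^m,(\bar{\mu}\bar{\pi}^m)^{\otimes N})\le e_m$. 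Initialising at step $0$ by an $N$-fold product of a $\W^1_\rho$-optimal coupling of $\mu$ and $\bar{\mu}$ gives $e_0\le \W^1_\rho(\mu,\bar{\mu})$, and iterating (using $\sum_{k\ge 0}(1-c)^k\le 1/c$ and $(1-c)^m\le e^{-cm}$) yields the first inequality in \eqref{eq:unif_propofchaos_exact}.

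For the inductive step, given $(X^m,\bar{X}^m)$, I would sample iid velocities $\xi^{i,m}\sim\mathcal{N}(0,I_d)$ and, for each $i$, construct a companion velocity $\eta^{i,m}$ via the velocity coupling of \eqref{eq:coupl_transstep}--\eqref{eq:gamma} applied to the pair $(X^{i,m},\bar{X}^{i,m})$. Then define $X^{m+1}$ as one xHMC step \eqref{eq:transstep_exact} from $X^m$ driven by $(\xi^{i,m})_i$, and $\bar{X}^{i,m+1}:=\bar{q}_T(\bar{X}^{i,m},\eta^{i,m},\mu_m)$ independently across $i$. The crucial intermediate configuration is
\begin{equation*}
Y^{i,m+1}\ :=\ \bar{q}_T(X^{i,m},\xi^{i,m},\mu_m),\qquad i\in\{1,\dots,N\},
\end{equation*}
namely, $N$ independent nHMC transitions with driving law $\mu_m$ started from the xHMC positions $(X^{i,m})_i$ with the \emph{same} velocities used inside xHMC. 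Since $\rho_N$ is a metric, the triangle inequality yields
\begin{equation*}
\rho_N(X^{m+1},\bar{X}^{m+1})\ \le\ \rho_N(X^{m+1},Y^{m+1})\ +\ \rho_N(Y^{m+1},\bar{X}^{m+1}).
\end{equation*}
For the second summand, each particle pair $(Y^{i,m+1},\bar{X}^{i,m+1})$ is an instance of the nHMC coupling of \Cref{thm:contr_nonlinear} (common driving law $\mu_m$, initial pair $(X^{i,m},\bar{X}^{i,m})$, coupled velocities $(\xi^{i,m},\eta^{i,m})$ by construction), so applying \Cref{thm:contr_nonlinear} particle-wise gives $\mathbb{E}[\rho_N(Y^{m+1},\bar{X}^{m+1})]\le(1-c)\,e_m$.

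The first summand is a one-step propagation-of-chaos comparison of xHMC with $N$ independent nHMC chains sharing the same initial state and velocities, which is exactly the setup of \Cref{thm:propofchaos_onestep}, except that $(X^{i,m})_i$ is not iid; this is where I anticipate the main technical difficulty. To handle it, I would revisit the synchronous Gr\"onwall argument behind \Cref{thm:propofchaos_onestep}: the interaction defect
\begin{equation*}
\frac{1}{N}\sum_{j=1}^N\nabla_1 W(q_t^i,q_t^j)\ -\ \int \nabla_1 W(\bar{q}_t^i,u)\,\bar{\mu}_t(\rmd u)
\end{equation*}
splits into (i) a Lipschitz-in-position piece dominated by $\tilde{L}(|q^i-\bar{q}^i|+N^{-1}\sum_j|q^j-\bar{q}^j|)$ and (ii) a fluctuation piece. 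For (ii), I would substitute the non-iid $\bar{q}_t^j:=\bar{q}_t(X^{j,m},\xi^{j,m},\mu_m)$ by the genuinely iid reference $\hat{q}_t^j:=\bar{q}_t(\bar{X}^{j,m},\xi^{j,m},\mu_m)$, paying $\le\tilde{L}\,e_m$ by continuity of nHMC in the initial condition, and then bound the resulting iid sum by a standard $L^2$ variance computation that uses the uniform second-moment bound $\mathbf{B}_1$ from \Cref{lem:secondmoment} to extract the $N^{-1/2}$ rate. Under the smallness conditions \eqref{eq:cond_T}--\eqref{eq:cond_eps} on $LT^2$ and $\epsilon\tilde{L}$, the Lipschitz contribution is absorbed into the contraction factor, yielding $\mathbb{E}[\rho_N(X^{m+1},Y^{m+1})]\le\mathbf{B}\,N^{-1/2}$ and closing the recursion.

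The $\bar{\ell}_1^N$-bound in \eqref{eq:unif_propofchaos_exact} then follows from the metric equivalence $f'(R_1)\,r\le f(r)\le r$ in \eqref{eq:distance_equiv}, which gives $\W_{\bar{\ell}_1^N}^1\le\mathbf{A}\,\W_{\rho_N}^1$ with $\mathbf{A}=f'(R_1)^{-1}$ as in \eqref{eq:M_1}. Finally, \eqref{eq:unif_propofchaos_exact2} is obtained by specialising \eqref{eq:unif_propofchaos_exact} to $\bar{\mu}=\bar{\mu}_*$: \Cref{prop:inv_meas} gives $\bar{\mu}_*\bar{\pi}^m=\bar{\mu}_*$ for every $m\in\mathbb{N}$, so $(\bar{\mu}_*\bar{\pi}^m)^{\otimes N}=\bar{\mu}_*^{\otimes N}$, and the first inequality becomes directly the stated bound.
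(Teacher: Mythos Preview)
Your forward recursive coupling has a real gap. In your construction, the velocity $\eta^{i,m}$ for the $i$-th nHMC copy is built from $(\xi^{i,m},\mathcal{U}^{i,m},X^{i,m},\bar{X}^{i,m})$; since $X^{i,m}$, as a coordinate of the \emph{interacting} xHMC chain, depends on all particles $(X^{j,0},\xi^{j,0:m-1})_{j=1}^N$, the family $(\eta^{i,m})_{i=1}^N$ is not independent across $i$ once $m\ge 1$. Consequently, while each $\bar{X}^{i,m}$ retains the correct marginal $\mu_m$, the joint law of $(\bar{X}^{1,m},\dots,\bar{X}^{N,m})$ is \emph{not} $\mu_m^{\otimes N}$ for $m\ge 2$. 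This breaks your argument in two places: first, $(X^m,\bar{X}^m)$ is no longer a coupling of $\mu^{\otimes N}\pi^m$ with $(\bar{\mu}\bar{\pi}^m)^{\otimes N}$, so $e_m$ does not upper bound the Wasserstein distance in \eqref{eq:unif_propofchaos_exact}; second, your ``genuinely iid reference'' $\hat{q}_t^j=\bar{q}_t(\bar{X}^{j,m},\xi^{j,m},\mu_m)$ is not iid either, so the $L^2$ variance computation giving the $N^{-1/2}$ rate fails. Any coupling that uses the xHMC state to steer the nHMC velocities will create this correlation.

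The paper sidesteps this by a different decomposition. Instead of a forward pointwise coupling, it works at the Wasserstein level via the telescoping
\[
\W_{\rho_N}^1\big((\bar{\mu}\bar{\pi}^m)^{\otimes N},\mu^{\otimes N}\pi^m\big)\ \le\ \sum_{i=0}^{m-1}\W_{\rho_N}^1\big((\bar{\mu}\bar{\pi}^{m-i})^{\otimes N}\pi^i,(\bar{\mu}\bar{\pi}^{m-i-1})^{\otimes N}\pi^{i+1}\big)\ +\ \W_{\rho_N}^1\big(\bar{\mu}^{\otimes N}\pi^m,\mu^{\otimes N}\pi^m\big),
\]
and applies the $\W_{\rho_N}^1$-contractivity of \emph{xHMC} on $\mathbb{R}^{Nd}$ (\Cref{thm:contr_meanf}, valid for arbitrary laws, not only products) to pull out $(1-c)^i$ from each summand. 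The one-step propagation of chaos (\Cref{thm:propofchaos_onestep}) is then invoked only on $\W_{\rho_N}^1\big((\bar{\mu}\bar{\pi}^{k})^{\otimes N},(\bar{\mu}\bar{\pi}^{k-1})^{\otimes N}\pi\big)$, whose initial distribution $(\bar{\mu}\bar{\pi}^{k-1})^{\otimes N}$ is a genuine product, so the iid fluctuation argument applies directly. The structural point is that contraction is obtained through $\pi$ rather than $\bar{\pi}$, which preserves the product structure of the nHMC reference exactly where it is needed.
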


\begin{proof}
The proof is postponed to \Cref{sec:propofchaos_proofs}.
\end{proof}

  Theorem~\ref{thm:propofchaos_uniform} combined with discretization error bounds provided in \Cref{sec:strong_accuracy} yields a complexity upper bound for uHMC to sample  $\bar{\mu}_*$. 

\begin{theorem} [Complexity of uHMC]\label{thm:propofchaos_uniform_unad}
Let $\mu$ and $\bar{\mu}$ be two probability measures on $\mathbb{R}^d$ with finite second moment.
Suppose \Cref{ass}  holds. Let $T\in(0,\infty)$ satisfy
\begin{align}
	&LT^2 \ \le \ \frac{5}{3}\min\Big(\frac{1}{9},\frac{3}{36^2\cdot 5 L\hat{R}^2}\Big). \label{eq:cond_Th1}
\end{align}
Suppose $\epsilon\in[0,\infty)$ satisfies \eqref{eq:Cepsi}. 
Let $\rho_N$ be the distance defined in \eqref{eq:rho^N}.
Let $h \ge 0$ satisfy $T/h \in \mathbb{Z}$ if $h>0$.  Then
\begin{equation}\label{eq:unif_propofchaos_unadj}
\begin{aligned} 
& \W_{\rho_N}^1(\mu^{\otimes N}{\pi_h}^m , ( \bar{\mu}\bar{\pi}^m)^{\otimes N}) \ \leq \ e^{-cm}\W^1_{\rho}(\mu, \bar{\mu})+c^{-1}\mathbf{B}N^{-1/2}+h^{3/2} \mathbf{C},
\\ & \W_{\bar{\ell}_1^N}^1(\mu^{\otimes N}{\pi_h}^m , ( \bar{\mu}\bar{\pi}^m)^{\otimes N}) \ \leq \ \mathbf{A} e^{-cm}\W^1(\mu, \bar{\mu})+\mathbf{A}c^{-1}\mathbf{B}N^{-1/2}+h^{3/2} \mathbf{C},
\end{aligned}
\end{equation}
where $c$ and $\mathbf{A}$ are given in \eqref{eq:c_meanf} and \eqref{eq:M_1}; $\mathbf{B}$ is given in \Cref{thm:propofchaos_onestep}; and $\mathbf{C} = c^{-1} L^{3/4}\mathbf{B}_3(T\sqrt{d}+\sqrt{\mathbf{B}_2(\mu)}+(\epsilon/K)\mathbf{W}_0)$ where $\mathbf{B}_2(\mu)$ is the uniform second moment bound of xHMC for the mean-field particle model with initial distribution $\mu$ and where $\mathbf{B}_3$ is some numerical constant.
 Moreover, 
\begin{equation} \label{eq:unif_propofchaos_exact2_unadj}
\begin{aligned} 
& \W_{\rho_N}^1(\bar{\mu}_*^{\otimes N} , \mu^{\otimes N}{\pi_h}^m ) \ \leq \  e^{-cm}\W^1_{\rho}(\bar{\mu}_*, {\mu})+c^{-1}\mathbf{B}N^{-1/2}+h^{3/2} \mathbf{C},
\\ & \W_{\bar{\ell}_1^N}^1(\bar{\mu}_*^{\otimes N}, \mu^{\otimes N}{\pi_h}^m ) \ \leq \  \mathbf{A} e^{-cm}\W^1(\bar{\mu}_*, {\mu})+\mathbf{A}c^{-1}\mathbf{B}N^{-1/2}+h^{3/2} \mathbf{C}.
\end{aligned}
\end{equation}
\end{theorem}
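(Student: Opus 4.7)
The plan is to obtain the $\W^1_{\rho_N}$ bound by a triangle inequality through the intermediate measure $\mu^{\otimes N}\pi^m$, splitting the total error into a propagation-of-chaos piece and a strong-discretization piece:
\begin{align*}
\W^1_{\rho_N}\bigl(\mu^{\otimes N}\pi_h^m,(\bar\mu\bar\pi^m)^{\otimes N}\bigr)
\ \le \ \W^1_{\rho_N}\bigl(\mu^{\otimes N}\pi_h^m,\mu^{\otimes N}\pi^m\bigr)
+ \W^1_{\rho_N}\bigl(\mu^{\otimes N}\pi^m,(\bar\mu\bar\pi^m)^{\otimes N}\bigr).
\end{align*}
The second term is exactly the quantity controlled by \Cref{thm:propofchaos_uniform}, producing the bound $e^{-cm}\W^1_\rho(\mu,\bar\mu)+c^{-1}\mathbf{B}N^{-1/2}$; note the contraction rate $c$ appearing here is the rate for xHMC on the $N$-particle mean-field system (from \eqref{eq:c_meanf}), which is why the standing condition \eqref{eq:cond_Th1} is stated in the sharper form appropriate for the particle model rather than in the form \eqref{eq:cond_T}.

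For the first term, the strategy is the standard one-step discretization plus iterated contraction. First I would couple $\mu^{\otimes N}\pi_h^m$ and $\mu^{\otimes N}\pi^m$ synchronously (same initial particle configuration, same velocity refreshments at each transition) and expand via the telescoping identity
\begin{align*}
\W^1_{\rho_N}(\mu^{\otimes N}\pi_h^m,\mu^{\otimes N}\pi^m)
\ \le \ \sum_{k=1}^m \W^1_{\rho_N}\bigl(\mu^{\otimes N}\pi_h^{k}\pi^{m-k},\mu^{\otimes N}\pi_h^{k-1}\pi^{m-k+1}\bigr).
\end{align*}
Each telescoping difference is absorbed by combining the $\rho_N$-contractivity of xHMC on the particle system (coming from the particle-wise coupling of \Cref{sec:propofchaos}) with a single-step $L^2$ strong accuracy estimate for the randomized time integrator, which yields an error of order $h^{3/2} L^{3/4}(T\sqrt d+\sqrt{\mathbf{B}_2(\mu)}+(\epsilon/K)\mathbf{W}_0)$ --- precisely the content of the discretization estimates of \Cref{sec:strong_accuracy}. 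Summing the geometric series $\sum_{k=1}^m (1-c)^{m-k}$ turns into a factor $c^{-1}$, which is why $\mathbf{C}$ has the form $c^{-1}L^{3/4}\mathbf{B}_3(\cdots)$ and no $m$-dependence survives.

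The $\W^1_{\bar\ell_1^N}$ bound is then immediate from the $\rho_N$ bound using the metric equivalence \eqref{eq:distance_equiv}, which gives $\bar\ell_1^N \le f'(R_1)^{-1}\rho_N=\mathbf{A}\,\rho_N$ and $\rho_N\le\bar\ell_1^N$, picking up the factor $\mathbf{A}$ in front of the $e^{-cm}$ and $N^{-1/2}$ terms (not in front of the $h^{3/2}$ term, since the discretization bound can be carried out directly in $\bar\ell_1^N$ before passing to $\rho_N$). Finally, the second pair \eqref{eq:unif_propofchaos_exact2_unadj} is obtained by specializing the first pair with $\bar\mu=\bar\mu_*$ and using the invariance $\bar\mu_*\bar\pi^m=\bar\mu_*$ from \Cref{prop:inv_meas}, which collapses the product measure $(\bar\mu_*\bar\pi^m)^{\otimes N}$ to $\bar\mu_*^{\otimes N}$.

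The main obstacle is the single-step strong accuracy estimate for the randomized midpoint flow on the $N$-particle system: we need $L^2$-order $h^{3/2}$ under only gradient-Lipschitz assumptions on $V$ and $W$ (no Hessian-Lipschitz), and we need the constant to remain dimension-free after dividing by $N$ in the $\bar\ell_1^N/\rho_N$ averaged distance --- this uses that randomization kills the leading deterministic quadrature bias (as in \cite{BouRabeeMarsden2022}) and that the uniform-in-$m$ second moment $\mathbf{B}_2(\mu)$ of xHMC on the particle model (from \Cref{sec:propofchaos_proofs}) is $O(d)$ per particle but enters only through the averaged $\sqrt{\mathbf{B}_2(\mu)}$ scaling. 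Everything else is assembly: triangle inequality, distance equivalence, invariance.
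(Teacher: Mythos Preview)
Your overall architecture matches the paper's: triangle inequality through $\mu^{\otimes N}\pi^m$, invoke \Cref{thm:propofchaos_uniform} for the propagation-of-chaos piece, telescope the discretization piece, sum a geometric series, and specialize to $\bar\mu=\bar\mu_*$ at the end. The one place where you diverge from the paper is the direction of the telescoping, and it matters.

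You telescope through $\mu^{\otimes N}\pi_h^{k}\pi^{m-k}$, so each term compares one step of $\pi_h$ versus $\pi$ starting from $\mu^{\otimes N}\pi_h^{k-1}$, and then absorbs $(m-k)$ applications of $\pi$ by xHMC contractivity. The strong-accuracy estimate of \Cref{sec:strong_accuracy} then needs the second moment of $\mu^{\otimes N}\pi_h^{k-1}$, i.e.\ a uniform-in-steps moment bound for \emph{uHMC}. The paper does not supply one; \Cref{lem:secondmoment_meanf} gives $\mathbf{B}_2(\mu)$ only for xHMC, and that is exactly the constant appearing in the statement of $\mathbf{C}$. So your telescoping, as written, does not produce the stated $\mathbf{C}$ without an extra lemma.

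The paper instead telescopes through $\mu^{\otimes N}\pi^{k}\pi_h^{m-k}$: each term compares one step of $\pi$ versus $\pi_h$ starting from $\mu^{\otimes N}\pi^{k-1}$, and absorbs $(m-k)$ applications of $\pi_h$ via the uHMC contraction of \Cref{thm:contr_uHMC}. Now the strong-accuracy step is always launched from an xHMC state, whose moment is controlled by $\mathbf{B}_2(\mu)$. Reversing your telescoping in this way closes the gap with no further work; everything else in your proposal is correct.
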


\begin{proof}
The proof is postponed to \Cref{sec:propofchaos_proofs}.
\end{proof}

\Cref{thm:propofchaos_uniform_unad} is reminiscent of complexity upper bounds for preconditioned HMC in infinite dimension, in the sense that the perturbed Gaussian measure is analogous to the nonlinear measure \cite{BePiSaSt11,BoEb21,pidstrigach2022convergence}. 

\begin{remark}[Complexity for Strongly Convex Confinement Potentials]
In the specific case of globally strongly convex $V$, uniform-in-steps propagation of chaos of xHMC holds with the same $c$ and under the same assumptions on $T$ and $\epsilon$ as in \Cref{cor:strong_cvx} for nHMC.  Choosing $T \propto \sqrt{L}$ in order to saturate the condition on $T$ in \Cref{cor:strong_cvx}, the number of steps required for  nHMC to attain $\varepsilon$ accuracy in $\W^1$-distance is of order $L/K \log(\Delta(0)/\varepsilon)$ where $\Delta(0) = \W^1(\bar{\mu}_*, {\mu})$. 
To ensure $\varepsilon$ accuracy in $\W^1_{\bar{\ell}_1^N}$-distance of uHMC w.r.t.~$\bar{\mu}_*^{\otimes N}$, Theorem~\ref{thm:propofchaos_uniform_unad} indicates that it is sufficient to choose $h \propto (\mathbf{C}/\varepsilon)^{-2/3}$, $N \propto (\mathbf{B}/(c\varepsilon))^2$ and $m \propto \log(\mathbf{A}\Delta(0)/\varepsilon)/c$. 
Therefore, a sufficient number of gradient evaluations to obtain a $\W^1_{\bar{\ell}_1^N}$-distance smaller than  $\varepsilon$ satisfies
\begin{align*}
    N \times m \times T/h & \ \propto \   \frac{(KT^2)^2\left(\mathbf{B}_4(\mu)+\frac{d}{K}+\frac{\epsilon^2\mathbf{W}_0^2}{K^2}\right)}{(KT^2)^2\varepsilon^2} \times \frac{TL^{1/2}(\mathbf{B}_4(\mu)^{1/3}+(\frac{d}{K})^{1/3}+(\frac{\epsilon\mathbf{W}_0}{K})^{2/3})}{(KT^2\varepsilon)^{2/3}} \times \frac{\log(\Delta(0)/\varepsilon)}{KT^2}
    \\ & \ \propto \  \frac{(L/K)^{5/3}}{\varepsilon^{8/3}}\times \Big(\mathbf{B}_4(\mu)^{4/3} + \Big(\frac{d}{K}\Big)^{4/3} + \Big(\frac{\epsilon\mathbf{W}_0}{K}\Big)^{8/3} \Big)  \times \log\left( \frac{\Delta(0)}{\varepsilon} \right),
\end{align*}
where $\mathbf{B}_4(\mu)=\int_{\mathbb{R}^d}|x|^2\mu(\rmd x)$.  This computational cost reflects the expense of resolving the asymptotic bias due to the particle approximation and motivates alternative approximations to nHMC \cite{GOBET201871,gomes2020mean}; however, these alternatives may require higher regularity conditions than assumed in this paper; cf.~\Cref{ass}. 
\end{remark}

\subsection{Simple Numerical Verification}

This part gives a simple numerical illustration of the theoretical results. Consider the nonlinear target measure $\bar{\mu}_*$ in \eqref{nonlinear_target} with $V(x) = x^2 (1-\epsilon)/2$ and   $W(x,y) = \epsilon ( (x-y)^2 - 1)/2$ for $x,y\in\mathbb{R}$ and $\epsilon >0$.
In this example, the nonlinear probability measure $	\bar{\mu}_*$ in \eqref{nonlinear_target} reduces to  $\mathcal{N}(0,1)$ since
\begin{align*}
& V(x)+\int_\mathbb{R}W(x,y)\varphi_{0,1}(y)\rmd y =  \frac{x^2}{2}+\frac{1}{2}\int_\mathbb{R}\epsilon(-2xy+y^2-1)\varphi_{0,1}(y)\rmd y =\frac{x^2}{2},
\end{align*}
where $\varphi_{0,1}$ is once again the density of the standard normal distribution.   The corresponding Hamiltonian dynamics of the mean-field $N$-particle system is given by \begin{equation*}
    \frac{d}{dt} q_t^i = p_t^i \;, \quad \frac{d}{dt} p_t^i = - q_t^i + \frac{\epsilon}{N} \sum_{j=1}^N q_t^j  \;.
\end{equation*}

By transforming to internal variables, this Hamiltonian system decouples into a collection of $N$ harmonic oscillators.  Explicitly, this transformation is given by
\[
Q^1 = \frac{1}{N} (q^1 + \cdots + q^N) \;, \quad
Q^{i+1} = q^{i+1} - q^{i} \;,
\] and the corresponding natural frequencies are defined by \[
\omega^1 = \sqrt{1-\epsilon} \;, \quad \omega^{i+1} = 1 \;, \quad \text{for } i \in \{ 1, \dots, N-1 \} \;. \]  Moreover, this transformation to internal variables can be inverted in $O(N)$ calculations: (i) set $\tilde q^1 = 0$ and recursively solve $\tilde q^i - \tilde q^{i-1} = Q^{i}$ for $i \in \{ 2, \dots, N \}$;  then, (ii) set $q^i = \tilde q^i + Q^1 - (1/N) \sum_{j=1}^N \tilde q^j$. Using this transformation to internal variables,  the computational cost per gradient evaluation of the $N$-particle system  is  linear in $N$.

We applied this fast implementation to estimate the asymptotic bias of uHMC with time integrator randomization for interaction strength $\epsilon=1/4$.  In particular, for accuracy $\varepsilon = 2^{-k}$ where $k \in \{1, 2, 3, 4, 5 \}$, we ran uHMC for $10^8$ steps with duration parameter $T=1$,  time step size  $h=\varepsilon^{2/3}$, and number of particles $N = \varepsilon^{-2}$.  With this choice of parameters, we expect that the asymptotic bias decays at least linearly with $\varepsilon$; see, in particular, the upper bound in  \eqref{eq:unif_propofchaos_exact2_unadj} as $m \nearrow \infty$.
An estimate of the asymptotic bias of uHMC is shown in  \Cref{fig1} as a function of $k$.   This estimate is obtained by computing  the relative error between a kernel density estimate of the marginal in the first component of uHMC and a standard normal density.  The dashed curve is parallel to the graph of $2^{-k} (=\varepsilon)$ versus $k$.  Since the asymptotic bias  seems to decrease linearly with accuracy $\varepsilon$, this figure is consistent with \eqref{eq:unif_propofchaos_exact2_unadj} with the hyperparameter choices $h =\varepsilon^{2/3}$ and  $N = \varepsilon^{-2}$.  We also tested a slightly larger time step size of $h= \varepsilon^{1/2}$, and obtained a similar finding.  This result suggests that the asymptotic bias of uHMC with randomized time integration is better than the strong accuracy of the underlying randomized time integrator, under perhaps higher regularity assumptions than assumed in this paper.  This  is not surprising since the orders of strong/weak accuracy do not always coincide; for a refined case study of this phenomenon, see \cite{alfonsi2014pathwise}.

\begin{figure}[ht]
\begin{center}
\includegraphics[scale=0.4]{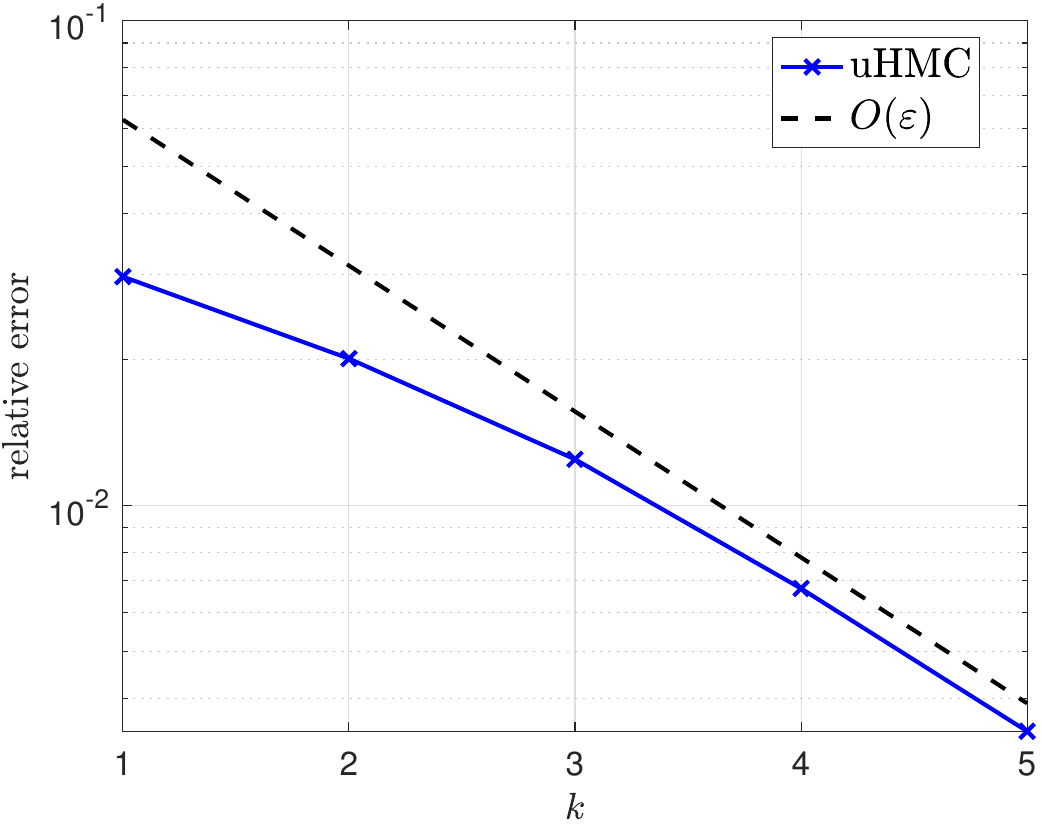}
\includegraphics[scale=0.4]{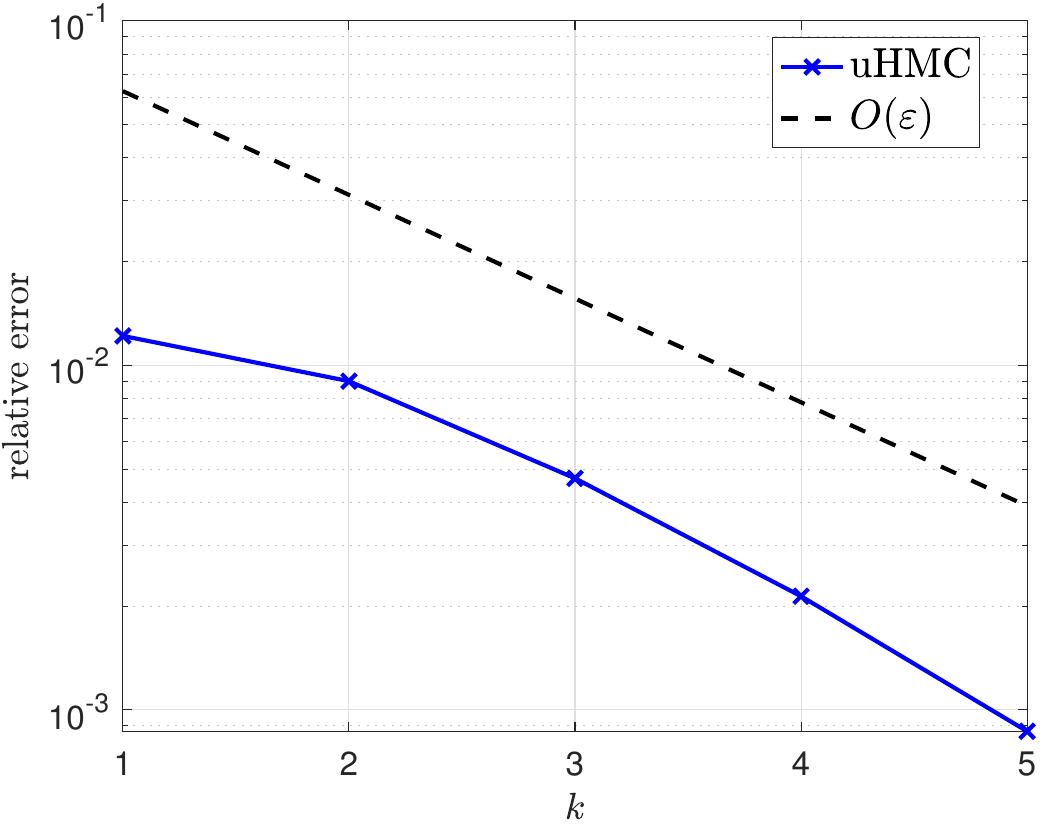}
\end{center}
\caption{\footnotesize{Asymptotic bias versus $k$ where $\varepsilon=2^{-k}$,  $N=\varepsilon^{-2}$, $h=\varepsilon^{1/2}$ (left image), and $h=\varepsilon^{2/3}$ (right image). }}\label{fig1}
\end{figure}


\section{Proofs of Main Results} \label{sec:proof}

\subsection{Proofs Related to Convergence of nHMC}

\label{sec:convnhmc_proofs}

\begin{proof}[Proof of \Cref{thm:contr_nonlinear}]
The proof closely tracks the proof of \cite[Theorem 3.1]{BouRabeeSchuh2023}. 
 We write $R=|\bar{\mathbf{X}}^{\bar{\mu}}(\bar{x},\bar{x}')-\bar{\mathbf{X}}'^{\bar{\mu}'}(\bar{x},\bar{x}')|$ and $r=|\bar{x}-\bar{x}'|$.
First we consider the case, when the synchronous coupling is applied, i.e., $r=|\bar{x}-\bar{x}'|\ge \tilde{R}$. By concavity of $f$, by \eqref{eq:R1}, by \Cref{lem:convexarea} and since $\sqrt{1-a}\leq 1-(a/2)$ for $a\in[0,1)$,
\begin{align}
\mathbb{E}[f(R)-f(r)]\leq f'(r)\mathbb{E}[R-r]\leq f'(r)(-1/8) K T^2r+f'(r)\epsilon\tilde{L}T^2\sqrt{\frac{7}{6}+\frac{3}{2KT^2}}\max_{s\leq T}\mathbb{E}[|\bar{z}_s|]. \label{eq:contr_largedist}
\end{align}
where  $\mathbb{E}[|\bar{z}_s|]=\mathbb{E}_{x\sim \bar{\mu}_0,x'\sim \bar{\mu}_0', \xi, \eta\sim\mathcal{N}(0,I_d)}[|\bar{q}_s(x,\xi, \bar{\mu}_0)-\bar{q}_s(x',\eta, \bar{\mu}_0')|]$.

If $r=|\bar{x}-\bar{x}'|<\tilde{R}$, then the initial velocities satisfy $\xi-\eta=\bar{w}=-\gamma\bar{z}$ with maximal possible probability and otherwise a reflection coupling is applied. We split the expectation $\mathbb{E}[f(R)-f(r)]$ according to these disjoint probabilities, i.e.,
\begin{align*}
\mathbb{E}[f(R)-f(r)]&=\mathbb{E}[f(R)-f(r);\{\bar{w}=-\gamma \bar{z}\}]+\mathbb{E}[f(R\wedge R_1)-f(r);\{\bar{w}\neq -\gamma \bar{z}\}]\\ & +\mathbb{E}[f(R)-f(R\wedge R_1); \{\bar{w}\neq-\gamma \bar{z}\}]=\rn{1}+\rn{2}+\rn{3}.
\end{align*}
As in \cite[Equation (6.9)]{BouRabeeSchuh2023}, it holds by \eqref{eq:gamma}
\begin{align} \label{eq:TV_bound}
	\mathbb{P}[\bar{w}\neq -\gamma \bar{z}]=\|\mathcal{N}(0,I_d)-\mathcal{N}(\gamma\bar{z},I_d)\|_{TV}\le \frac{2}{\sqrt{2\pi}}\int_0^{\gamma|\bar{z}|/2}\exp\left(-\frac{x^2}{2}\right)\rmd x \le \frac{\gamma|\bar{z}|}{\sqrt{2\pi}}<1/10.
\end{align}
To bound $\rn{1}$, note that on the set $\{\bar{w}=-\gamma\bar{z}\}$ by \eqref{eq:lem_nonlin5}, \eqref{eq:cond_T} and \eqref{eq:gamma},
\begin{align*}
R &\leq (1-\gamma T)|\bar{z}|+(L+\epsilon\tilde{L})T^2|\bar{z}|+\epsilon\tilde{L}T\max_{s\leq T}\mathbb{E}[|\bar{z}_s|]
\\ & \leq (1-\gamma T)|\bar{z}|+(\gamma T/4)|\bar{z}|+\epsilon\tilde{L}T\max_{s\leq T}\mathbb{E}[|\bar{z}_s|]=(1-(3/4)\gamma T)r+\epsilon\tilde{L}T\max_{s\leq T}\mathbb{E}[|\bar{z}_s|].
\end{align*}
Hence, by concavity of $f$ and \eqref{eq:TV_bound},
\begin{align*}
\rn{1}&\leq -f'(r)(3/4)\gamma Tr \mathbb{P}[\bar{w}= -\gamma \bar{z}]+f'(r)\epsilon\tilde{L}T\max_{s\leq T}\mathbb{E}[|\bar{z}_s|]
\\ & \leq -f'(r)(27/40)\gamma Tr +f'(r)\epsilon\tilde{L}T\max_{s\leq T}\mathbb{E}[|\bar{z}_s|].
\end{align*}
We bound $\rn{2}$ as in \cite[Equation (6.11)]{BouRabeeSchuh2023}, i.e., for $r,s\le R_1$, $f(s)-f(r)=\int_r^s e^{-t/T}\rmd t =T(e^{-r/T}-e^{-s/T})\leq T f'(r)$ and hence by \eqref{eq:TV_bound},
\begin{align}
\rn{2}\leq Tf'(r) \mathbb{P}[\bar{w}\neq -\gamma\bar{z}] \le (2/5)\gamma T r f'(r). \label{eq:boundII}
\end{align}
For $\rn{3}$, note that if $\bar{w}\neq -\gamma\bar{z}$, then $\bar{w}=2(\bar{e}\cdot\xi)\bar{e}$ with $\bar{e}=\bar{z}/|\bar{z}|$. Hence, $|\bar{z}+T\bar{w}|=|r+2T\bar{e}\cdot\xi|$, and by \eqref{eq:lem_nonlin6} and \eqref{eq:cond_T},
\begin{align*}
R\leq 5/4 \max(|r+2T\bar{e}\cdot\xi|,r)+\epsilon\tilde{L}T^2\max_{s\leq T}\mathbb{E}[|\bar{z}_s|].
\end{align*}
Since $(5/4)r-R_1\leq (5/4)\tilde{R}-R\leq 0$,
\begin{align*}
\mathbb{E}[(R-R_1)^+;\{\bar{w}\neq -\gamma\bar{z}\}] & \le \mathbb{E}\left[\Big(\frac{5}{4}\max(|r+2T\bar{e}\cdot\xi|,r)+\epsilon\tilde{L}T\max_{s\leq T}\mathbb{E}[|\bar{z}_s|]-R_1\Big)^+;\{\bar{w}\neq -\gamma\bar{z}\}\right]
\\ & \le  \mathbb{E}\left[\Big(\frac{5}{4}|r+2T\bar{e}\cdot\xi|-R_1\Big)^+;\{\bar{w}\neq -\gamma\bar{z}\}\right] +\epsilon\tilde{L}T^2\max_{s\leq T}\mathbb{E}\left[|\bar{z}_s|\right].
\end{align*}
For the first term, it holds 
as in \cite[Equation (6.14)]{BouRabeeSchuh2023},
\begin{align}
\mathbb{E}&\left[\Big(\frac{5}{4}|r+2T\bar{e}\cdot\xi|-R_1\Big)^+;\{\bar{w}\neq -\gamma\bar{z}\}\right] \nonumber
\\ & =\int_{-\gamma r/2}^\infty ((5/4)|r+2Tu|-R_1)^+\frac{1}{\sqrt{2\pi}}\Big(\exp(-u^2/2)-\exp(-(u+\gamma r)^2/2)\Big) \rmd u \nonumber
\\ & =\int_{-\gamma r/2}^{\gamma r/2}\Big(\frac{5}{4}|r+2Tu|-R_1)^+\frac{1}{\sqrt{2\pi}}\exp(-u^2/2)\rmd u \nonumber
\\ & + \int_{\gamma r/2}^\infty \Big(\Big(\frac{5}{4}|r+2Tu|-R_1\Big)^+-\Big(\frac{5}{4}|r+2T(u-\gamma r)|-R_1\Big)^+\Big)\frac{1}{\sqrt{2\pi}}\exp(-u^2/2) \rmd u \nonumber
\\ & \le \int_{\gamma r/2}^\infty \Big(\frac{5}{4} 2\gamma r T\Big) \frac{1}{\sqrt{2\pi}}\exp(-u^2/2)\rmd u \label{eq:boundIII}
\le \frac{5}{4}\gamma T r,
\end{align}
where \eqref{eq:gamma} and \eqref{eq:R1} is applied in the second last step.
Applying concavity of $f$,
\begin{align*}
\rn{3} &\le f'(R_1)\mathbb{E}[(R-R_1)^+;\{\bar{w}\neq -\gamma\bar{z}\}]
 \le f'(R_1) \Big( \frac{5}{4}\gamma T r +\epsilon\tilde{L}T^2\max_{s\leq T}\mathbb{E}[|\bar{z}_s|] \Big)
\\ & \le f'(r) \Big( \frac{5}{48}\gamma T r +\frac{1}{12}\epsilon\tilde{L}T^2\max_{s\leq T}\mathbb{E}[|\bar{z}_s|] \Big), 
\end{align*}
where the last step holds since $\exp(T^{-1}(R_1-\tilde{R}))\ge 12$ by \eqref{eq:R1}.
Combining the bounds for $\rn{1}$, $\rn{2}$ and $\rn{3}$ yields
\begin{align*}
\mathbb{E}[f(R)-f(r)]\le -f'(r)\frac{41}{240}\gamma T r+f'(r)\frac{13}{12}\epsilon\tilde{L}T^2\max_{s\leq T}\mathbb{E}[|\bar{z}_s|].
\end{align*}
Combining the two cases $r\ge \tilde{R}$ and $r < \tilde{R}$ yields
\begin{align*}
\mathbb{E}[f(R)-f(r)]\le -\min\Big(\frac{41}{240}\gamma T,\frac{1}{8} K T^2\Big)f'(r) r+\max\Big(\frac{13}{12}\epsilon\tilde{L}T^2,\epsilon\tilde{L}T^2\sqrt{\frac{7}{6}+\frac{3}{2KT^2}}\Big)f'(r)\max_{s\leq T}\mathbb{E}[|\bar{z}_s|],
\end{align*}
To bound the last term note that by \eqref{eq:gamma}
\begin{align*}
	\mathbb{E}[|\bar{w}| \1_{\{r < \tilde{R}	\} \cap \{\bar{w}\neq -\gamma \bar{z}\}}] &=\1_{\{r < \tilde{R}	\} } \int_{-\gamma r/2}^\infty \frac{2|u|}{\sqrt{2\pi}} (\exp(-u^2/2)-\exp(-(u+\gamma r)^2/2))\rmd u
	\\ & =\1_{\{r < \tilde{R}	\}} \Big(\int_{-\gamma r/2}^\infty \frac{2|u|}{\sqrt{2\pi}} \exp(-u^2/2)\rmd u- \int_{\gamma r/2}^\infty \frac{2|u-\gamma r|}{\sqrt{2\pi}} \exp(-u^2/2)\rmd u \Big)
	\\ &\le \1_{\{r < \tilde{R}	\} } \Big(\int_{-\gamma r/2}^{\gamma r/2} \frac{2|u|}{\sqrt{2\pi}} \exp(-u^2/2)\rmd u+\int_{\gamma r/2}^\infty \frac{2\gamma r}{\sqrt{2\pi}} \exp(-u^2/2)\rmd u \Big)
	\\ & \le \1_{\{r < \tilde{R}	\}} ((\gamma r)^2+ \gamma r)\le (5/4)\gamma r.
\end{align*}
Therefore, by \eqref{eq:cond_T} and \eqref{eq:lem_nonlin9} it holds,
\begin{align} \label{eq:expec_z_s}
\max_{s\leq T}\mathbb{E}[|\bar{z}_s|] & \le \frac{5}{4} \mathbb{E}[|\bar{z}|+|T\bar{w}|\1_{\{r < \tilde{R} \} \cap \{\bar{w}\neq -\gamma \bar{z}\}}] = \frac{5}{4} \mathbb{E}[|\bar{z}|]+\frac{5}{4}T\mathbb{E}[|\bar{w}|\1_{\{r < \tilde{R} \} \cap \{\bar{w}\neq -\gamma \bar{z}\}}] \le \frac{45}{16} \mathbb{E}[r],
\end{align}
where the last step holds by \eqref{eq:gamma} and the previous estimate.
By \eqref{eq:gamma} and \eqref{eq:cond_T} the minimum is attained at $(1/8)K T^2$ and the maximum is attained at $\epsilon\tilde{L}T^2\sqrt{\frac{7}{6}+\frac{3}{2KT^2}}$.
Further,
\begin{align} \label{eq:estimate_f}
\inf_r \frac{r f'(r)}{f(r)}\ge \frac{5}{4}\Big(\frac{\tilde{R}}{T}+2\Big)\exp\Big(-\frac{5\tilde{R}}{4T}\Big)\exp\Big(-\frac{5}{2}\Big) \quad \text{and} \quad r\le \exp\Big(\frac{5\tilde{R}}{4 T}\Big)\exp\Big(\frac{5}{2}\Big)f(r).
\end{align}
Hence,
\begin{align*}
\mathbb{E}[f(R)-f(r)]\leq -\frac{1}{78}K T^2 \exp\Big(-\frac{5\tilde{R}}{4T}\Big)\mathbb{E}[r],
\end{align*}
where we used \eqref{eq:cond_eps}. 
\end{proof}

\begin{proof}[Proof of \Cref{cor:invmeas}]
	By \eqref{eq:contraction} and \eqref{eq:rho}, it holds
	\begin{align*}
		\W^1_{\rho}(\bar{\mu}\bar{\pi},\bar{\mu}'\bar{\pi})\le (1-c)\mathbb{E}_{x\sim \bar{\mu},x'\sim \bar{\mu}'}[f(|x-x'|)].
	\end{align*}
	Taking the infimum over all couplings $\omega\in\Pi(\bar{\mu},\bar{\mu}')$, we obtain
	\begin{align*}
		\W^1_{\rho}(\bar{\mu}\bar{\pi},\bar{\mu}'\bar{\pi})\le(1-c)\W^1_{\rho}(\bar{\mu},\bar{\mu}').
	\end{align*}	
	Hence, the first bound in \eqref{eq:W_contraction} holds for all $m\in\mathbb{N}$, since $(1-c)^m\leq e^{-cm}$,
	and the second bound holds by \eqref{eq:distance_equiv} with $\mathbf{A}$ given in \eqref{eq:M_1}.
	The existence of a unique invariant probability measure $\bar{\mu}_*$ holds by Banach fixed-point theorem (cf.\cite[Theorem 3.9]{Eb20}) for the operator $\mathcal{T}:(\mathcal{P}(\mathbb{R}^d), \W^1)\to(\mathcal{P}(\mathbb{R}^d), \W^1), \bar{\mu}\to\bar{\mu}\bar{\pi}$ as for sufficiently large $m$, $\mathcal{T}^m$ is a contraction mapping and hence $\mathcal{T}$ has a unique fixed point which is $\bar{\mu}_*$ by \Cref{prop:inv_meas}.
\end{proof}

\begin{proof}[Proof of \Cref{cor:strong_cvx}] The result is an immediate consequence of \Cref{lem:convexarea}. We note that \eqref{eq:condTepsi_strongconv} implies \eqref{eq:cond_t}.
For $\mathcal{R}=0$, we consider the synchronous coupling. Writing $R=|\bar{\mathbf{X}}^{\bar{\mu}}(\bar{x},\bar{x}')-\bar{\mathbf{X}}'^{\bar{\mu}'}(\bar{x},\bar{x}')|$ and $r=|\bar{x}-\bar{x}'|$, we obtain by \eqref{eq:convexarea} with $\hat{C}=0$
\begin{align*}
    \mathbb{E}[R] &\le \mathbb{E}[\sqrt{1-(5/12)KT^2}r]+ \epsilon\tilde{L} T \sqrt{7/6+3/(2KT^2)} \max_{s\le T}\mathbb{E}[|\bar{z}_s|]. 
\end{align*}
Using $\sqrt{1-a}\le 1-(a/2)$ for $a\in[0,1)$ and \eqref{eq:lem_nonlin9} and \eqref{eq:condTepsi_strongconv}
it holds
\begin{align*}
    \mathbb{E}[R] \le  \mathbb{E}[(1-(5/24)KT^2)r]+ \epsilon\tilde{L} T \sqrt{7/6+3/(2KT^2)} (5/4)\mathbb{E}[r]\le \mathbb{E}[(1-(1/8)KT^2)r].
\end{align*}
Then, by following the proof of \Cref{cor:invmeas}, \eqref{eq:contr_strongconvW1} holds.
Analogously to the proof of \Cref{cor:invmeas}, the existence of a unique invariant probability measure $\bar{\mu}_*$ 
holds by the Banach fixed-point theorem.
    \end{proof}

\subsection{Proofs Related to Propagation of Chaos Phenomenon}

\label{sec:propofchaos_proofs}

\begin{proof}[Proof of \Cref{thm:propofchaos_onestep}]
Using a synchronous coupling of the transition steps of xHMC
$(\mathbf{X}^i)_{i=1}^N$ for the particle system on $\mathbb{R}^{N d}$
and $N$ copies of nHMC  $(\bar{\mathbf{X}}^{\bar{\mu},i})_{i=1}^N$, we prove propagation of chaos phenomenon for a single xHMC step. By \eqref{eq:hamdyn_nonl} and \eqref{eq:hamdyn_meanf_num}, the difference process $(\{z_t^i,w_t^i\}_{i=1}^N)_{t\ge 0}=(\{x_t^i-\bar{x}_t^i,v_t^i-\bar{v}_t^i\}_{i=1}^N)_{t\ge 0}$ of the Hamiltonian dynamics for the particle system and $N$ independent copies of the distribution-dependent Hamiltonian dynamics satisfies
\begin{align} \label{eq:diff_proc}
\begin{cases}
 \frac{\rmd}{\rmd t} z_t^i \ = \ w_t^i
\\ \frac{\rmd}{\rmd t} w_t^i  
 \ = \ -(\nabla V(x_t^i)-\nabla V(\bar{x}_t^i))-\epsilon(N^{-1}\sum_{j=1}^N \nabla_1 W(x_t^i,x_t^j)-\mathbb{E}_{u\sim\bar{\mu}_t}[\nabla_1 W(\bar{x}_t^i,u)]),
\end{cases} 
\end{align} where $\bar{\mu}_t^x=\Law(\bar{x}_t^i)$, $w_0^i=0$, and $x_0^i$ and $\bar{x}_0^i$ are i.i.d. random variables with law $\mu$ for all $i \in \{ 1, \ldots ,N \}$. For brevity, let $\mathbb{E}[\nabla_1 W(\bar{x}_t^i,\bar{x}_t)]=\mathbb{E}_{u\sim\bar{\mu}_t}[\nabla_1 W(\bar{x}_t^i,u)]$.
Then,
\begin{align*}
\sum_{i=1}^N z_s^i & =\int_0^s\int_0^r\sum_{i=1}^N\Big(-\nabla V(x_u^i)+\nabla V(\bar{x}_u^i)
 -\epsilon \Big(N^{-1}\sum_{j=1}^N \nabla_1 W(x_u^i,x_u^j)-\mathbb{E}[\nabla_1 W(\bar{x}_u^i,\bar{x}_u)] \Big)\Big)\rmd u\rmd r.
\end{align*}
Then by \Cref{ass}~\ref{ass_Vlip},
\begin{align*}
\sum_{i=1}^N |z_s^i| &\leq \int_0^s\int_0^r \sum_{i=1}^N \Big( L|z_u^i|+\epsilon\Big|N^{-1}\sum_{j=1}^N \nabla_1 W(\bar{x}_u^i,\bar{x}_u^j)-\mathbb{E}[\nabla_1 W(\bar{x}_u^i,\bar{x}_u)]\Big|\Big)\rmd u\rmd r
\\ & \quad + \int_0^s\int_0^r \sum_{i=1}^N\epsilon\Big|N^{-1}\sum_{j=1}^N \nabla_1 W(\bar{x}_u^i,\bar{x}_u^j)-\nabla_1 W(x_u^i,x_u^j) \Big|\rmd u\rmd r.
\end{align*}
Taking expectation,
\begin{align*}
\max_{s\leq T} \mathbb{E} \Big[\sum_{i=1}^N |z_s^i|\Big]&\leq \frac{(L+2\epsilon\tilde{L}) T^2}{2}\max_{s\le T} \mathbb{E}\Big[\sum_{i=1}^N|z_s^i|\Big]
\\ & + \frac{\epsilon T^2}{2}\max_{s\le T}\mathbb{E}\Big[ \sum_{i=1}^N \Big|N^{-1}\sum_{j=1}^N \nabla_1 W(\bar{x}_s^i,\bar{x}_s^j)-\mathbb{E}[\nabla_1 W(\bar{x}_s^i,\bar{x}_s)]\Big|\Big].
\end{align*}
Since $(L+2\epsilon\tilde{L})T^2\leq 1$,
\begin{align} \label{eq:propofchaos_est1}
\max_{s\leq T} \mathbb{E} \Big[\sum_{i=1}^N |z_s^i|\Big]\leq \epsilon T^2\max_{s\le T}\mathbb{E}\Big[\sum_{i=1}^N \Big|N^{-1}\sum_{j=1}^N \nabla_1 W(\bar{x}_s^i,\bar{x}_s^j)-\mathbb{E}[\nabla_1 W(\bar{x}_s^i,\bar{x}_s)]\Big|\Big].
\end{align}

To bound $\mathbb{E}[  |N^{-1}\sum_{j=1}^N \nabla_1 W(\bar{x}_s^i,\bar{x}_s^j)-\mathbb{E}[\nabla_1 W(\bar{x}_s^i,\bar{x}_s)]|]$, note that given $\bar{x}_t^i$, the random variable $\bar{x}_t^j$ are i.i.d. with law $\bar{\mu}_t$. Hence, it holds
\begin{align*}
\mathbb{E}[\nabla_1 W(\bar{x}_t^i,\bar{x}_t^j)|\bar{x}_t^i]=\int_{\mathbb{R}^d} \nabla_1 W(\bar{x}_t^i, u) \bar{\mu}_t(\rmd u).
\end{align*}
Therefore, it holds
\begin{align*}
& \mathbb{E}\Big[|\int_{\mathbb{R}^d} \nabla_1 W(\bar{x}_t^i, u) \bar{\mu}_t(\rmd u)-\frac{1}{N}\sum_{j=1}^N\nabla_1 W(\bar{x}_t^i,\bar{x}_t^j)|^2\Big|\bar{x}_t^i\Big]=\frac{N-1}{N^2}\Var_{\bar{\mu}_t}(\nabla_1 W(\bar{x}_t^i,\cdot))
\\  & \qquad + \frac{2}{N^2}\sum_{j=1, j\neq i}^N \mathbb{E}\Big[|\int_{\mathbb{R}^d} \nabla_1 W(\bar{x}_t^i, u) \bar{\mu}_t(\rmd u)-\frac{1}{N}\sum_{j=1}^N\nabla_1 W(\bar{x}_t^i,\bar{x}_t^j)|
\\ & \qquad \qquad\qquad\qquad\cdot|\int_{\mathbb{R}^d} \nabla_1 W(\bar{x}_t^i, u) \bar{\mu}_t(\rmd u)-\frac{1}{N}\sum_{j=1}^N\nabla_1 W(\bar{x}_t^i,\bar{x}_t^i)|\Big|\bar{x}_t^i\Big]
\\ & \qquad + \frac{1}{N^2} \mathbb{E}\Big[|\int_{\mathbb{R}^d} \nabla_1 W(\bar{x}_t^i, u) \bar{\mu}_t(\rmd u)-\frac{1}{N}\sum_{j=1}^N\nabla_1 W(\bar{x}_t^i,\bar{x}_t^i)|^2\Big|\bar{x}_t^i\Big].
\end{align*}
By \Cref{ass}\ref{ass_Wlip}, Cauchy-Schwarz inequality and Young's inequality
\begin{align*}
& \mathbb{E}\Big[|\int_{\mathbb{R}^d} \nabla_1 W(\bar{x}_t^i, x) \bar{\mu}_t(\rmd x)-\frac{1}{N}\sum_{j=1}^N\nabla_1 W(\bar{x}_t^i,\bar{x}_t^j)|^2\Big]
\\ & \qquad \leq \frac{4 \tilde{L}^2}{N}\int_{\mathbb{R}^d}  |x|^2 \bar{\mu}_t(\rmd x)+\frac{8 \tilde{L}^2}{N}\int_{\mathbb{R}^d}  |x|^2 \bar{\mu}_t(\rmd x)
+\frac{4 \tilde{L}^2}{N^2}\int_{\mathbb{R}^d}  |x|^2 \bar{\mu}_t(\rmd x).
\end{align*}
Then, by Cauchy-Schwarz inequality
\begin{align*}
\mathbb{E}\left[  |N^{-1}\sum_{j=1}^N \nabla_1 W(x_t^i,x_t^j)-\mathbb{E}[\nabla_1 W(\bar{x}_t^i,\bar{x}_t)]|\right]\leq \frac{4\tilde{L}}{N^{1/2}}\Big(\int_{\mathbb{R}^d}  |x|^2 \bar{\mu}_t(\rmd x)\Big)^{1/2}\leq \frac{4\tilde{L}}{N^{1/2}}\mathbf{B}_1^{1/2},
\end{align*}
where $\mathbf{B}_1$ is the uniform second moment bound. Hence, by \eqref{eq:propofchaos_est1}
\begin{align*}
\max_{s\leq T} \mathbb{E} \Big[N^{-1}\sum_{i=1}^N |z_s^i|\Big]\leq \epsilon T^2 4\tilde{L} N^{-1/2} \mathbf{B}_1^{1/2}
\end{align*} as required. \end{proof}

\begin{proof}[Proof of \Cref{thm:propofchaos_uniform}]
\Cref{thm:contr_meanf} in \Cref{appendix} implies that
\begin{align} \label{eq:contrac_meanfield}
\W_{\rho_N}^1(\mu^{\otimes N}\pi,\bar{\mu}^{\otimes N}\pi)\leq (1-c)\W_{\rho_N}^1(\mu^{\otimes N},\bar{\mu}^{\otimes N})
\end{align}
with $c$ given in \eqref{eq:c}. 
Then, 
\begin{align*}
\W_{\rho_N}^1&((\bar{\mu}\bar{\pi}^m)^{\otimes N},\mu^{\otimes N}\pi^m) 
  \leq \sum_{i=0}^{m-1}  
\W_{\rho_N}^1((\bar{\mu}\bar{\pi}^{m-i})^{\otimes N}\pi^i,(\bar{\mu}\bar{\pi}^{m-i-1})^{\otimes N}\pi^{i+1})
 +\W_{\rho_N}^1(\bar{\mu}^{\otimes N}\pi^m,\mu^{\otimes N}\pi^m)
\\
& \leq \sum_{i=0}^{m-1} (1-c)^{i}
\W_{\rho_N}^1((\bar{\mu}\bar{\pi}^{m-i})^{\otimes N},(\bar{\mu}\bar{\pi}^{m-i-1})^{\otimes N}\pi)+(1-c)^m\W_{\rho_N}^1(\bar{\mu}^{\otimes N},\mu^{\otimes N})
\\ & \leq  \sum_{i=0}^{m-1} (1-c)^{i} \mathbf{B}N^{-1/2}+e^{-cm}\W_{\rho_N}^1(\bar{\mu}^{\otimes N},\mu^{\otimes N})\leq c^{-1}\mathbf{B}N^{-1/2}+e^{-cm}\W_{\rho_N}^1(\bar{\mu}^{\otimes N},\mu^{\otimes N}).
\end{align*}
In the second to last step, we used \Cref{thm:propofchaos_onestep}.
In the last step, we take the limit of the geometric series, since $\mathbf{B}$ does not depend on $i$, but on the second moment of $\bar{\mu}$ by \Cref{lem:secondmoment}. The second part of \eqref{eq:unif_propofchaos_exact} holds using \eqref{eq:distance_equiv}.

To prove \eqref{eq:unif_propofchaos_exact2}, we either set ${\mu}=\bar{\mu}_*$ and use that by \Cref{thm:contr_nonlinear} there exists an unique invariant measure $\bar{\mu}_*$ for the transition step $\bar{\pi}$, i.e., $\bar{\mu}_*=\bar{\mu}_*\bar{\pi}$, or we can prove it directly.
Applying the `triangle inequality trick' \cite[Remark 6.3]{MaStTr10} with \eqref{eq:contrac_meanfield}, $\bar{\mu}_*=\bar{\mu}_*\bar{\pi}$, and ${\mu}_*^N={\mu}_*^N{\pi}$,
\begin{align*}
& \W_{\rho_N}^1(\bar{\mu}_*^{\otimes N},\mu_*^N)=\W_{\rho_N}^1((\bar{\mu}_*\bar{\pi})^{\otimes N},\mu_*^N\pi) \le \W_{\rho_N}^1((\bar{\mu}_*\bar{\pi})^{\otimes N},(\bar{\mu}_*)^{\otimes N}\pi)+ \W_{\rho_N}^1((\bar{\mu}_*)^{\otimes N}\pi,\mu_*^N\pi)
\\ & \qquad \overset{\eqref{eq:contrac_meanfield}}{\le}  \W_{\rho_N}^1((\bar{\mu}_*\bar{\pi})^{\otimes N},(\bar{\mu}_*)^{\otimes N}\pi)+ (1-c)\W_{\rho_N}^1((\bar{\mu}_*)^{\otimes N},\mu_*^N).
\end{align*}
Simplifying this inequality yields,
\begin{align*}
\W_{\rho_N}^1(\bar{\mu}_*^{\otimes N},\mu_*^N)\leq c^{-1} \W_{\rho_N}^1((\bar{\mu}_*\bar{\pi})^{\otimes N},(\bar{\mu}_*)^{\otimes N}\pi).
\end{align*}
By \Cref{thm:propofchaos_onestep} and \eqref{eq:distance_equiv},
\begin{align*}
\W_{\rho_N}^1(\bar{\mu}_*^{\otimes N},\mu_*^N)\leq c^{-1} \mathbf{B}N^{-1/2}.
\end{align*}
One more application of \eqref{eq:contrac_meanfield} then yields
\begin{align*}
& \W_{\rho_N}^1(\mu^{\otimes N}\pi^m,(\bar{\mu}_*)^{\otimes N})  \leq \W_{\rho_N}^1(\mu^{\otimes N}\pi^m,\mu_*^N )+\W_{\rho_N}^1(\mu_*^N,(\bar{\mu}_*)^{\otimes N}) \\
& \qquad \leq  e^{-cm}\W_{\rho_N}^1(\mu^{\otimes N},(\bar{\mu}_*)^{\otimes N}) +c^{-1}\mathbf{B}N^{-1/2}.
\end{align*}
By \eqref{eq:distance_equiv}, we obtain the second bound in \eqref{eq:unif_propofchaos_exact2}. \end{proof}

\begin{proof}[Proof of \Cref{thm:propofchaos_uniform_unad}]
	The proof follows by combining the result of the previous theorem and the strong accuracy result of uHMC developed in \Cref{sec:strong_accuracy}. To obtain \eqref{eq:unif_propofchaos_unadj}, we note that by the triangle inequality and \eqref{eq:unif_propofchaos_exact},
	\begin{align}
		\W_{\rho_N}^1((\bar{\mu}\bar{\pi}^m)^{\otimes N},\mu^{\otimes N}{\pi_h}^m) & \le \W_{\rho_N}^1((\bar{\mu}\bar{\pi}^m)^{\otimes N},\mu^{\otimes N}{\pi}^m)+ \W_{\rho_N}^1(\mu^{\otimes N}{\pi}^m,\mu^{\otimes N}{\pi_h}^m) \nonumber
		\\ & \le e^{-cm}\W^1_{\rho}(\bar{\mu},\mu)+c^{-1}\mathbf{B}N^{-1/2}+ \W_{\rho_N}^1(\mu^{\otimes N}{\pi}^m,\mu^{\otimes N}{\pi_h}^m) \label{eq:proof_prop_unad1}.
	\end{align}
	Hence, it remains to bound the last term, 
	\begin{align}
		\W_{\rho_N}^1(\mu^{\otimes N}{\pi}^m,\mu^{\otimes N}{\pi_h}^m)&\le \sum_{k=1}^m \W_{\rho_N}^1(\mu^{\otimes N}{\pi}^k{\pi_h}^{m-k},\mu^{\otimes N}{\pi}^{k-1}{\pi_h}^{m-k+1})  \nonumber
		\\ & \le  \sum_{k=1}^m \W_{\rho_N}^1(\mu^{\otimes N}{\pi}^k,\mu^{\otimes N}{\pi}^{k-1}{\pi_h})(1-{c})^{m-k}, \label{eq:proof_prop_unad2}
	\end{align}
	where the last step follows by Theorem~\ref{thm:contr_uHMC} with $c$ given by \eqref{eq:c_meanf}.
	We note that similarly as in \Cref{lem:secondmoment} the second moment of the Markov chain generated by the exact Hamiltonian dynamics for the mean-field particle model is uniformly bounded, i.e., there exists $\mathbf{B}_2<\infty$ such that $\sup_{m\in\mathbb{N}}\mathbb{E}[N^{-1}\sum_{i=1}^N|\mathbf{X}_m^i|^2]\le \mathbf{B}_2$ provided the initial distribution has finite second moment; see  \Cref{lem:secondmoment_meanf} in \Cref{appendix}. Here, we denote by $\mathbf{B}_2(\mu)$ the uniform-in-steps moment bound corresponding to the initial distribution $\mu^{\otimes N}$.
	By Theorem~\ref{thm:strong_accuracy}, for all $k\in\mathbb{N}$,
	\begin{align*}
		\W_{\rho_N}^1(\mu^{\otimes N}{\pi}^k,\mu^{\otimes N}{\pi}^{k-1}{\pi_h})&\le h^{3/2}\mathbf{B}_3 L^{3/4}\left(T\sqrt{d}+ \left(\mathbb{E}_{\{x^i\}_{i=1}^N\sim (\mu^{\otimes N}\pi^{k-1}) }\Big[\frac{1}{N}\sum_{i=1}^N|x^i|^2\Big] \right)^{1/2}+ (\epsilon/K)\mathbf{W}_0\right)
\\ & \le h^{3/2}L^{3/4} \mathbf{B}_3(T\sqrt{d}+\sqrt{\mathbf{B}_2(\mu)}+(\epsilon/K) \mathbf{W}_0),
	\end{align*}
	where $\mathbf{B}_3$ is some numerical constant. 
	Inserting this bound in \eqref{eq:proof_prop_unad2} yields
	\begin{align*}
	& \W_{\rho_N}^1(\mu^{\otimes N}{\pi}^m,\mu^{\otimes N}{\pi_h}^m) \le \sum_{k=1}^m (1-{c})^{m-k} h^{3/2}L^{3/4} \mathbf{B}_3(T\sqrt{d}+\sqrt{\mathbf{B}_2(\mu)}+(\epsilon/K)\mathbf{W}_0) 
 \\ & \qquad  \le \frac{h^{3/2}L^{3/4}}{c}  \mathbf{B}_3(T\sqrt{d}+\sqrt{\mathbf{B}_2(\mu)}+(\epsilon/K)\mathbf{W}_0),
	\end{align*}
	and combined with \eqref{eq:proof_prop_unad1} the first bound of \eqref{eq:unif_propofchaos_unadj} is obtained with $\mathbf{C}={c}^{-1} L^{3/4} \mathbf{B}_3(T\sqrt{d}+\sqrt{\mathbf{B}_2(\mu)}+(\epsilon/K)\mathbf{W}_0)$. The second bound holds by \eqref{eq:distance_equiv} with $\mathbf{A}$ given in \eqref{eq:M_1}. Equation \eqref{eq:unif_propofchaos_exact2_unadj} holds for $\bar{\mu}=\bar{\mu}_*$.
\end{proof}

\section{Estimates for the Nonlinear Hamiltonian flow}\label{sec:estimates} 

In this section,  bounds are developed for the nonlinear Hamiltonian flow  that are crucial in the proofs of the main results.

\subsection{Deviation from Free Nonlinear Flow}\label{sec:deviation_freedyn}
   Throughout this subsection, we fix  $t\in(0,\infty)$ and  $\epsilon\in[0,\infty)$ such that \begin{align}\label{eq:t_cond}
(L+2\epsilon\tilde{L})t^2\leq 1.
\end{align}
Denote by $(\bar{x}_s,\bar{v}_s)_{s\geq 0}$ the solution to \eqref{eq:hamdyn_nonl} with initial condition $(x,v,\bar{\mu})\in\mathbb{R}^{2d}\times\mathcal{P}(\mathbb{R}^d)$.
Write $\mathbb{E}[|\bar{x}_t|]=\mathbb{E}_{x\sim\bar{\mu}_t}[|x|]$ and $\mathbb{E}[|\bar{v}_t|]=\mathbb{E}_{v\sim\bar{\varphi}_t}[|v|]$, where $\bar{\mu}_t$ and $\bar{\varphi}_t$ are the marginal distribution in the first and second variable of the corresponding law of $(\bar{x}_t,\bar{v}_t)$ at time $t \geq 0$, respectively.
It is also notationally convenient to introduce 
\begin{align} \label{eq:nablaW}
	\mathbf{W}_0 \ := \ |\nabla_1 W(0,0)|.
\end{align}

\begin{lemma}\label{lem:estimate1}
Let $\bar{x},\bar{v}\in\mathbb{R}^d$. Let $t\in[0,\infty)$ satisfy \eqref{eq:t_cond}. Then, for $(\bar{x}_0,\bar{v}_0)=(\bar{x},\bar{v})$,
\begin{align}
 \max_{s\leq t} |\bar{x}_s|&\leq (1+(L+\epsilon\tilde{L})t^2)\max(|\bar{x}|,|\bar{x}+t\bar{v}|)+t^2\epsilon\tilde{L}\max_{s\leq t}\mathbb{E}[|\bar{x}_s|] +\epsilon t^2\mathbf{W}_0, \label{eq:lem_nonlin1}
\\ \max_{s\leq t} |\bar{v}_s|&\leq |\bar{v}|+(L+\epsilon\tilde{L})t(1+(L+\epsilon\tilde{L})t^2)\max(|\bar{x}|,|\bar{x}+t\bar{v}|) 
\nonumber
\\ & +\epsilon\tilde{L}t(1+(L+\epsilon\tilde{L})t^2)\max_{s\leq t}\mathbb{E}[|\bar{x}_s|] +\epsilon t(1+(L+\epsilon\tilde{L})t^2)\mathbf{W}_0. \label{eq:lem_nonlin2}
\end{align}
Moreover, 
\begin{align}
 \max_{s\leq t}\mathbb{E}[ |\bar{x}_s|]&\leq (1+(L+2\epsilon\tilde{L})t^2)\mathbb{E}[\max(|\bar{x}|,|\bar{x}+t\bar{v}|)] +\epsilon t^2\mathbf{W}_0, \label{eq:lem_nonlin3}
 \\ \max_{s\leq t}\mathbb{E}[ |\bar{v}_s|]&\leq \mathbb{E}[|\bar{v}|]+ (L+2\epsilon\tilde{L})(1+(L+2\epsilon\tilde{L})t^2)\mathbb{E}[\max(|\bar{x}|,|\bar{x}+t\bar{v}|)] \nonumber
 \\ & +\epsilon t(1+(L+\epsilon\tilde{L})t^2)\mathbf{W}_0.  \label{eq:lem_nonlin4}
\end{align}
\end{lemma}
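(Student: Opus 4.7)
The plan is to start from the Duhamel-type representations
$\bar{v}_s = \bar{v} - \int_0^s \nabla U_{\bar{\mu}_u}(\bar{x}_u)\,du$
and
$\bar{x}_s = \bar{x} + s\bar{v} - \int_0^s (s-u)\nabla U_{\bar{\mu}_u}(\bar{x}_u)\,du$,
obtained by integrating \eqref{eq:hamdyn_nonl} twice in time. Using \Cref{ass}~\ref{ass_Vmin} and \ref{ass_Vlip} to write $|\nabla V(\bar{x}_u)|\le L|\bar{x}_u|$, and \ref{ass_Wlip} together with $\mathbf{W}_0=|\nabla_1 W(0,0)|$ to bound $|\int \nabla_1 W(\bar{x}_u,y)\,\bar{\mu}_u(dy)|\le \tilde{L}|\bar{x}_u|+\tilde{L}\,\mathbb{E}[|\bar{x}_u|]+\mathbf{W}_0$, I obtain the pointwise forcing estimate $|\nabla U_{\bar{\mu}_u}(\bar{x}_u)|\le (L+\epsilon\tilde{L})|\bar{x}_u|+\epsilon\tilde{L}\,\mathbb{E}[|\bar{x}_u|]+\epsilon\mathbf{W}_0$. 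The linear-interpolation identity $\bar{x}+s\bar{v}=(1-s/t)\bar{x}+(s/t)(\bar{x}+t\bar{v})$ produces $|\bar{x}+s\bar{v}|\le \max(|\bar{x}|,|\bar{x}+t\bar{v}|)$ for all $s\in[0,t]$, which is the origin of the $\max$ term appearing on the right-hand sides.

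First I would prove \eqref{eq:lem_nonlin3}. Taking expectations in the integral representation for $|\bar{x}_s|$ and writing $\bar{M}:=\max_{s\le t}\mathbb{E}[|\bar{x}_s|]$ yields the scalar inequality $\bar{M}\le \mathbb{E}[\max(|\bar{x}|,|\bar{x}+t\bar{v}|)]+(t^2/2)(L+2\epsilon\tilde{L})\bar{M}+(t^2/2)\epsilon\mathbf{W}_0$. Under \eqref{eq:t_cond} the coefficient of $\bar{M}$ is at most $1/2$, so the elementary estimate $(1-a)^{-1}\le 1+2a$ valid for $a\in[0,1/2]$ produces exactly the factor $1+(L+2\epsilon\tilde{L})t^2$ and absorbs the residual $\mathbf{W}_0$ term into $\epsilon t^2$.

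Next, \eqref{eq:lem_nonlin1} is proved by the same manipulation done pathwise rather than in expectation: with $M:=\max_{s\le t}|\bar{x}_s|$, one obtains $M\le \max(|\bar{x}|,|\bar{x}+t\bar{v}|)+(t^2/2)(L+\epsilon\tilde{L})M+(t^2/2)\epsilon\tilde{L}\,\bar{M}+(t^2/2)\epsilon\mathbf{W}_0$. Because $(L+\epsilon\tilde{L})t^2\le (L+2\epsilon\tilde{L})t^2\le 1$, the same $(1-a)^{-1}\le 1+2a$ trick produces the factor $1+(L+\epsilon\tilde{L})t^2$ while $\bar{M}$ stays on the right as written in the statement. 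The bounds \eqref{eq:lem_nonlin2} and \eqref{eq:lem_nonlin4} are then straightforward: integrating the $\bar{v}$-equation once yields $|\bar{v}_s|\le |\bar{v}|+t[(L+\epsilon\tilde{L})M+\epsilon\tilde{L}\,\bar{M}+\epsilon\mathbf{W}_0]$, and substituting \eqref{eq:lem_nonlin1} followed by collecting terms produces the common factor $(1+(L+\epsilon\tilde{L})t^2)$ that appears in the statement; plugging \eqref{eq:lem_nonlin3} into the expectation of the same bound gives \eqref{eq:lem_nonlin4}.

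The main obstacle is not analytic but bookkeeping: the pathwise position inequality couples the random $M$ with the deterministic $\bar{M}$, so one must either establish the expectation bound first or solve a small $2\times 2$ fixed-point system. The mismatch between the coefficients $L+\epsilon\tilde{L}$ and $L+2\epsilon\tilde{L}$ in the two estimates is precisely the cost of turning the self-referential $\bar{M}$ term into a self-referential $M$ term after taking expectations, which is what makes \eqref{eq:t_cond} the natural smallness hypothesis for both bounds to close simultaneously.
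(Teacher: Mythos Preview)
Your proposal is correct and follows essentially the same route as the paper. The only cosmetic difference is that the paper applies the fixed-point argument to the deviation $\max_{s\le t}|\bar{x}_s-\bar{x}-s\bar{v}|$ rather than to $M=\max_{s\le t}|\bar{x}_s|$ itself; this makes the prefactor $1+(L+\epsilon\tilde{L})t^2$ appear only on the $\max(|\bar{x}|,|\bar{x}+t\bar{v}|)$ term directly after a final triangle inequality, whereas you obtain it on all terms and then use $1+(L+\epsilon\tilde{L})t^2\le 2$ to absorb the extra factor on the $\bar{M}$ and $\mathbf{W}_0$ terms. Either bookkeeping yields the stated bounds, and your observation that the pathwise and expectation estimates can be closed independently (so no genuine $2\times 2$ system is needed) is correct.
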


\begin{proof}
The proof is postponed to \Cref{sec:proof_estimates}.
\end{proof}

Let $(\bar{x}_s,\bar{v}_s)_{s\geq 0}$ and $(\bar{x}'_s,\bar{v}_s')_{s\geq 0}$ be two processes driven by the Hamiltonian dynamics \eqref{eq:hamdyn_nonl} with initial condition $(x,v,\bar{\mu})\in\mathbb{R}^{2d}\times\mathcal{P}(\mathbb{R}^d)$ and $(y,u,\bar{\mu}')\in\mathbb{R}^{2d}\times\mathcal{P}(\mathbb{R}^d)$.
By \eqref{eq:hamdyn_nonl},  $(\bar{z}_s,\bar{w}_s)=(\bar{x}_s-\bar{x}_s',\bar{v}_s-\bar{v}_s')$ satisfies
\begin{equation} \label{eq:nonlin_diff}
\begin{cases}
& \frac{\rmd }{\rmd t} \bar{z}_s=\bar{w}_s
\\ & \frac{\rmd }{\rmd t} \bar{w}_s=-(\nabla V(\bar{x}_t)-\nabla V(\bar{x}_t'))-\epsilon \Big(\int_{\mathbb{R}^d} \nabla_x W(\bar{x}_t,\tilde{x}) \bar{\mu}_t(\rmd \tilde{x})- \int_{\mathbb{R}^d}\nabla_x W(\bar{x}'_t,\tilde{x}')\bar{\mu}_t'(\rmd \tilde{x}')\Big)
\\ & \bar{\mu}_t=\Law(\bar{x}_t) \qquad \bar{\mu}_t'=\Law(\bar{x}'_t).
\end{cases}
\end{equation}
We write $\mathbb{E}[|\bar{z}_s|]=\mathbb{E}_{x\sim \bar{\mu}_t,x'\sim \bar{\mu}_t'}[|x-x'|]$ and  $\mathbb{E}[|\bar{w}_s|]=\mathbb{E}_{v\sim \bar{\varphi}_t,v'\sim \bar{\varphi}_t'}[|v-v'|]$, where $\bar{\mu}_t$, $\bar{\mu}_t'$ $\bar{\varphi}_t$ and $\bar{\varphi}_t'$ are the marginal distributions in the position and velocity component, respectively.

\begin{lemma} \label{lem:estimate2}
Let $\bar{x},\bar{x}',\bar{v},\bar{v}'\in\mathbb{R}^d$. 
Let $t\in[0,\infty)$ satisfy \eqref{eq:t_cond}. Then, for $(\bar{z},\bar{w})=(\bar{x}-\bar{x}',\bar{v}-\bar{v}')$,
\begin{align}
&\max_{s\leq t}|\bar{z}_s-\bar{z}-s\bar{w}|\leq (L+\epsilon\tilde{L})t^2\max(|\bar{z}+t\bar{w}|,|\bar{z}|)+\epsilon\tilde{L}t^2\max_{s\leq t} \mathbb{E}[|\bar{z}_s|] , \label{eq:lem_nonlin5}
\\ &\max_{s\leq t}|\bar{z}_s|\leq (1+(L+\epsilon\tilde{L})t^2)\max(|\bar{z}+t\bar{w}|,|\bar{z}|)+\epsilon\tilde{L}t^2\max_{s\leq t} \mathbb{E}[|\bar{z}_s|], \label{eq:lem_nonlin6}
\\ & \max_{s\leq t}|\bar{w}_s-\bar{w}|\leq (L+\epsilon\tilde{L})t(1+(L+\epsilon\tilde{L})t^2)\max(|\bar{z}+t\bar{w}|,|\bar{z}|) 
 +\epsilon\tilde{L}(1+(L+\epsilon\tilde{L})t^2)\max_{s\leq t} \mathbb{E}[|\bar{z}_s|] , \label{eq:lem_nonlin7}
\\ & \max_{s\leq t}|\bar{w}_s|\leq |\bar{w}|+ (L+\epsilon\tilde{L})t(1+(L+\epsilon\tilde{L})t^2)\max(|\bar{z}+t\bar{w}|,|\bar{z}|) 
+\epsilon\tilde{L}t(1+(L+\epsilon\tilde{L})t^2)\max_{s\leq t} \mathbb{E}[|\bar{z}_s|] . \label{eq:lem_nonlin8}
\end{align}
Moreover,
\begin{align}
& \max_{s\leq t }\mathbb{E}[|\bar{z}_s|]\leq (1+(L+2\epsilon\tilde{L})t^2)\mathbb{E}[\max(|\bar{z}+t\bar{w}|,|\bar{z}|)], \label{eq:lem_nonlin9}
\\ & \max_{s\leq t }\mathbb{E}[|\bar{w}_s|]\leq (L+2\epsilon\tilde{L})t(1+(L+2\epsilon\tilde{L})t^2)\mathbb{E}[\max(|\bar{z}+t\bar{w}|,|\bar{z}|)]+\mathbb{E}[|\bar{w}|]. \label{eq:lem_nonlin10}
\end{align}
\end{lemma}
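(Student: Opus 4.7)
The plan is to adapt the strategy used in the proof of \Cref{lem:estimate1}, now applied to the difference process $(\bar z_s, \bar w_s)$ defined by \eqref{eq:nonlin_diff}. First, I would integrate the velocity equation twice to get the integral form, valid for $s \in [0,t]$,
\begin{align*}
\bar z_s - \bar z - s\bar w \;=\; -\int_0^s\!\!\int_0^r \Big[\bigl(\nabla V(\bar x_u) - \nabla V(\bar x_u')\bigr) + \epsilon\Big(\!\int\!\nabla_1 W(\bar x_u,\tilde x)\bar\mu_u(\mathrm{d}\tilde x) - \!\int\!\nabla_1 W(\bar x_u',\tilde x')\bar\mu_u'(\mathrm{d}\tilde x')\Big)\Big] \mathrm{d}u \, \mathrm{d}r.
\end{align*}
By \Cref{ass}\ref{ass_Vlip}, the first term is pointwise bounded by $L|\bar z_u|$. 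For the mean-field interaction difference, the key observation is that \Cref{ass}\ref{ass_Wlip} applied to the coupling $(\bar x_u, \bar x_u')$ that realizes $\mathbb{E}[|\bar z_u|]$ yields the bound $\tilde L(|\bar z_u| + \mathbb{E}[|\bar z_u|])$.

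Next, I would exploit the linear-interpolation inequality $|\bar z + s\bar w| \le \max(|\bar z|, |\bar z+t\bar w|) =: A$ valid for $s \in [0,t]$. Setting $M_t := \max_{s\le t}|\bar z_s|$ and $\tilde M_t := \max_{s\le t}\mathbb{E}|\bar z_s|$, combining the estimates above gives
\begin{align*}
|\bar z_s| \;\le\; A + \tfrac{s^2}{2}\bigl[(L+\epsilon\tilde L) M_t + \epsilon\tilde L\, \tilde M_t\bigr].
\end{align*}
Taking the maximum over $s \in [0,t]$ (respectively, taking expectation first and then the maximum) yields two coupled fixed-point inequalities for $M_t$ and $\tilde M_t$. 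Using the assumption $(L+2\epsilon\tilde L)t^2 \le 1$ together with the elementary estimate $1/(1-x) \le 1 + 2x$ for $x \in [0,1/2]$, I would invert these to obtain \eqref{eq:lem_nonlin6} and \eqref{eq:lem_nonlin9}. Substituting these back into the displayed integral identity for $\bar z_s - \bar z - s\bar w$ then yields \eqref{eq:lem_nonlin5}. The velocity estimates \eqref{eq:lem_nonlin7}, \eqref{eq:lem_nonlin8}, and \eqref{eq:lem_nonlin10} follow analogously from the single-integral representation of $\bar w_s - \bar w$, using the position bounds just derived.

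The main subtlety lies in tracking the mean-field interaction term in expectation. When bounding $\tilde M_t$, both $|\bar z_u|$ and $\mathbb{E}|\bar z_u|$ appearing in the $\epsilon\tilde L$-term contribute to $\tilde M_t$ after taking the expectation, so the effective Lipschitz constant becomes $L + 2\epsilon\tilde L$ rather than $L + \epsilon\tilde L$; this is precisely why the assumption \eqref{eq:t_cond} features $(L + 2\epsilon\tilde L)t^2$. Careful bookkeeping is required to ensure that the Gronwall step closes cleanly and that the prefactors in \eqref{eq:lem_nonlin5}--\eqref{eq:lem_nonlin10} come out with exactly the stated coefficients $(L+\epsilon\tilde L)$ vs.~$(L+2\epsilon\tilde L)$.
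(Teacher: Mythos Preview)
Your proposal is correct and follows essentially the same approach as the paper: integrate \eqref{eq:nonlin_diff} to get the double-integral representation, apply the Lipschitz bound \eqref{eq:lipschitz2}, use convexity of $s\mapsto|\bar z+s\bar w|$ to replace the free-flow term by $\max(|\bar z|,|\bar z+t\bar w|)$, and then close a self-referencing inequality using $(L+2\epsilon\tilde L)t^2\le 1$. The only stylistic difference is that the paper runs the fixed-point argument on the deviation $\max_{s\le t}|\bar z_s-\bar z-s\bar w|$ and then recovers $\max_{s\le t}|\bar z_s|$ by the triangle inequality, whereas you run it directly on $M_t=\max_{s\le t}|\bar z_s|$ and substitute back; both orderings yield the stated constants.
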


\begin{proof}
The proof is postponed to \Cref{sec:proof_estimates}.
\end{proof}

\subsection{Bounds in Region of Strong Convexity for Nonlinear Flow} \label{sec:strongconv}

\begin{lemma} \label{lem:convexarea}
Suppose that \Cref{ass}\ref{ass_Vmin}, \ref{ass_Vlip}, \ref{ass_Vconv} and \ref{ass_Wlip} hold.  Let $T\in(0,\infty)$ and $\epsilon\in[0,\infty)$  satisfy \begin{align} \label{eq:cond_t}
(L+2\epsilon\tilde{L})T^2 \ &\leq \ 1/4, \\
    \epsilon \tilde{L} \ &\le \  K /3 \;. \label{eq:C_eps}
\end{align}
Let $\bar{\mu}$ and $\bar{\mu}'$ be probability measures on $\mathbb{R}^d$ and let $\bar{x},\bar{x}',\bar{v},\bar{v}'\in\mathbb{R}^d$.
Let $(\bar{x}_t,\bar{v}_t)_{t\geq 0}$ and $(\bar{x}_t',\bar{v}_t')_{t\geq 0}$ be two solutions of the distribution-dependent Hamiltonian dynamics given by \eqref{eq:hamdyn_nonl} with initial condition $(\bar{x},\bar{v},\bar{\mu})$ and $(\bar{x}',\bar{v}',\bar{\mu}')$.
If $\bar{v}=\bar{v}'$, then
\begin{align} \label{eq:convexarea}
|\bar{x}_T-\bar{x}_T'|^2 \ \leq \  (1-(5/12)KT^2 )|\bar{x}-\bar{x}'|^2+\epsilon^2\tilde{L}^2T^4\left(\frac{7}{6}+\frac{3}{2KT^2}\right) \max_{s\leq T}\mathbb{E}[|\bar{x}_s-\bar{x}_s'|]^2+ \hat{C}T^2.
\end{align}

\end{lemma}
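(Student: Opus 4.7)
The plan is to analyze $\phi(t) := |\bar{z}_t|^2$ for $\bar{z}_t := \bar{x}_t - \bar{x}_t'$, exploiting the synchronous initialization $\bar{w}_0 := \bar{v} - \bar{v}' = 0$.  Since $\phi'(0) = 2\langle \bar{z}_0,\bar{w}_0\rangle = 0$, Taylor's formula with integral remainder gives
\begin{align*}
|\bar{z}_T|^2 \ = \ |\bar{z}_0|^2 + \int_0^T (T-s)\,\phi''(s)\,\rmd s,
\end{align*}
where $\phi''(s) = 2|\bar{w}_s|^2 - 2\langle \bar{z}_s, G_s\rangle$ with $G_s := (\nabla V(\bar{x}_s) - \nabla V(\bar{x}_s')) + \epsilon\mathcal{I}_s$ and $\mathcal{I}_s$ the mean-field interaction-force difference appearing in \eqref{eq:nonlin_diff}.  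The aim is to extract enough negative-definite structure from $-2\langle \bar{z}_s, G_s\rangle$ to dominate $2|\bar{w}_s|^2$ together with the interaction contribution, producing a bound of the stated form.

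The two main inputs are the asymptotic strong co-coercivity \eqref{eq:V_asymp_stronglyconv}, which yields
\begin{align*}
-2\langle \bar{z}_s, \nabla V(\bar{x}_s) - \nabla V(\bar{x}_s')\rangle \ \leq \ -2K|\bar{z}_s|^2 - (2/L)|\nabla V(\bar{x}_s) - \nabla V(\bar{x}_s')|^2 + 2\hat{C},
\end{align*}
and the Lipschitzness from \Cref{ass}\ref{ass_Wlip}, which via a Kantorovich coupling of $\bar{\mu}_s$ and $\bar{\mu}_s'$ produces $|\mathcal{I}_s| \leq \tilde{L}(|\bar{z}_s| + \W^1(\bar{\mu}_s,\bar{\mu}_s')) \leq \tilde{L}(|\bar{z}_s| + M)$ with $M := \max_{s\leq T}\mathbb{E}[|\bar{z}_s|]$.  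I would then apply Young's inequality in the form $2\epsilon\tilde{L}|\bar{z}_s|\,M \leq (2K/3)|\bar{z}_s|^2 + (3/(2K))\epsilon^2\tilde{L}^2 M^2$, so that together with $\epsilon\tilde{L} \leq K/3$ the interaction is converted into an effective strong convexity of rate $2K/3$ plus a residual $(3/(2K))\epsilon^2\tilde{L}^2 M^2$; integrating the residual against $(T-s)$ over $[0,T]$ then produces precisely the $(3/(2KT^2))\cdot\epsilon^2\tilde{L}^2 T^4 M^2$ piece of the claimed bound.

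To treat $2|\bar{w}_s|^2$, I would write $\bar{w}_s = -\int_0^s G_u\,\rmd u$ and apply Cauchy-Schwarz with a well-chosen Young weight, combined with Fubini and the elementary estimate $\int_u^T (T-s)s\,\rmd s \leq T^2(T-u)/2$.  The resulting $|\nabla V|^2$-contribution is then absorbed by the $-(2/L)(T-s)|\nabla V(\bar{x}_s) - \nabla V(\bar{x}_s')|^2$ term thanks to $LT^2 \leq 1/4$; the resulting $(\epsilon\tilde{L})^2|\bar{z}_u|^2$-contribution is absorbed by the effective $-(2K/3)(T-s)|\bar{z}_s|^2$ term thanks to $\epsilon\tilde{L}\leq K/3$; and what remains is a pure $\epsilon^2\tilde{L}^2 T^4 M^2$-contribution that feeds into the $(7/6)\epsilon^2\tilde{L}^2 T^4 M^2$ piece.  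Finally, to replace $-|\bar{z}_s|^2$ by $-|\bar{z}_0|^2$ plus lower-order corrections, I would use $|\bar{z}_s|^2 \geq |\bar{z}_0|^2 - 2|\bar{z}_0||\bar{z}_s - \bar{z}_0|$ together with \eqref{eq:lem_nonlin5}, which since $\bar{w}_0=0$ simplifies to $|\bar{z}_s - \bar{z}_0| \leq (L+\epsilon\tilde{L})s^2 |\bar{z}_0| + \epsilon\tilde{L}s^2 M$; the resulting residuals fall again inside the $\epsilon^2\tilde{L}^2T^4 M^2$ and $\hat{C}T^2$ terms, while the leading part delivers the $-(5/12)KT^2|\bar{z}_0|^2$ contribution.

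The main obstacle will be the joint tuning of the three Young's inequalities --- one on the $|\bar{z}_s|\cdot M$ cross term, one buried inside the bound on $|\bar{w}_s|^2$, and one in the lower bound on $|\bar{z}_s|^2$ --- so that every error term fits under the hypotheses $(L+2\epsilon\tilde{L})T^2 \leq 1/4$ and $\epsilon\tilde{L}\leq K/3$, and so that the leading coefficient of $KT^2|\bar{z}_0|^2$ collapses to exactly $5/12$ while that of $\epsilon^2\tilde{L}^2 T^4 M^2$ collapses to exactly $7/6$.  Any loss of a factor of two in one of these absorptions propagates into the final constants, so the delicate step is making all the Fubini/Young estimates simultaneously tight.
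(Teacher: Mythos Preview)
Your overall architecture---second-order Taylor for $|\bar z_t|^2$, co-coercivity \eqref{eq:V_asymp_stronglyconv}, Young on the interaction cross term, Cauchy--Schwarz/Fubini for $|\bar w_s|^2$, and an a~priori comparison of $|\bar z_s|$ with $|\bar z_0|$---is sound and uses exactly the same ingredients as the paper. The paper, however, organizes the computation differently: instead of the Taylor kernel $(T-s)$ it writes $\bar a_t=|\bar z_t|^2$ as the solution of $\ddot{\bar a}+K\bar a=\text{source}$ and uses the harmonic-oscillator variation of parameters
\[
\bar a_T \;=\; \cos(\sqrt{K}T)\,\bar a_0 \;+\; \int_0^T \frac{\sin(\sqrt{K}(T-r))}{\sqrt{K}}\,\big(2|\bar w_r|^2-2\langle\bar z_r,\beta_r\rangle+\delta_r\big)\,dr,
\]
with $\delta_r=K\bar a_r-2\epsilon\langle\bar z_r,\Omega_r\rangle$. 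The payoff is that the leading contraction factor $\cos(\sqrt{K}T)\le 1-\tfrac{11}{24}KT^2$ comes for free, and inside the integral the $-2K\bar a_r$ from co-coercivity exactly cancels the $+2K\bar a_r$ produced by bounding $\delta_r$; only a small \emph{positive} multiple of $\bar a_r$ (from $|\Omega_r|^2$) survives, which is then \emph{upper}-bounded via \eqref{eq:lem_nonlin6}. Your route keeps $-K\bar a_s$ as a source term and must instead \emph{lower}-bound $|\bar z_s|^2$ against $|\bar z_0|^2$, which is workable but makes the constant bookkeeping tighter.

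There is one concrete issue with your tuning. With the Young split $2\epsilon\tilde L|\bar z_s|\,M\le \tfrac{2K}{3}|\bar z_s|^2+\tfrac{3}{2K}\epsilon^2\tilde L^2M^2$ and $\epsilon\tilde L\le K/3$, the net coefficient on $|\bar z_s|^2$ in your integrand is at worst $-(2K-2\epsilon\tilde L-\tfrac{2K}{3})\ge-\tfrac{2K}{3}$, so after integrating against $(T-s)$ the leading piece is only $-\tfrac{K}{3}T^2|\bar z_0|^2$, strictly short of $-\tfrac{5}{12}KT^2$. (Also, $\int_0^T(T-s)\tfrac{3}{2K}\,ds=\tfrac{3T^2}{4K}$, not $\tfrac{3T^2}{2K}$, so your identification of the $\tfrac{3}{2KT^2}$ term is off by a factor two.) The fix is to take the Young weight $\alpha=K/3$ instead of $2K/3$: then the residual is $\tfrac{3}{K}\epsilon^2\tilde L^2M^2$, whose integral against $(T-s)$ is exactly $\tfrac{3T^2}{2K}\epsilon^2\tilde L^2M^2$, and the net coefficient on $|\bar z_s|^2$ becomes $-K$, giving leading term $-\tfrac{KT^2}{2}|\bar z_0|^2$; the correction from $|\bar z_s|^2\ge|\bar z_0|^2-2(L+\epsilon\tilde L)s^2|\bar z_0|^2-\dots$ then costs roughly $\tfrac{KT^2}{24}$, landing near $\tfrac{11}{24}$, and the remaining small losses (from the $|\Omega|^2$ contribution to $|\bar w_s|^2$ and the cross term $|\bar z_0|M$) must be squeezed under the $\tfrac{11}{24}-\tfrac{5}{12}=\tfrac{1}{24}$ margin. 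This is doable but fragile; the paper's trig-kernel device sidesteps this balancing act entirely.
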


\begin{proof}
The proof is postponed to \Cref{sec:proof_estimates}.
\end{proof}

\subsection{Uniform-in-steps Second Moment Bound for Nonlinear Flow} \label{sec:secondmoment}

Next, we prove a uniform-in-steps second moment bound for the position component of the distribution-dependent Hamiltonian dynamics, which in turn, implies a corresponding bound for the Markov chain generated by nHMC.

\begin{lemma} \label{lem:secondmoment}
	Suppose $\mathbb{E}[|\bar{x}_0|^2]<\infty$. Suppose \Cref{ass} holds. Let $T\in(0,\infty)$ and $\epsilon\in[0,\infty)$  satisfy \eqref{eq:cond_t} and \eqref{eq:C_eps}. 
	Then there exists a finite constant $\mathbf{B}_1$ depending on $K$, $\epsilon$, $L$, $\mathcal{R}$, $T$, $\mathbb{E}[|\bar{x}_0|^2]$, $\mathbf{W}_0$ and $d$ such that both the second moment of the distribution-dependent Hamiltonian dynamics for $t\leq T$ is uniformly bounded, i.e.,
	\begin{align*}
	\sup_{0 \le t \le T}\mathbb{E}[|\bar{x}_t|^2]\leq \mathbf{B}_1<\infty,
	\end{align*}
	and the second moment of the Markov chain generated by nHMC is uniformly bounded, i.e.,
	\begin{align*}
	\sup_{m\in\mathbb{N}}\mathbb{E}[|\bar{\mathbf{X}}_m|^2]\leq \mathbf{B}_1<\infty.
	\end{align*}
 More precisely, the constant $\mathbf{B}_1$ is bounded by 
 \begin{align} \label{eq:C_1}
     \mathbf{B}_1\le \mathbb{E}[|\bar{x}_0|^2] +  (1280/13K)(\mathcal{R}^2(2L+K)+11d+6(\epsilon \mathbf{W}_0)^2T^2 +(45/2)(\epsilon^2/K) \mathbf{W}_0^2).
 \end{align}
\end{lemma}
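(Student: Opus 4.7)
The plan is to establish a one-step contractive recursion
$\mathbb{E}[|\bar{\mathbf{X}}_{m+1}|^2] \leq (1-\kappa)\mathbb{E}[|\bar{\mathbf{X}}_m|^2] + D$
with $\kappa \propto KT^2$ and some $D$ depending on the remaining constants. Iterating then yields $\sup_m \mathbb{E}[|\bar{\mathbf{X}}_m|^2] \leq \mathbb{E}[|\bar{x}_0|^2] + D/\kappa$, matching the form of \eqref{eq:C_1}. The continuous-time bound $\sup_{0\le t\le T}\mathbb{E}[|\bar{x}_t|^2] \le \mathbf{B}_1$ can then be extracted from Lemma \ref{lem:estimate1} by squaring \eqref{eq:lem_nonlin1} pointwise, taking expectation (using $\mathbb{E}[\max(|\bar x|^2,|\bar x+t\bar v|^2)] \le 2\mathbb{E}|\bar x|^2 + t^2 d$ by independence of $\bar v\sim\mathcal{N}(0,I_d)$), and using $(\max_s\mathbb{E}|\bar x_s|)^2 \le \max_s\mathbb{E}|\bar x_s|^2$ together with $(L+2\epsilon\tilde{L})T^2 \le 1/4$ to solve for $\max_s\mathbb{E}|\bar x_s|^2$; the resulting bound is absorbed into the same $\mathbf{B}_1$ by inflating the numerical constant.

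The one-step contraction is obtained from the representation
\begin{equation*}
\bar{q}_T(\bar x,\bar v,\bar\mu) \;=\; \bar x - \tfrac{T^2}{2}\nabla V(\bar x) - \tfrac{\epsilon T^2}{2}\!\int\!\nabla_1 W(\bar x,z)\,\bar\mu(dz) + T\bar v + \Delta_T,
\end{equation*}
where $\Delta_T = -\int_0^T (T-u)[\nabla U_{\bar\mu_u}(\bar q_u) - \nabla U_{\bar\mu}(\bar x)]\,du$ is an $O(T^3)$ remainder. Setting $A = \bar x - (T^2/2)\nabla V(\bar x)$, the key deterministic estimate is
\begin{equation*}
|A|^2 \;=\; |\bar x|^2 - T^2\langle \bar x,\nabla V(\bar x)\rangle + \tfrac{T^4}{4}|\nabla V(\bar x)|^2 \;\le\; (1-KT^2)|\bar x|^2 + T^2\hat C,
\end{equation*}
which follows from the asymptotic strong co-coercivity \eqref{eq:V_asymp_stronglyconv} applied with $y=0$ (using $\nabla V(0)=0$ from \ref{ass_Vmin}), together with $(T^4/4)-T^2/L_2\le 0$ under $LT^2\le 1/4$.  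Expanding $|\bar q_T|^2 = |A+B|^2 \le (1+\delta)|A|^2 + (1+\delta^{-1})|B|^2$ for some $\delta\propto KT^2$, taking expectation, and using independence of $\bar v$ from $\bar x$ to kill the $\langle A,T\bar v\rangle$ cross term (so $T\bar v$ contributes exactly $T^2 d$), one bounds the $W$-interaction via $|\nabla_1 W(x,u)|\le \tilde{L}(|x|+|u|)+\mathbf{W}_0$ and controls $\mathbb{E}|\Delta_T|^2$ via Lemma \ref{lem:estimate1}.  The condition $\epsilon\tilde{L}\le K/3$ is used here to ensure the $W$-interaction contributes at most $O(\epsilon^2\tilde L^2 T^4)$ to the coefficient of $\mathbb{E}|\bar x|^2$, which is dominated by the contractive gain $KT^2$.

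The main obstacle will be matching the specific numerical constants in \eqref{eq:C_1}.  The factors $1280/13$, $11$, $6$, $45/2$ emerge from a delicate balance of Young's inequality parameters, the precise use of $LT^2\le 1/4$ versus $\epsilon\tilde{L}\le K/3$, and the explicit form $\hat C=(2L+K)\mathcal{R}^2$.  The two separate $\mathbf{W}_0^2$-terms (one scaling with $T^2$, one with $1/K$) reflect a genuine distinction in the computation: the $T^2$-term captures the direct drift contribution of $|\nabla_1 W(0,0)|$ over a single step, while the $1/K$-term captures the ``equilibrium shift'' $\epsilon\mathbf{W}_0/K$ that must be introduced via a $K$-weighted Young's inequality to absorb linear terms of the form $\epsilon\mathbf{W}_0|\bar x|$ into the quadratic contraction. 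The bookkeeping is tedious but requires no new conceptual ingredients beyond Lemmas \ref{lem:estimate1}--\ref{lem:convexarea} and the co-coercivity \eqref{eq:V_asymp_stronglyconv}.
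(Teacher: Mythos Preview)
Your one-step Taylor expansion is natural, but there is a real gap in how you propose to handle the remainder $\Delta_T$. Applying Young's inequality $|A+B|^2\le (1+\delta)|A|^2+(1+\delta^{-1})|B|^2$ with $\delta\propto KT^2$ makes the term $(1+\delta^{-1})\mathbb{E}|\Delta_T|^2$ dangerous. Tracing through Lemma~\ref{lem:estimate1}, the remainder carries not only the $O(LT^3|\bar v|)$ contribution you isolate but also an $O(L^2T^4|\bar x|)$ piece, coming from $|\bar q_u-\bar x|\lesssim u|\bar v|+L u^2(|\bar x|+T|\bar v|)$ inside the Lipschitz bound for $\nabla U_{\bar\mu_u}(\bar q_u)-\nabla U_{\bar\mu_0}(\bar x)$. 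Squaring and dividing by $\delta$ produces a coefficient of order $L^4T^6/K=(LT^2)^3(L/K)$ in front of $\mathbb{E}|\bar x|^2$. Since only $LT^2\le 1/4$ is assumed and $L/K$ is unrestricted, this is not absorbed by the $KT^2$ gain from $|A|^2$, and your recursion $\mathbb{E}|\bar q_T|^2\le (1-\kappa)\mathbb{E}|\bar x|^2+D$ breaks down when $L\gg K$. The same obstruction reappears if one tries to bound the cross term $\langle A,\Delta_T\rangle$ directly.

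The paper circumvents this by a variation-of-parameters argument mirroring the proof of Lemma~\ref{lem:convexarea}: one writes $\bar a_t=|\bar x_t|^2$ as $c_t\bar a_0+\int_0^t s_{t-r}\bigl(2|\bar v_r|^2-2\rho_r+\delta_r\bigr)\,dr$ with $c_t=\cos(\sqrt{K}t)$, $s_t=\sin(\sqrt{K}t)/\sqrt{K}$, and $\rho_r=\langle\bar x_r,\nabla V(\bar x_r)\rangle$. Co-coercivity \eqref{eq:V_asymp_stronglyconv} is applied \emph{at the running point} $\bar x_r$, producing a $-(2/L)|\nabla V(\bar x_r)|^2$ term. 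The velocity $|\bar v_r|^2$ is rewritten via the equation of motion as $8|\bar v_0|^2$ plus an integral dominated by $8T^2|\nabla V(\bar x_s)|^2$; under \eqref{eq:cond_t} one has $8T^2\le 2/L$, so this force-squared contribution is exactly absorbed by the co-coercivity term. This cancellation is what keeps the $d$-coefficient in \eqref{eq:C_1} equal to $11$ rather than $O(L/K)$, and it is not available from an endpoint expansion anchored at $\bar x_0$. The missing conceptual ingredient is therefore not bookkeeping but the use of running co-coercivity to cancel the velocity-induced $|\nabla V|^2$ terms.
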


\subsection{Proofs of estimates for the nonlinear Hamiltonian flow }\label{sec:proof_estimates}

To prove the aforementioned estimates for the nonlinear Hamiltonian flow,  note that by \Cref{ass}\ref{ass_Vlip} and \ref{ass_Wlip}, for all $\bar{x},\bar{x}'\in\mathbb{R}^d$ and $\eta,\eta'\in\mathcal{P}(\mathbb{R}^d)$, 
\begin{align}
& |\nabla U_{\eta}(\bar{x})|\leq L |\bar{x}|+\epsilon\tilde{L}(|\bar{x}|+\mathbb{E}_{x\sim \eta} [|x|])+\epsilon \mathbf{W}_0, \quad \text{and,} \label{eq:lipschitz1}
\\ & |\nabla U_{\eta}(\bar{x})-\nabla U_{\eta'}(\bar{x}')|\leq (L+\epsilon\tilde{L}) |\bar{x}-\bar{x}'|+\epsilon\tilde{L}\mathbb{E}_{{x}\sim \eta,{x}'\sim \eta'} [|x-{x}'|] \label{eq:lipschitz2}
\end{align} 
where $\mathbf{W}_0$ is the constant defined in \eqref{eq:nablaW}.

\begin{proof}[Proof of \Cref{lem:estimate1}]
Fix $\bar{x},\bar{v}\in\mathbb{R}^d$ and the probability measure $\bar{\nu}_0=\bar{\mu}_0\otimes\mathcal{N}(0,I_d)$ on $\mathbb{R}^{2d}$. Let $s\leq t$. For all $0\leq u\leq t$, set $\bar{\mu}_u(x)=\int_{\mathbb{R}^d}\bar{\nu}_u(x,\rmd y)$ and $\mathbb{E}[|\bar{x}_u|]=\mathbb{E}_{x\sim \bar{\mu}_u}[|x|]$. Then by \eqref{eq:hamdyn_nonl},
\begin{align*}
\bar{x}_s-\bar{x}-s\bar{v}=\int_0^s \int_0^{\lfloor r\rfloor} \Big(-\nabla U_{\bar{\mu}_u}(x_{u})\Big) \rmd u \ \rmd r. 
\end{align*}
By \eqref{eq:lipschitz1},
\begin{align*}
|\bar{x}_s-\bar{x}-s\bar{v}|\leq \frac{(L+\epsilon\tilde{L})t^2}{2}\max_{r\leq t}(|\bar{x}_r-\bar{x}-r\bar{v}|+|\bar{x}+r\bar{v}|)+\frac{\epsilon\tilde{L}t^2}{2}\max_{r\leq t} \mathbb{E}[|\bar{x}_r|]+\frac{t^2}{2}\epsilon\mathbf{W}_0.
\end{align*}
By invoking condition \eqref{eq:t_cond}, we obtain
\begin{align*}
\max_{s\leq t}|\bar{x}_s-\bar{x}-s\bar{v}|&\leq (L+\epsilon\tilde{L})t^2\max_{s\leq t} |\bar{x}+s\bar{v}|+\epsilon\tilde{L}t^2\max_{s\leq t}\mathbb{E}[|\bar{x}_s|]+\epsilon t^2\mathbf{W}_0
\\ & =(L+\epsilon\tilde{L})t^2\max(|\bar{x}|,|\bar{x}+t\bar{v}|)+\epsilon\tilde{L}t^2\max_{s\leq t}\mathbb{E}[|\bar{x}_s|]+\epsilon t^2\mathbf{W}_0.
\end{align*}
By applying the triangle inequality we obtain \eqref{eq:lem_nonlin1}. By \eqref{eq:hamdyn_nonl} and \eqref{eq:lipschitz1},
\begin{align*} 
|\bar{v}_s-\bar{v}|\leq \int_0^s \max_{u\leq t}|\nabla U_ {\bar{\mu}_u}(\bar{x}_u)|\rmd r\leq (L+\epsilon\tilde{L})t \max_{u\leq t}|\bar{x}_u|+\epsilon\tilde{L}t\max_{u\leq t} \mathbb{E}[|\bar{x}_u|]+\epsilon t\mathbf{W}_0.
\end{align*}
By inserting \eqref{eq:lem_nonlin1} in the previous expression, we obtain
\begin{align*}
|\bar{v}_s-\bar{v}|&\leq (L+\epsilon\tilde{L})t(1+(L+\epsilon\tilde{L})t^2)\max(|\bar{x}|,|\bar{x}+t\bar{v}|)
 +\epsilon\tilde{L}t(1+(L+2\epsilon\tilde{L})t^2)\max_{u\leq t}\mathbb{E}[|\bar{x}_u|]
 \\ & + \epsilon t(1+(L+\epsilon\tilde{L})t^2)\mathbf{W}_0.
\end{align*}
Applying the triangle inequality we obtain \eqref{eq:lem_nonlin2}. To obtain \eqref{eq:lem_nonlin3} and \eqref{eq:lem_nonlin4}, we note that by \eqref{eq:hamdyn_nonl},
\begin{align*}
\mathbb{E}[|\bar{x}_s-\bar{x}-s\bar{v}|]&\leq \int_0^s\int_0^r \mathbb{E}[|\nabla U_{\bar{\mu}_u}(\bar{x}_{u})|]\rmd u \rmd r 
 \leq \frac{(L+2\epsilon\tilde{L})t^2}{2}\max_{r\leq t}\mathbb{E}[|\bar{x}_r|]+ \frac{\epsilon t^2}{2}\mathbf{W}_0.
\end{align*}
Similarly, we obtain the estimate \eqref{eq:lem_nonlin1},
\begin{align*}
\max_{s\leq t} \mathbb{E}[|\bar{x}_s-\bar{x}-s\bar{v}|] &\leq (L+2\epsilon\tilde{L})t^2\max_{r\leq t}\mathbb{E}[|\bar{x}+r\bar{v}|]+\epsilon t^2\mathbf{W}_0
\\ & \leq (L+2\epsilon\tilde{L})t^2\mathbb{E}[\max(|\bar{x}+t\bar{v}|,|\bar{x}|)]+\epsilon t^2\mathbf{W}_0.
\end{align*}
By applying the triangle inequality, \eqref{eq:lem_nonlin3} is obtained. By \eqref{eq:hamdyn_nonl} and \eqref{eq:lem_nonlin3},
\begin{align*}
\mathbb{E}[|\bar{v}_s-\bar{v}|]\leq (L+2\epsilon\tilde{L})t \max_{r\leq t} \mathbb{E}[|\bar{x}_r|]&\leq (L+2\epsilon\tilde{L})t(1+(L+2\epsilon\tilde{L})t^2)\mathbb{E}[\max(|\bar{x}|,|\bar{x}+t\bar{v}|)]
\\ & +\epsilon t(1+(L+\epsilon\tilde{L})t^2)\mathbf{W}_0
\end{align*}
and \eqref{eq:lem_nonlin4} is obtained by the triangle inequality.
\end{proof}

\begin{proof}[Proof of \Cref{lem:estimate2}]
By \eqref{eq:nonlin_diff} and \eqref{eq:lipschitz2},
\begin{align*}
|\bar{z}_s-\bar{z}-s\bar{w}|
 &\leq \int_0^s \int_0^r \max_{v\leq t}|-\nabla U_{\bar{\mu}_v}(\bar{x}_v)+\nabla U_{\bar{\mu}_v'}(\bar{x}'_v)|\rmd u \rmd r
\\ & \leq \frac{(L+\epsilon\tilde{L})t^2}{2}\max_{r\leq t}|\bar{z}_r|+\frac{\epsilon\tilde{L}t^2}{2}\max_{r\leq t}\mathbb{E}[|\bar{z}_r|].
\end{align*}
Similar to the proof of \Cref{lem:estimate1}, it holds
\begin{align*}
\max_{s\leq t} |\bar{z}_s-\bar{z}-s\bar{w}|\leq (L+\epsilon\tilde{L})t^2\max(|\bar{z}+t\bar{w}|,|\bar{z}|)+\epsilon\tilde{L}t^2\max_{s\leq t}\mathbb{E}[|\bar{z}_s|],
\end{align*}
which yields \eqref{eq:lem_nonlin5} and after applying the triangle inequality \eqref{eq:lem_nonlin6}. By \eqref{eq:nonlin_diff} and \eqref{eq:lipschitz2},
\begin{align*}
|\bar{w}_s-\bar{w}|\leq \int_0^s\max_{v\leq t}|-\nabla U_{\bar{\mu}_v}(\bar{x}_v)+\nabla U_{\bar{\mu}_v'}(\bar{x}'_v)|\rmd u \rmd r\leq (L+\epsilon\tilde{L})t \max_{r\leq t}|\bar{z}_r|+\epsilon\tilde{L}t\max_{r\leq t}\mathbb{E}[|\bar{z}_r|].
\end{align*}
By \eqref{eq:lem_nonlin6},
\begin{align*}
\max_{s\leq t} |\bar{w}_s-\bar{w}|& \leq (L+\epsilon\tilde{L})t(1+(L+2\epsilon\tilde{L})t^2)\max(|\bar{z}|,|\bar{z}+t\bar{w}|)
 +\epsilon\tilde{L}t(1+(L+\epsilon\tilde{L})t^2\max_{s\leq t}\mathbb{E}[|\bar{z}_s|], 
\end{align*}
and hence \eqref{eq:lem_nonlin7} and \eqref{eq:lem_nonlin8} hold. Equation \eqref{eq:lem_nonlin9} and \eqref{eq:lem_nonlin10} hold analogously by taking the expectation.
\end{proof}

\begin{proof}[Proof of \Cref{lem:convexarea}]
It is notationally convenient to define \begin{align*}
\bar{z}_t & := \bar{x}_t - \bar{x}'_t, \quad \bar{w}_t := \bar{v}_t - \bar{v}'_t, \\
\beta_t &:= \nabla V(\bar{x}_t) - \nabla V(\bar{x}'_t),  \quad \Omega_t :=\int_{\mathbb{R}^d} \left( \nabla_1 W(\bar{x}_t, y) - \nabla_1 W(\bar{x}'_t,y') \right) \rmd \bar{\mu}_t(y) \rmd \bar{\mu}'(y') \;.
\end{align*}
Therefore, $\nabla U_{\bar{\mu}_t}(\bar{x}_t) - \nabla U_{\bar{\mu}'_t}(\bar{x}'_t) = \beta_t + \epsilon \Omega_t$.   Let $\bar{a}_t := \norm{\bar{z}_t}^2$ and  $\bar{b}_t := 2 \langle \bar{z}_t, \bar{w}_t \rangle$.  Note that $\rho_t := \langle \bar{z}_t, \beta_t \rangle$ satisfies \begin{equation} \label{ieq:rhot}
K \bar{a}_t + \frac{1}{L} \norm{\beta_t}^2 - \hat{C} \overset{\ref{ass_Vconv}}{\le}  \rho_t \overset{\ref{ass_Vlip}}{\le}  L \bar{a}_t
\end{equation}
From \eqref{eq:hamdyn_nonl}, note that \begin{align} 
		\begin{cases}		
		\frac{d}{dt} \bar{z}_t \, = \bar{w}_t ,
		\\ \frac{d}{dt} \bar{w}_t \, = \, -  \beta_t - \epsilon \Omega_t \;, 
		\\ \bar{\mu}_t=\Law(\bar{x}_t)\, \quad \bar{\mu}'=\Law(\bar{x}_t') \; ,
		\end{cases} \label{eq:hamdyn_nonl_diff}
\end{align}
and hence, \begin{equation}
    \begin{aligned}
        \frac{d}{dt} \bar{a}_t := \bar{b}_t, \quad \frac{d}{dt} \bar{b}_t = 2 |\bar{w}_t|^2 - 2 \langle \bar{z}_t, \beta_t \rangle - 2 \epsilon \langle \bar{z}_t, \Omega_t \rangle \;. 
    \end{aligned}
\end{equation}
By variation of parameters, \begin{equation} \label{vop_bar_at} 
    \bar{a}_T = c_T \bar{a}_0 + \int_0^T s_{T-r} \left( 2 \norm{\bar{w}_r}^2 - 2 \rho_r + \delta_r \right) dr
\end{equation} 
where $\delta_t := K \bar{a}_t  - 2 \epsilon \langle \bar{z}_t, \Omega_t \rangle$, $c_t:=\cos(\sqrt{ K} t )$, and $s_t := \frac{\sin( \sqrt{ K} t )}{ \sqrt{ K}} $.  
To upper bound  $\delta_t$  in \eqref{vop_bar_at},  
\begin{align}    \label{eq:delta} 
& \delta_t  \  \le \  K \bar{a}_t  + 2 \epsilon \norm{\bar{z}_t}\norm{\Omega_t} \ \overset{\ref{ass_Wlip}}{\le}  K \bar{a}_t + 2 \epsilon \norm{\bar{z}_t}\tilde{L}(\norm{\bar{z}_t}+ \mathbb{E}[\norm{\bar{z}_t}]) \ \overset{\eqref{eq:C_eps}}{\le}   2K \bar{a}_t + \frac{3\epsilon^2\tilde{L}^2}{K} \max_{r\le T} \mathbb{E}[\norm{\bar{z}_r}]^2 \  \qquad \text{for }t\le T, 
\end{align}
where $\mathbb{E}[\norm{\bar{z}_r}]=\int_{\mathbb{R}^{2d}} |y-y'| \rmd \bar{\mu}_r(y)\rmd \bar{\mu}_r'(y')$ for any $r\in[0,T]$.

To upper bound the terms involving $|\bar{w}_t|^2$ in \eqref{vop_bar_at},  use \eqref{eq:hamdyn_nonl_diff} and note that $\bar{w}_0=0$, since the initial velocities in the two copies are synchronized.  Therefore, \begin{align} \label{eq:Wt2}
 |\bar{w}_t|^2 \, = \,   \norm{  \int_0^{t}   \beta_s + \epsilon \Omega_s  ds }^2 \ \, \le \,  t  \int_0^{t} \norm{\beta_s + \epsilon \Omega_s  }^2 ds
		\, \le \,   t  \int_0^{t} \left(8\norm{\beta_s}^2 + \epsilon^2 \frac{8}{7}\norm{ \Omega_s  }^2\right) ds 
\end{align}
where we used the Cauchy-Schwarz inequality.  By \eqref{eq:cond_t}, note that $s_{t-r}$ is monotonically decreasing with $r$,  \begin{equation} \label{eq:sinus}
0 \le s_{t-r} \le s_{t-s} \;, \quad \text{for $s \le r \le t \le T \le \frac{\pi/2}{\sqrt{ K}} $} \;. \end{equation} 
Therefore, combining \eqref{eq:Wt2} and \eqref{eq:sinus}, and by Young's inequality and Fubini's Theorem,   
\begin{align}
& \int_0^t s_{t-r} |\bar{w}_r|^2 dr  \, \overset{\eqref{eq:Wt2}}{\le} \,
 2 \int_0^t \int_0^r  r s_{t-r} \left(4\norm{\beta_s}^2 + \epsilon^2 \frac{4}{7} \norm{ \Omega_s  }^2\right)  ds dr \nonumber
 \\ & \qquad \, \overset{\eqref{eq:sinus}}{\le} \,
 2 \int_0^t \int_0^r  r s_{t-s}  \left(4\norm{\beta_s}^2 + \epsilon^2 \frac{4}{7} \norm{ \Omega_s  }^2\right) ds dr
 \le \,  2 \int_0^t \int_s^t  r s_{t-s}  \left(4\norm{\beta_s}^2 + \epsilon^2 \frac{4}{7} \norm{ \Omega_s  }^2\right)  dr ds  \, 
 \nonumber \\
& \qquad \
 \le \,  t^2  \int_0^t   s_{t-s} \left( 4 \norm{\beta_s}^2 + \epsilon^2 \frac{4}{7} \norm{ \Omega_s }^2\right)  ds \;. \label{ieq:Wt2}
\end{align} 
To upper bound the mean-field interaction force,   
\begin{align}  & \norm{\Omega_t}^2    \ = \ \norm{ \int_{\mathbb{R}^{2d}} \nabla_1 W(\bar{x}_t,y) - \nabla_1 W(\bar{x}'_t,y) \rmd \bar{\mu}_t(y) \rmd \bar{\mu}_t'(y') }^2  \nonumber \\
& \quad \le \left( \  \int_{\mathbb{R}^{2d}} \norm{ \nabla_1 W(\bar{x}_t,y) - \nabla_1 W(\bar{x}'_t,y) } \rmd \bar{\mu}_t(y) \rmd \bar{\mu}_t'(y')  \right)^2     \nonumber  \\
& \quad \overset{\ref{ass_Wlip}}{\le} \left( \tilde{L} ( \norm{ \bar{z}_t} + \mathbb{E}[\norm{\bar{z}_t}] ) \right)^2      \nonumber \\
& \quad \le 2 \tilde{L}^2 \bar{a}_t + 2 \tilde{L}^2 \max_{s\le t}\mathbb{E}[\norm{\bar{z}_s}]^2 .
\label{ieq:Omt}
\end{align}
We note that by \eqref{eq:lem_nonlin6}, \eqref{eq:cond_t} and Young's inequality, it holds
\begin{align}
\epsilon \tilde{L}T^2 \max_{t\le T} \bar{a}_t &\le \epsilon \tilde{L}t^2\left( \frac{5}{4} \norm{\bar{z}_0} + \epsilon\tilde{L}T^2 \max_{s\le T}\mathbb{E}[\norm{\bar{z}_s}]\right)^2 \le \epsilon \tilde{L}t^2 \left(  \frac{7}{5}\left(\frac{5}{4}\right)^2 \bar{a}_0 + \frac{7}{2} (\epsilon\tilde{L}T^2)^2 \max_{s\le T}\mathbb{E}[\norm{\bar{z}_s}]^2 \right)  \nonumber
\\ &{\le}  \frac{7}{64} \bar{a}_0 + \frac{7\epsilon \tilde{L}T^2}{2\cdot 20^2} \max_{s\le T}\mathbb{E}[\norm{\bar{z}_s}]^2 \label{eq:bar_a2}.
\end{align}
where we used that by \eqref{eq:cond_t} and \eqref{eq:C_eps}
\begin{align} \label{eq:cond_t_conseq}
\epsilon\tilde{L}t^2\leq (1/5)(K+2\epsilon\tilde{L})\leq (1/5)(L+2\epsilon\tilde{L})\leq (1/20)\qquad \text{for } t\le T.
\end{align}

 Inserting these bounds into the second term of \eqref{vop_bar_at} and simplifying yields 
\begin{align}
&  \int_0^T s_{T-r} \left( 2 \norm{\bar{w}_r}^2 - 2 \rho_r + \delta_r \right) dr  \nonumber  \\
&  \quad \overset{\eqref{eq:delta}}{\le}  \int_0^T s_{T-r} \left( 2 \norm{\bar{w}_r}^2  - 2 \rho_r + 2 K \bar{a}_r + \frac{3\epsilon^2\tilde{L}^2}{K} \max_{s\le T} \mathbb{E}[\norm{\bar{z}_s}]^2  \right) dr \nonumber \\
& \quad \overset{\eqref{ieq:Wt2}}{\le} \int_0^T s_{T-r} \left( T^2 \left(8\norm{\beta_r}^2 + \epsilon^2 \frac{8}{7}  \norm{ \Omega_r }^2\right)    - 2 \rho_r + 2K  \bar{a}_r + \frac{3\epsilon^2\tilde{L}^2}{K} \max_{s\le T} \mathbb{E}[\norm{\bar{z}_s}]^2   \right)  dr \nonumber \\
& \quad \overset{\eqref{ieq:Omt}}{\le} \int_0^T s_{T-r} \left( T^2 \left(8 \norm{\beta_r}^2 +  \frac{16}{7} \epsilon^2\tilde{L}^2 (\bar{a}_r + \max_{s\le r}\mathbb{E}[\norm{\bar{z}_s}]^2)\right)    - 2 \rho_r + 2K  \bar{a}_r  + \frac{3\epsilon^2\tilde{L}^2}{K} \max_{s\le T} \mathbb{E}[\norm{\bar{z}_s}]^2   \right)  dr \nonumber \\
& \quad \overset{\eqref{ieq:rhot}}{\le}  \int_0^T s_{T-r} \left( T^2 \left(8 \norm{\beta_r}^2 +  \frac{16}{7}  \epsilon^2\tilde{L}^2 (\bar{a}_r +  \max_{s\le r}\mathbb{E}[\norm{\bar{z}_s}]^2)\right)  +2 \hat{C} -\frac{2}{L} \norm{\beta_r}^2  + \frac{3\epsilon^2\tilde{L}^2}{K} \max_{s\le T} \mathbb{E}[\norm{\bar{z}_s}]^2   \right)  dr \nonumber \\
& \quad \overset{\eqref{eq:cond_t}}{\le}  \int_0^T s_{T-r} \left( T^2 \left( \frac{16}{7} \epsilon^2\tilde{L}^2 \bar{a}_r + \frac{16}{7} \epsilon^2 \tilde{L}^2 \max_{s\le r}\mathbb{E}[\norm{\bar{z}_s}]^2\right)  + 2\hat{C}  + \frac{3\epsilon^2\tilde{L}^2}{K} \max_{s\le T} \mathbb{E}[\norm{\bar{z}_s}]^2   \right)  dr \nonumber \\
& \quad \overset{\eqref{eq:bar_a2}, \eqref{eq:cond_t_conseq} }{\le}    \int_0^T s_{T-r} \left( \frac{\epsilon \tilde{L}}{4} \bar{a}_0 + \left(\frac{(\epsilon \tilde{L} T)^2}{50}  + \frac{16(\epsilon\tilde{L} T)^2}{7}\right) \max_{s\le T}\mathbb{E}[\norm{\bar{z}_s}]^2 +   2\hat{C}   + \frac{3\epsilon^2\tilde{L}^2}{K} \max_{s\le T} \mathbb{E}[\norm{\bar{z}_s}]^2   \right)  dr \nonumber \\
& \quad \overset{\eqref{eq:cond_t}, \eqref{eq:C_eps}}{\le}   T^2 \hat{C} + \frac{\epsilon \tilde{L}T^2}{8} \bar{a}_0+  \epsilon^2\tilde{L}^2T^4\left(\frac{7}{6}+\frac{3}{2KT^2}\right) \max_{s\le T}\mathbb{E}[\norm{\bar{z}_s}]^2
\nonumber 
\end{align}
where in the last step we used $s_{T-r} \le T-r$. 
Inserting this upper bound back into \eqref{vop_bar_at} yields  
\begin{align*}
 \bar{a}_T  \, \le \, c_T \bar{a}_0 + T^2 \hat{C} +  \frac{\epsilon \tilde{L}T^2}{8} \bar{a}_0+  \epsilon^2\tilde{L}^2T^4\left(\frac{7}{6}+\frac{3}{2KT^2}\right) \max_{s\le T}\mathbb{E}[\norm{\bar{z}_s}]^2 \;. 
\end{align*} 
The required estimate is then obtained by inserting the elementary inequality \begin{align}
c_T &\, \le \, 1 - (1/2) K T^2 + (1/6)  K^2 T^4 \, \overset{\eqref{eq:cond_t}}{\le} \, 1 - (1/2) K T^2 + (1/24)  K T^2 (K/L)  \le 1 - (1/2 - 1/24) K T^2 \label{eq:bound_c_T}
\end{align} which is valid since $K \le L$.  The required result then holds because $1/2 - 1/24 -1/(3\cdot 8) = 5/12$.
\end{proof}


\begin{proof}[Proof of Lemma~\ref{lem:secondmoment}] 

The proof somewhat resembles the proof of \Cref{lem:convexarea} except to bound the second moment the difference is replaced by a single component. Define 
\begin{align*}
 \Theta_t :=\int_{\mathbb{R}^d} \nabla_1 W(\bar{x}_t, y) \rmd \bar{\mu}_t(y) \;.
\end{align*}
Then, $\nabla U_{\bar{\mu}_t}(\bar{x}_t) = \nabla V(\bar{x}_t) + \epsilon \Theta_t$.  
Furthermore, let $\bar{a}_t := \norm{\bar{x}_t}^2$ and  $\bar{b}_t := 2 \langle \bar{x}_t, \bar{v}_t \rangle$.  Note that $\rho_t := \langle \bar{x}_t, \nabla V(\bar{x}_t) \rangle$ satisfies \begin{equation} \label{ieq:rhot_moment}
K \bar{a}_t + \frac{1}{L} \norm{\nabla V(\bar{x}_t)}^2 - \hat{C} \overset{\ref{ass_Vconv}}{\le}  \rho_t \overset{\ref{ass_Vlip}}{\le}  L \bar{a}_t .
\end{equation}
From \eqref{eq:hamdyn_nonl},  
\begin{equation}
    \begin{aligned}
        \frac{d}{dt} \bar{a}_t := \bar{b}_t, \quad \frac{d}{dt} \bar{b}_t = 2 |\bar{v}_t|^2 - 2 \langle \bar{x}_t, \nabla V(\bar{x}_t) \rangle - 2 \epsilon \langle \bar{x}_t, \Theta_t \rangle \;. 
    \end{aligned}
\end{equation}
By variation of parameters, for $t\le T$ \begin{equation} \label{vop_bar_at_moment} 
    \bar{a}_t = c_t \bar{a}_0 + \int_0^t s_{t-r} \left( 2 \norm{\bar{v}_r}^2 - 2 \rho_r + \delta_r \right) dr
\end{equation} 
where $\delta_r := K \bar{a}_r  - 2 \epsilon \langle \bar{x}_r, \Theta_r \rangle$, $c_t:=\cos(\sqrt{ K} t )$, and $s_t := \frac{\sin( \sqrt{ K} t )}{ \sqrt{ K}} $.  
We bound  $\delta_t$  in \eqref{vop_bar_at_moment} from above by   
\begin{equation} \label{eq:delta_moment} 
\begin{aligned}    
\delta_t  \  &\le \  K \bar{a}_t  + 2 \epsilon \norm{\bar{x}_t}\norm{\Theta_t} \ \overset{\ref{ass_Wlip}}{\le}  K \bar{a}_t + 2 \epsilon \norm{\bar{x}_t}\tilde{L}(\norm{\bar{x}_t}+ \mathbb{E}[\norm{\bar{x}_t}] +\mathbf{W}_0/\tilde{L}) \ 
\\ & \overset{\eqref{eq:C_eps}}{\le} \  2K \bar{a}_t + \epsilon\tilde{L} \max_{r\le T} \mathbb{E}[\norm{\bar{x}_r}]^2+ K/45 \bar{a}_t + \epsilon^2 45/K\mathbf{W}_0^2  \  \qquad \text{for }t\le T, 
\end{aligned}
\end{equation}
where $\mathbb{E}[\norm{\bar{x}_r}]=\int_{\mathbb{R}^{d}} |y| \rmd \bar{\mu}_r(y)$ for any $r\in[0,T]$.

To upper bound the terms involving $|\bar{v}_t|^2$ in \eqref{vop_bar_at_moment},  use \eqref{eq:hamdyn_nonl}.   Therefore, \begin{align} \label{eq:Wt2_moment}
 |\bar{v}_t|^2 &\, = \,   \norm{  -\int_0^{t}  ( \nabla V(\bar{x}_s) + \epsilon \Theta_s)  ds + \bar{v}_0}^2 \ \, \le \,  8 \norm{\bar{v}_0}^2 + \frac{8}{7} t  \int_0^{t} \norm{ \nabla V(\bar{x}_s) + \epsilon \Theta_s}^2 ds \nonumber \\
&		\, \le \, 8 \norm{\bar{v}_0}^2  + t  \int_0^{t}  \left(8\norm{\nabla V(\bar{x}_s)}^2 + \frac{4}{3}\epsilon^2 \norm{ \Theta_s  }^2\right) ds 
\end{align}
where we used the Cauchy-Schwarz inequality.  
Therefore, combining \eqref{eq:Wt2_moment} and \eqref{eq:sinus}, and by Fubini's Theorem,   
\begin{align}
& \int_0^t s_{t-r} |\bar{v}_r|^2 dr  \, \overset{\eqref{eq:Wt2_moment}}{\le} \,
  2 \int_0^t \int_0^r  r s_{t-r} \left(4\norm{\nabla V(\bar{x}_s)}^2 + \frac{2}{3}\epsilon^2 \norm{ \Theta_s  }^2\right)  ds dr + 8\int_0^t s_{t-r} |\bar{v}_0|^2 dr \nonumber
 \\ & \qquad \ 
 \overset{\eqref{eq:sinus}}{\le} \,
 2 \int_0^t \int_0^r  r s_{t-s}  \left(4\norm{\nabla V(\bar{x}_s)}^2 + \frac{2}{3}\epsilon^2 \norm{ \Theta_s  }^2\right) ds dr + 8\int_0^t s_{t-r} |\bar{v}_0|^2 dr
\nonumber \\
& \qquad \ \le \,  2 \int_0^t \int_s^t  r s_{t-s} \left(4\norm{\nabla V(\bar{x}_s)}^2 + \frac{2}{3}\epsilon^2 \norm{ \Theta_s  }^2\right)  dr ds + 8\int_0^t s_{t-r} |\bar{v}_0|^2 dr
\nonumber \\ & \qquad \, \le \,  t^2  \int_0^t   s_{t-s} \left(4\norm{\nabla V(\bar{x}_s)}^2 + \frac{2}{3}\epsilon^2 \norm{ \Theta_s  }^2\right)  ds + 8\int_0^t s_{t-r} |\bar{v}_0|^2 dr \;. \label{ieq:Wt2_moment}
\end{align} 
To upper bound the mean-field interaction force,   
\begin{align}  & \norm{\Theta_t}^2    \ = \ \norm{ \int_{\mathbb{R}^{d}} \nabla_1 W(\bar{x}_t,y)  \rmd \bar{\mu}_t(y)  }^2  \quad \le \Big( \  \int_{\mathbb{R}^{d}} \norm{ \nabla_1 W(\bar{x}_t,y) } \rmd \bar{\mu}_t(y)   \Big)^2     \nonumber  \\
& \quad \overset{\ref{ass_Wlip}}{\le} \Big( \tilde{L} ( \norm{ \bar{x}_t} + \mathbb{E}[\norm{\bar{x}_t}] ) + \mathbf{W}_0 \Big)^2      \quad \le 2 \tilde{L}^2 \bar{a}_t + 3 \tilde{L}^2 \max_{s\le t}\mathbb{E}[\norm{\bar{x}_s}]^2 + 6 \mathbf{W}_0^2.
\label{ieq:Omt_moment}
\end{align}

 Inserting these bounds into the second term of \eqref{vop_bar_at_moment} and simplifying yields 
\begin{align}
&  \int_0^t s_{t-r} \left( 2 \norm{\bar{v}_r}^2 - 2 \rho_r + \delta_r \right) dr  \nonumber  \\
&  \quad \overset{\eqref{eq:delta_moment}}{\le}  \int_0^t s_{t-r} \left( 2 \norm{\bar{v}_r}^2  - 2 \rho_r + 2K\bar{a}_r + \epsilon\tilde{L} \max_{s\le T} \mathbb{E}[\norm{\bar{x}_s}]^2+ \frac{K}{45} \bar{a}_r + \frac{\epsilon^2 45}{K}\mathbf{W}_0^2  \right) dr \nonumber \\
& \quad \overset{\eqref{ieq:Wt2_moment}}{\le} \int_0^t s_{t-r} \left( T^2 \left(8\norm{\nabla V(\bar{x}_r)}^2 + \frac{4}{3}\epsilon^2 \norm{ \Theta_r  }^2\right)    - 2 \rho_r + 2K  \bar{a}_r + \epsilon\tilde{L} \max_{s\le T} \mathbb{E}[\norm{\bar{x}_s}]^2  + \frac{K}{45} \bar{a}_r  \right)  dr \nonumber \\
&\qquad + \int_0^t s_{t-r}\Big( \frac{\epsilon^2 45}{K}\mathbf{W}_0^2+16 |\bar{v}_0|^2\Big) dr \nonumber \\
& \quad \overset{\eqref{ieq:Omt_moment}, \eqref{ieq:rhot_moment}}{\le} \int_0^t s_{t-r} \left( T^2 \left(8\norm{\nabla V(\bar{x}_r)}^2 + \frac{4}{3} \epsilon^2 \left( 2 \tilde{L}^2 \bar{a}_r + 3 \tilde{L}^2 \max_{s\le r}\mathbb{E}[\norm{\bar{x}_s}]^2 + 6 \mathbf{W}_0^2 \right) \right)  +  2\hat{C}-\frac{2}{L}\norm{\beta_r}^2    \right)  dr \nonumber \\
&\qquad + \int_0^t s_{t-r} \left(  \epsilon\tilde{L} \max_{s\le T} \mathbb{E}[\norm{\bar{x}_s}]^2 + \frac{K}{45} \bar{a}_r + \frac{ \epsilon^2 45}{K}\mathbf{W}_0^2+16 |\bar{v}_0|^2 \right) dr \nonumber \\
& \quad \overset{\eqref{eq:cond_t}}{\le}  \int_0^t s_{t-r} \left( T^2 \left( \frac{4}{3} \epsilon^2 \left( 2 \tilde{L}^2 \bar{a}_r + 3 \tilde{L}^2 \max_{s\le T}\mathbb{E}[\norm{\bar{x}_s}]^2 + 6 \mathbf{W}_0^2 \right) \right)  +  2\hat{C}  \right)  dr \nonumber \\
&\qquad + \int_0^t s_{t-r} \left(  \epsilon\tilde{L} \max_{s\le T} \mathbb{E}[\norm{\bar{x}_s}]^2 + \frac{K}{45} \bar{a}_r + \frac{\epsilon^2 45}{K}\mathbf{W}_0^2+16 |\bar{v}_0|^2 \right) dr \nonumber \\
& \quad \overset{\eqref{eq:cond_t}}{\le} \frac{t^2}{2}\left(\left(\epsilon \tilde{L} \frac{2 }{15}+\frac{K}{45}\right)\max_{s\le T}\bar{a}_s + \epsilon \tilde{L}\frac{6}{5}\max_{s\le T}\mathbb{E}[\norm{x_s}]^2+  2\hat{C}+  \frac{\epsilon^2 45}{K}\mathbf{W}_0^2+ 8\epsilon^2 T^2 \mathbf{W}_0^2 + 16 \norm{\bar{v}_0}^2 \right) \label{eq:s_T_moment}
\end{align}
where in the last step we used $s_{t-r} \le t-r$. 
Inserting this upper bound back into \eqref{vop_bar_at_moment}, using \eqref{eq:C_eps} and taking expectation yields   
\begin{align}
& \mathbb{E} [\bar{a}_t]  \, \le \, c_t \mathbb{E}[\bar{a}_0] +  \frac{t^2}{2}\left(K\frac{1 }{15}\mathbb{E}[\max_{s\le T}\bar{a}_s] + K\frac{2}{5}\max_{s\le T}\mathbb{E}[\norm{x_s}]^2+ \left( \frac{45 \epsilon^2}{K}+8 \epsilon^2 T^2 \right) \mathbf{W}_0^2  + 16 \mathbb{E}[\norm{\bar{v}_0}^2] \right) 
+  t^2\hat{C} \nonumber
\\ & \quad \, \le \, (1 - (11/24) K t^2)\mathbb{E}[\bar{a}_0] +  \frac{t^2}{2}\left(K \frac{1}{15}\mathbb{E}[\max_{s\le T}\bar{a}_s] + K\frac{2}{5}\max_{s\le T}\mathbb{E}[\norm{x_s}]^2 + 16 \mathbb{E}[\norm{\bar{v}_0}^2] \right)  \nonumber
\\ & \qquad + t^2\left( \frac{45\epsilon^2}{2K}+4\epsilon^2 T^2 \right)  \mathbf{W}_0^2 +  t^2\hat{C} \;, \label{eq:expa_moment}
\end{align} 
where in the last step we used the inequality 
\begin{align}
c_t &\, \le \, 1 - (1/2) K t^2 + (1/6)  K^2 t^4 \, \overset{\eqref{eq:cond_t}}{\le} \, 1 - (1/2) K t^2 + (1/24)  K t^2 (K/L)  \le 1 - (1/2 - 1/24) K t^2 \label{eq:bound_c_T_moment}
\end{align} which is valid since $K \le L$.  
We note that by \eqref{eq:lem_nonlin1}, \eqref{eq:lem_nonlin4} and \eqref{eq:cond_t}, it holds
\begin{align}
& \mathbb{E}[ \max_{s\le T} \bar{a}_s] \le \mathbb{E} \left[ \left( \frac{5}{4} \norm{\bar{x}_0} + \frac{5}{4}T \norm{\bar{v}_0}+ \epsilon\tilde{L}T^2 \max_{s\le T}\mathbb{E}[\norm{\bar{x}_s}]+\epsilon T^2 \mathbf{W}_0 \right)^2 \right] \nonumber
\\ & \quad {\le} \mathbb{E}\left[ \left( \frac{5}{4} \norm{\bar{x}_0} + \frac{5}{4}T \norm{\bar{v}_0}+ \frac{1}{20} \left(\frac{5}{4} \mathbb{E}[\norm{\bar{x}_0}+T\norm{\bar{v}_0}] +\epsilon T^2 \mathbf{W}_0 \right) +\epsilon T^2 \mathbf{W}_0 \right)^2 \right] \nonumber
\\ & \quad \le  \mathbb{E}\left[ 2 \left( \frac{5}{4} \norm{\bar{x}_0}+ \frac{1}{20}\frac{5}{4} \mathbb{E}[\norm{\bar{x}_0}] \right)^2 + 4\left( \frac{5}{4}T \norm{\bar{v}_0}+ \frac{1}{16}  T\mathbb{E}[\norm{\bar{v}_0}] \right)^2+4\left(\frac{21}{20}\epsilon T^2 \mathbf{W}_0 \right)^2 \right] \nonumber
\\ & \quad \le  2 \left( \frac{5}{4}\right)^2 \frac{441}{400} \mathbb{E}[\bar{a}_0] + 4 \left( \frac{5}{4}\right)^2 \frac{441}{400} T^2 \mathbb{E}[\norm{\bar{v}_0}^2]  + 4 \left(\frac{21}{20}\epsilon T^2 \mathbf{W}_0 \right)^2 \quad \text{and} \label{eq:bar_a2_moment} \\
    & \max_{s\le T}\mathbb{E}[\norm{\bar{x}_s}]^2\le \left( \frac{5}{4} \mathbb{E}[\norm{\bar{x}_0}]+\frac{5}{4} T\mathbb{E}[\norm{\bar{v}_0}] +\epsilon T^2 \mathbf{W}_0 \right)^2  \nonumber \\ 
    & \qquad \le \frac{16}{15} \left(\frac{5}{4}\right)^2 \mathbb{E}[\bar{a}_0]+ 32\left(\frac{5}{4}\right)^2T^2\mathbb{E}[\norm{\bar{v}_0}^2]+ 32\epsilon^2 T^4\mathbf{W}_0^2 \;. \label{eq:bar_Ea2_moment}
\end{align}
Inserting these estimates into \eqref{eq:expa_moment} and using \eqref{eq:C_eps} and \eqref{eq:cond_t} yields
\begin{align}
    & \mathbb{E} [\bar{a}_t]  \, \le  \, \left(1 - \frac{11}{24} K t^2\right)\mathbb{E}[\bar{a}_0] +  \frac{K t^2 }{6} \frac{1}{5} \left( 2 \left( \frac{5}{4}\right)^2 \frac{441}{400} \mathbb{E}[\bar{a}_0] + 4 \left( \frac{5}{4}\right)^2 \frac{441}{400} T^2 \mathbb{E}[\norm{\bar{v}_0}^2]  + 4 \left(\frac{21}{20}\epsilon T^2 \mathbf{W}_0 \right)^2 \right) \nonumber
    \\ & \qquad \qquad  + \frac{Kt^2}{6} \frac{6}{5}\left(\frac{16}{15} \left(\frac{5}{4}\right)^2 \mathbb{E}[\bar{a}_0]+ 32\left(\frac{5}{4}\right)^2T^2\mathbb{E}[\norm{\bar{v}_0}^2]+ 32\epsilon^2 T^4\mathbf{W}_0^2 \right) + 8t^2 \mathbb{E}[\norm{\bar{v}_0}^2]   \nonumber
\\ & \qquad \qquad + t^2\left( \frac{45\epsilon^2}{2K}+4\epsilon^2 T^2 \right)  \mathbf{W}_0^2 +  t^2\hat{C} \nonumber
\\ & \quad  = \left(1-\left(\frac{11}{24}+\frac{147}{1280} +\frac{1}{3}\right) Kt^2\right) \mathbb{E} [\bar{a}_0] + \left(\left(\frac{147}{640}+10 \right)KT^2+8\right)t^2\mathbb{E}[\norm{\bar{v}_0}^2] \nonumber
\\ & \qquad +\frac{327}{50}Kt^2\epsilon^2T^4  \mathbf{W}_0^2 + t^2\left( \frac{45\epsilon^2}{2K}+4\epsilon^2 T^2 \right)  \mathbf{W}_0^2 +  t^2\hat{C} \nonumber
\\ & \quad \le \left(1-\frac{13}{1280} Kt^2\right) \mathbb{E} [\bar{a}_0] + 11t^2d + t^2\left( \frac{45\epsilon^2}{2K}+6\epsilon^2 T^2 \right)  \mathbf{W}_0^2 +  t^2\hat{C}\;. \label{eq:bound_aT_moment} 
\end{align}

By Gr\"{o}nwall's Inequality, there exists a uniform-in-time second moment bound $\mathbf{B}_1$ for both the nonlinear Hamiltonian dynamics $\mathbb{E}[\norm{\bar{x}_t}^2]$ and the second moment of the measure after arbitrarily many nHMC steps. 
In particular, $\mathbf{B}_1$ depends on $T$, $K$, $L$,  $\epsilon$, $\mathbb{E}[|\bar{x}_0|^2]$, $d$, $\mathbf{W}_0$ and $\mathcal{R}$ and can be chosen of the form given in \eqref{eq:C_1}.
\end{proof}

\section{Global Contractivity of uHMC with Time Integrator Randomization}

In this section, \cite[Theorem 3.1]{BouRabeeSchuh2023} is adapted to the uHMC transition step in \eqref{eq:transstep_meanf}.  The key difference is that the transition step now uses a randomized time integrator in place of the standard Verlet integrator.

\begin{theorem} \label{thm:contr_uHMC}
Suppose \Cref{ass} \ref{ass_Vmin}, \ref{ass_Vlip}, \ref{ass_Vconv} and \ref{ass_Wlip} hold.   Let $\epsilon \in [0, \infty)$ satisfy \begin{align}
    \epsilon \tilde{L} \ \le \  \min\left( \frac{K}{3},\frac{125 K }{624 \sqrt{7+1/(KT^2)}}\exp\left(-\frac{5\tilde{R}}{2T}-5\right) \right) \;. \label{eq:Cepsi}
\end{align} Let $T \in (0,\infty)$   satisfy \begin{align}
( L + 2 \epsilon \tilde{L} ) T^2 & \, \le \,  \min\left(\frac{1}{9},\frac{1}{36^2 L\tilde{R}^2}\right) \;, \label{eq:CT} \end{align}
and $h \ge 0$ satisfy $T/h \in \mathbb{Z}$ if $h>0$. 
Then for all $x, y \in \mathbb{R}^{N d}$ \begin{align}
    \mathbb{E}[ \rho_N( \mathbf{X}_h(x,y) , \mathbf{Y}_h(x,y) ) ] \ &\le \  (1 - c) \rho_N(x,y) \quad  \text{where} \quad  c \  = \ \frac{K T^2}{156} \exp( - \frac{\tilde{R}}{T} ) \;. \label{eq:c_meanf}  
\end{align}
\end{theorem}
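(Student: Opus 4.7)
The plan is to adapt the coupling argument used in the proof of \Cref{thm:contr_nonlinear} to the $N$-particle mean-field setting, closely following the strategy of \cite[Theorem 3.1]{BouRabeeSchuh2023} but replacing the exact Hamiltonian flow with the randomized time integrator flow $(Q_t, P_t)$ defined in \eqref{eq:hamdyn_meanf_num}. Concretely, for each particle index $i \in \{1, \dots, N\}$, I would use the \emph{particle-wise} coupling of initial velocities $(\xi^i, \eta^i)$ defined exactly as in \eqref{eq:coupl_transstep}: if $|x^i - y^i| \ge \tilde{R}$ apply the synchronous coupling, and otherwise apply the shifted/reflection coupling. The coupled transition step is then $\mathbf{X}_h(x,y) = Q_T(x,\xi)$ and $\mathbf{Y}_h(x,y) = Q_T(y,\eta)$, with the randomization variables $\{\mathcal{U}_k\}$ shared between the two copies. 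The goal is to show, for each $i$, a per-particle one-step contraction inequality of the form $\mathbb{E}[f(|Q_T^i(x,\xi) - Q_T^i(y,\eta)|)] \le (1-c) f(|x^i - y^i|) + (\text{interaction error}_i)$ where the interaction errors, when averaged over $i$, are absorbed using $\epsilon \tilde{L}$-smallness, and then sum over $i$ and divide by $N$.

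The first step is to establish the discrete-time analogs of the deviation/non-expansion estimates in \Cref{lem:estimate1,lem:estimate2} for the randomized integrator flow applied to the $N$-particle potential $U$ in \eqref{eq:U_meanf}. Writing $z_t^i = Q_t^i(x,\xi) - Q_t^i(y,\eta)$ and $w_t^i = P_t^i(x,\xi) - P_t^i(y,\eta)$, the coupled system satisfies a discrete-time version of \eqref{eq:nonlin_diff} with driving force $-\nabla_i U(Q_t^\star(x,\xi)) + \nabla_i U(Q_t^\star(y,\eta))$. The Lipschitz bound \eqref{eq:lipschitz2}, applied componentwise, yields bounds on $\max_{s \le T} |z_s^i|$ and $\max_{s \le T} |w_s^i|$ of the same structural form as \eqref{eq:lem_nonlin5}–\eqref{eq:lem_nonlin8} but with the empirical average $N^{-1} \sum_{j=1}^N |z_s^j|$ replacing $\mathbb{E}[|\bar{z}_s|]$; crucially, the condition \eqref{eq:CT} is the discrete analog of \eqref{eq:t_cond} and absorbs constants coming from $Q_t^\star \neq Q_t$ (the randomized evaluation point differs from the current state by at most $h|P_{\lfloor t\rfloor}|$).

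Next, I would prove the discrete analog of \Cref{lem:convexarea} in the synchronous-coupling region $|x^i-y^i| \ge \tilde{R}$: using asymptotic strong co-coercivity \ref{ass_Vconv}, a variation-of-parameters / discrete Gr\"onwall argument yields $\mathbb{E}[|z_T^i|^2] \le (1-(5/12)KT^2)|x^i-y^i|^2 + (\text{mean-field term in } N^{-1}\sum_j |z_s^j|^2) + \hat{C}T^2$. In the reflection-coupling region, I would decompose $\mathbb{E}[f(R^i) - f(r^i)]$ into the three pieces $\rn{1}$, $\rn{2}$, $\rn{3}$ exactly as in the proof of \Cref{thm:contr_nonlinear}, apply the TV bound \eqref{eq:TV_bound}, the concavity of $f$, and the integrator analogs of \eqref{eq:lem_nonlin5}–\eqref{eq:lem_nonlin6}. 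Summing over $i$, averaging, and invoking the mean-field analog of \eqref{eq:expec_z_s} to control $N^{-1}\sum_i |z_s^i|$ by $N^{-1}\sum_i f(|x^i-y^i|)/f'(R_1)$, the weak interaction condition \eqref{eq:Cepsi} allows the interaction terms to be absorbed into the contraction, producing \eqref{eq:c_meanf}. The slightly worse constants in \eqref{eq:CT}, \eqref{eq:Cepsi} compared with \eqref{eq:cond_T}, \eqref{eq:cond_eps} reflect the loss from replacing $Q_t$ by $Q_t^\star$ in the force evaluations.

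The main obstacle will be obtaining the sharp discrete analog of \Cref{lem:convexarea}: the randomized integrator destroys the exact cosine/sine variation-of-parameters identity used in \eqref{vop_bar_at}, so I will need to carefully track the perturbation caused by evaluating $\nabla_i U$ at $Q_t^\star$ rather than $Q_t$, ensuring that its expectation (over $\mathcal{U}_k$) contributes only a higher-order term in $h$ that does not degrade the $(5/12)KT^2$ contraction coefficient beyond what \eqref{eq:CT} permits. A secondary technical point is verifying that the bound $(L+2\epsilon\tilde{L})T^2 \le 1/9$ in \eqref{eq:CT} (together with $h \le T$) is strong enough to control the geometric-series blow-up of the step-by-step error propagation; this is the place where the factor $1/9$ (versus $1/4$ in \eqref{eq:cond_t}) appears.
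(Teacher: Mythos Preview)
Your proposal is correct and follows essentially the same route as the paper: particle-wise coupling, discrete a-priori deviation bounds for the randomized flow (the paper's \Cref{lem:aprioriZW}), a discrete analog of \Cref{lem:convexarea} (the paper's \Cref{lem:convexarea_meanf}), and the three-term $\rn{1}+\rn{2}+\rn{3}$ decomposition in the reflection region. One small correction to your anticipated ``main obstacle'': in the paper the discrete strong-co-coercivity lemma is proved \emph{almost surely}, not by taking expectation over $\mathcal{U}_k$; the perturbation $Z_t^i - Z_t^{\star,i} = (t-\lfloor t\rfloor - h\mathcal{U}_{\lfloor t\rfloor/h}) W_{\lfloor t\rfloor}^i - \tfrac12(t-\lfloor t\rfloor)^2(\beta_t^{\star,i}+\epsilon\Omega_t^{\star,i})$ is bounded pointwise using $|t-\lfloor t\rfloor - h\mathcal{U}_{\lfloor t\rfloor/h}| \le h$ and absorbed via $Lh^2 \le LT^2 \le 1/9$, so no expectation-based cancellation is required.
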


\begin{proof}
The proof is somewhat similar to the proof of \Cref{thm:contr_nonlinear} and the proof of \cite[Theorem 3.1]{BouRabeeSchuh2023}, except now the underlying flow is discretized and the underlying potential force is evaluated at a random midpoint. We write $R^i=|\mathbf{X}_h(x,y)^i-\mathbf{Y}_h(x,y)^i|$ and $r^i=|x^i-y^i|$. For $r^i\ge \tilde{R}$, the initial velocities in the $i$th component are synchronized, i.e., $\xi^i=\eta^i$. By \Cref{lem:convexarea_meanf}, \eqref{eq:R1} and concavity of $f$, we obtain
\begin{align}
    \mathbb{E}[f(R^i)-f(r^i)] &\le f(r^i)\mathbb{E}[R^i-r^i] \nonumber
    \\ & \le f'(r^i)\left(-\frac{1}{16}\right)KT^2 r^i+f'(r^i)\epsilon\tilde{L}T^2 \sqrt{7+\frac{1}{KT^2}}\mathbb{E}\left[ \frac{1}{N}\sum_{l=1}^N\sup_{s\le T}|Z_s^l|+h\sup_{s\le T}|W_s^l|\right]. \label{eq:largedist_meanf}
\end{align}
For $r^i<\tilde{R}$, we split
\begin{align} \label{eq:split_meanf}
    \mathbb{E}[f(R^i)-f(r^i)]& =\mathbb{E}[f(R^i)-f(r^i); \{w^i=-\gamma z^i\}]+\mathbb{E}[f(R^i\wedge R_1)-f(r^i); \{w^i\neq-\gamma z^i\}] \nonumber
    \\ & +\mathbb{E}[f(R^i)-f(R^i\wedge R_1); \{w^i\neq -\gamma z^i\}] 
\end{align}
where $z^i=x^i-y^i$ and $w^i=\xi^i-\eta^i$
and bound each term separately. Note that as in \eqref{eq:TV_bound}, it holds
\begin{align}
    \mathbb{P}[w^i\neq -\gamma z^i]=\|\mathcal{N}(0,I_d)-\mathcal{N}(\gamma z^i,I_d)\|_{\mathrm{TV}}< 1/10.
\end{align}
For the first term in \eqref{eq:split_meanf}, we observe by \eqref{eq:gamma}, \eqref{eq:CT} and \eqref{apriori:ZiWidev} 
\begin{align*}
    R^i& \le |z^i-Tw^i| + \frac{9}{5}(L+\epsilon \tilde{L}) T^2 r^i+\frac{9}{5}(L+\epsilon \tilde{L}) T^2 T\gamma r^i 
    \\ & \qquad + \frac{9\epsilon\tilde{L} T^2}{5N}\sum_{l=1}^N\left(|z^l|+2 T|w^l|+\sup_{s\le T}|Z_s^l-z^l-sw^l|+h\sup_{s\le T}|W_s^l-w^l|\right)
    \\ & \le (1-\gamma T) r^i + \frac{2}{5}\gamma T r^i + \frac{9\epsilon\tilde{L} T^2}{5N}\sum_{l=1}^N\left(|z^l|+2 T|w^l| +\sup_{s\le T}|Z_s^l-z^l-sw^l|+h\sup_{s\le T}|W_s^l-w^l|\right)
\end{align*}
where $Z_s^l=Q_s(x,\xi)-Q_s(y,\eta)$ and $W_s^l=P_s(x,\xi)-P_s(y,\eta)$.
By concavity of $f$,
\begin{align}
    & \mathbb{E}[f(R^i)-f(r^i); \{w^i=-\gamma z^i\}]\le -f'(r^i)\frac{9}{10}\frac{3}{5}\gamma Tr^i \nonumber
    \\ &  \qquad + f'(r^i)\frac{9\epsilon\tilde{L} T^2}{5N}\mathbb{E}\left[\sum_{l=1}^N\left(|z^l|+2 T|w^l| +\sup_{s\le T}|Z_s^l-z^l-sw^l|+h\sup_{s\le T}|W_s^l-w^l|\right); \{w^i= -\gamma z^i\}\right]. \nonumber
\end{align}
Analogously to \eqref{eq:boundII}, we obtain
\begin{align*}
    \mathbb{E}[f(R^i\wedge R_1)-f(r^i); \{w^i\neq-\gamma z^i\}]\le T f'(r^i) \mathbb{P}[w^i\neq -\gamma z^i]\le f'(r^i)(2/5)\gamma T r^i.
\end{align*}
For $\mathbb{E}[f(R^i)-f(R^i\wedge R_1); \{w^i\neq -\gamma z^i\}]$, we observe by \eqref{apriori:ZiWidev} and since $w^i=2(e^i\cdot\xi^i)e^i$
\begin{align*}
    R^i& \le (1+\frac{9}{5}(L+\epsilon \tilde{L})T^2)|z^i|+(1+\frac{18}{5}(L+\epsilon \tilde{L})T^2)2|e^i\cdot\xi^i|
    \\ & \quad  + \frac{9\epsilon\tilde{L} T^2}{5N}\sum_{l=1}^N\left(|z^l|+2 T|w^l| +\sup_{s\le T}|Z_s^l-z^l-sw^l|+h\sup_{s\le T}|W_s^l-w^l|\right)
\end{align*}
and hence by \eqref{eq:CT} and concavity of $f$
\begin{align*}
    &\mathbb{E}[(f(R^i)- f(R_1))^+; \{w^i\neq -\gamma z^i\}] \le f'(R_1)\mathbb{E}\left[\left(\frac{6}{5}r^i+ \frac{7}{5}T|\xi^i\cdot e^i| -R_1 \right)^+ ;\{w^i\neq -\gamma z^i\}\right]
     \\ & \quad  + f'(R_1) \frac{9\epsilon\tilde{L} T^2}{5N}\mathbb{E}\left[\sum_{l=1}^N\left(|z^l|+2 T|w^l| +\sup_{s\le T}|Z_s^l-z^l-sw^l|+h\sup_{s\le T}|W_s^l-w^l|\right); \{w^i\neq -\gamma z^i\}\right].
\end{align*}
For the third term in \eqref{eq:split_meanf}, it holds similarly to \eqref{eq:boundIII}
\begin{align}
    & \mathbb{E}\left[\left(\frac{6}{5}r^i+ \frac{7}{5}T|\xi^i\cdot e^i| -R_1 \right)^+ ;\{w^i\neq -\gamma z^i\}\right]  = \int_{-\gamma r^i/2}^{\gamma r^i/2} \left(\frac{6}{5}r^i+ \frac{7}{5}T|u| -R_1 \right)^+ \frac{e^{-\frac{u^2}{2}}}{\sqrt{2\pi}}\rmd u \nonumber
    \\ & \qquad +  \int_{\gamma r^i/2}^{\infty} \left(\left(\frac{6}{5}r^i+ \frac{7}{5}T|u| -R_1 \right)^+- \left(\frac{6}{5}r^i+ \frac{7}{5}T|u-\gamma r^i| -R_1 \right)^+ \right)\frac{e^{-\frac{u^2}{2}}}{\sqrt{2\pi}}\rmd u \nonumber
    \\ & \quad \le \int_{\gamma r^i/2}^{\infty} \frac{7}{5} T\gamma r^i \frac{e^{-\frac{u^2}{2}}}{\sqrt{2\pi}}\rmd u \le \frac{7}{10}T\gamma r^i \nonumber
\end{align}
where \eqref{eq:R1} is used in the last step. 
Hence, we obtain 
\begin{align*}
    &\mathbb{E}[(f(R^i)- f(R_1))^+; \{w^i\neq -\gamma z^i\}] \le f'(R_1) \frac{7}{10} T\gamma r^i
     \\ & \quad  + f'(R_1) \frac{9\epsilon\tilde{L} T^2}{5N}\mathbb{E}\left[\sum_{l=1}^N\left(|z^l|+2 T|w^l| +\sup_{s\le T}|Z_s^l-z^l-sw^l|+h\sup_{s\le T}|W_s^l-w^l|\right); \{w^i\neq -\gamma z^i\}\right].
\end{align*}
Since $f'(R_1)/f'(\tilde{R})\le 1/12$, it holds
\begin{align*}
    &\mathbb{E}[(f(R^i)- f(R_1))^+; \{w^i\neq -\gamma z^i\}] \le f'(r^i) \frac{7}{120} T\gamma r^i
     \\ & \quad  + f'(r^i) \frac{3\epsilon\tilde{L} T^2}{20N}\mathbb{E}\left[\sum_{l=1}^N\left(|z^l|+2 T|w^l| +\sup_{s\le T}|Z_s^l-z^l-sw^l|+h\sup_{s\le T}|W_s^l-w^l|\right); \{w^i\neq -\gamma z^i\}\right].
\end{align*}
Combining the estimates for the three terms in \eqref{eq:split_meanf}, we obtain
\begin{align}
    & \mathbb{E}[f(R^i)-f(r^i)]\le -f'(r^i)\frac{9}{10}\frac{3}{5}\gamma Tr^i + f'(r^i)\frac{2}{5}\gamma T r^i+ f'(r^i) \frac{7}{120} T\gamma r^i \nonumber
    \\ &  \qquad + f'(r^i)\frac{9\epsilon\tilde{L} T^2}{5N}\mathbb{E}\left[\sum_{l=1}^N\left(|z^l|+2 T|w^l| +\sup_{s\le T}|Z_s^l-z^l-sw^l|+h\sup_{s\le T}|W_s^l-w^l|\right)\right] \nonumber
    \\ & \quad \le -f(r^i) \frac{49}{600} \gamma T r^i + \frac{9\epsilon\tilde{L} T^2}{5N}\mathbb{E}\left[\sum_{l=1}^N\left(|z^l|+2 T|w^l| +\sup_{s\le T}|Z_s^l-z^l-sw^l|+h\sup_{s\le T}|W_s^l-w^l|\right)\right] . \label{eq:smalldist_meanf}
\end{align}
We observe that by \eqref{eq:CT} and \eqref{apriori:ZiWidev}, the last term in \eqref{eq:largedist_meanf} and \eqref{eq:smalldist_meanf} are both bounded by 
\begin{align*}
& \mathbb{E}\left[ \frac{1}{N}\sum_{l=1}^N\sup_{s\le T}|Z_s^l|+h\sup_{s\le T}|W_s^l|\right] 
\\ & \qquad \le \mathbb{E}\left[\sum_{l=1}^N\left(|z^l|+2 T|w^l|) +\sup_{s\le T}|Z_s^l-z^l-sw^l|+h\sup_{s\le T}|W_s^l-w^l|\right)\right] \le \frac{6}{5}\mathbb{E}\left[\sum_{l=1}^N (|z^l|+2T|w^l|)\right].
\end{align*}
Using this estimate and combining \eqref{eq:largedist_meanf} and \eqref{eq:smalldist_meanf},
\begin{align}
    &\mathbb{E}[f(R^i)-f(r^i)]\le -f'(r^i) \min\left(\frac{1}{16}KT^2,\frac{49}{600} \gamma T\right) r^i \nonumber
    \\ & \qquad + \epsilon\tilde{L}T^2\max\left( \sqrt{7+\frac{1}{KT^2}} ,3\right)\frac{6}{5N}\mathbb{E}\left[\sum_{l=1}^N (|z^l|+2T|w^l|)\right]. \nonumber
\end{align}
By \eqref{eq:gamma} and \eqref{eq:CT}, the minimum is attained at $\frac{1}{16}KT^2$ and the maximum at $\sqrt{7.5+\frac{1}{KT^2}}$. 
We observe
\begin{align*}
    &\mathbb{E}[\norm{w^l}]=\mathbb{E}[\norm{w^l};\{r^l<\tilde{R}\}]= \1_{r^l<\tilde{R}}(|z^l|\gamma \mathbb{P}[w^l=-\gamma z^l]+\mathbb{E}[|w^l|; w^l\neq -\gamma z^l])
    \\ & \qquad \le \norm{z^l}\gamma + \1_{r^l<\tilde{R}}\int_{-\gamma r^l/2}^{\infty} \frac{2|u|}{\sqrt{2\pi}}\left(\exp\left(-\frac{u^2}{2}\right)-\exp\left(-\frac{(u+\gamma r^l)^2}{2}\right) \right) 
    \rmd u
    \\ & \qquad \le \norm{z^l}\gamma + \1_{r^l<\tilde{R}}\left(\int_{-\gamma r^l/2}^{\gamma r^l/2} \frac{2|u|}{\sqrt{2\pi}}\exp\left(-\frac{u^2}{2}\right)\rmd u +\int_{\gamma r^l/2}^{\infty} \frac{2\gamma r^l}{\sqrt{2\pi}}\exp\left(-\frac{u^2}{2}\right) 
    \rmd u \right)
    \\ & \qquad \le \norm{z^l}\gamma + \1_{r^l<\tilde{R}}\left( \frac{(\gamma r^l)^2}{\sqrt{2\pi}} + \gamma r^l\right) \overset{\eqref{eq:gamma}}{\le}  \frac{21}{10}r^l\gamma.
\end{align*}
Then, by \eqref{eq:estimate_f}, it holds
\begin{align}
   &\mathbb{E}[f(R^i)-f(r^i)]\le -\frac{5}{4}\left(\tilde{R}/T+2\right)\exp\left(-\frac{5\tilde{R}}{4T}-\frac{5}{2}\right) \frac{1}{16}KT^2 f(r^i) \nonumber
    \\ & \qquad + \epsilon\tilde{L}T^2 \sqrt{7+\frac{1}{KT^2}} \exp\left(\frac{5\tilde{R}}{4T}+\frac{5}{2}\right) \frac{6}{5}\frac{26}{5}\mathbb{E}\left[\sum_{l=1}^N f(r^l)\right]. \nonumber
\end{align}
Taking the sum over all particles, we obtain for $\epsilon$ satisfying \eqref{eq:C_eps}
\begin{align}
    \mathbb{E}\left[\frac{1}{N}\sum_{i=1}^N f(R^i)\right]\le (1-c)\mathbb{E}\left[\frac{1}{N}\sum_{i=1}^N f(r^i)\right] \nonumber
\end{align}
for $c$ given in \eqref{eq:c_meanf}.
\end{proof}

\subsection{Preliminaries}

From \eqref{eq:U_meanf} and \Cref{ass}, note that for all $x, y \in \mathbb{R}^{N d}$ and $i \in \{1, \dots, N \}$, \begin{align}
\norm{\nabla_i U(x) - \nabla_i U(y)}  \, &\overset{\ref{ass_Vlip},\ref{ass_Wlip}}{\le} \,  (L + \epsilon \tilde{L}) \norm{x^i - y^i} + \frac{\epsilon \tilde{L} }{N} \sum_{\ell = 1}^N  \norm{x^{\ell} - y^{\ell}}  , \label{eq:L_i_mf} \\
\norm{\nabla_i U(x)} \  &\overset{\eqref{eq:L_i_mf},\ref{ass_Vmin}}{\le} \  (L + \epsilon \tilde{L}) \norm{x^i} + \frac{\epsilon \tilde{L} }{N} \sum_{\ell = 1}^N  \norm{x^{\ell}} + \epsilon \mathbf{W}_0  ,  \label{eq:G_i_mf} \\
\langle x^1 - y^1, \nabla V(x^1) - \nabla V(y^1) \rangle  \, &\overset{\ref{ass_Vconv},\ref{ass_Vlip}}{\ge} \,  K \norm{x^1 - y^1}^2 + \frac{1}{L} \norm{\nabla V(x^1) - \nabla V(y^1)}^2 - \hat{C} \;.
\label{eq:K_i}
\end{align}

\subsection{Deviation from Free Flow}

Here we develop estimates on the numerical flow $(Q_t(x,v), P_t(x,v))$ that solves \eqref{eq:hamdyn_meanf_num}.  

\begin{lemma} \label{lem:aprioriQV}
Suppose \Cref{ass} \ref{ass_Vmin}, \ref{ass_Vlip},  and \ref{ass_Wlip} hold. 
Let $\epsilon \in [0, \infty)$ and $T \in (0,\infty)$   satisfy \eqref{eq:Cepsi} and \eqref{eq:CT}.  Let $L_e = L+2 \epsilon \tilde{L}$.  Let $h \ge 0$ satisfy $T/h \in \mathbb{Z}$ if $h>0$. 
For any $x,v\in \mathbb{R}^{N d}$ and $i \in \{1, \dots, N\}$,
	\begin{align}
	\begin{split} \label{apriori:QiVidev}
& \sup_{s \le T} \norm{Q_s^i(x,v) - x^i - s v^i}
+ L_e^{-\frac{1}{2}}\sup_{s \le T} \norm{P_s^i(x,v) - v^i} \, \le \,  3 (L + \epsilon \tilde{L} ) T L_e^{-\frac{1}{2}} \left( \norm{x^i} + 2 T \norm{v^i} \right) +  L_e^{-1} \epsilon \mathbf{W}_0  \\ 
&  + \frac{3 \epsilon \tilde{L} T}{N} L_e^{-\frac{1}{2}}\sum_{\ell=1}^N \big( \norm{x^{\ell}} + 2 T \norm{v^{\ell}} +  \sup_{s \le T} \norm{Q_s^{\ell}(x,v) - x^{\ell} - s v^{\ell}}
+ L_e^{-\frac{1}{2}} \sup_{s \le T} \norm{P_s^{\ell}(x,v) - v^{\ell}} \big) 
\end{split} \\
  \begin{split} 
&  \sum_{\ell=1}^N  \big( \sup_{s \le T} \norm{Q_s^{\ell}(x,v) - x^{\ell} - s v^{\ell}} + L_e^{-\frac{1}{2}}\sup_{s \le T} \norm{P_s^{\ell}(x,v)-v^{\ell}} \big) 
\\
& \qquad \le   N  L_e^{-1}   \epsilon  \mathbf{W}_0  +  6 L_e^{\frac{1}{2}} T \sum_{\ell=1}^N \big( \norm{x^{\ell}} + L_e^{-\frac{1}{2}} \norm{v^{\ell}} \big). \label{apriori:QVdev} 
 	\end{split} 
 	\end{align}

\end{lemma}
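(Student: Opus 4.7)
The plan is to convert the ODE system \eqref{eq:hamdyn_meanf_num} into an implicit inequality for the deviations from free flow, then absorb self-referential terms using the smallness condition \eqref{eq:CT}. Integrating once gives $P_s^i - v^i = -\int_0^s \nabla_i U(Q_u^\star(x,v))\,du$ and integrating twice gives $Q_s^i - x^i - s v^i = -\int_0^s\!\int_0^r \nabla_i U(Q_u^\star(x,v))\,du\,dr$. Taking the weighted sup-norm on $[0,T]$ and applying the triangle inequality yields
\begin{align*}
D_i \ := \ \sup_{s \le T}\norm{Q_s^i - x^i - sv^i} + L_e^{-1/2}\sup_{s \le T}\norm{P_s^i - v^i} \ \le \ \Bigl(\tfrac{T^2}{2} + L_e^{-1/2} T\Bigr)\sup_{u \le T}\norm{\nabla_i U(Q_u^\star(x,v))},
\end{align*}
and since $L_e T^2 \le 1/9$ by \eqref{eq:CT}, the prefactor on the right is bounded by a small constant multiple of $L_e^{-1/2} T$.

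Next I would bound $\norm{\nabla_i U(Q_u^\star)}$ via the growth estimate \eqref{eq:G_i_mf} applied at the random evaluation point $Q_u^\star = Q_{\lfloor u\rfloor} + h\,\mathcal{U}_{\lfloor u\rfloor/h}\, P_{\lfloor u\rfloor}$. Since $\mathcal{U}_k \in [0,1]$, and since the constraint $T/h \in \mathbb{Z}$ with $h>0$ forces $h \le T$, the triangle inequality produces a pathwise (randomization-free) bound
\begin{align*}
\norm{Q_u^{\star,\ell}} \ \le \ \norm{x^\ell} + 2T\norm{v^\ell} + \sup_{s \le T}\norm{Q_s^\ell - x^\ell - sv^\ell} + h\sup_{s \le T}\norm{P_s^\ell - v^\ell}.
\end{align*}
Inserting this and \eqref{eq:G_i_mf} into the previous display produces precisely the form of \eqref{apriori:QiVidev}, except that the deviation quantities $D_\ell$ themselves appear on the right multiplied by a coefficient of order $L_e T^2$.

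To close the argument, I would sum the implicit inequality over $i \in \{1,\dots,N\}$; the mean-field term $\tfrac{\epsilon\tilde L}{N}\sum_\ell$ combines cleanly with the outer $\sum_i$ to produce $\epsilon\tilde L \sum_\ell$. This yields an inequality of the form $\sum_\ell D_\ell \le \alpha \sum_\ell D_\ell + \Sigma$, where $\alpha<1$ follows from $L_e T^2 \le 1/9$ and $\Sigma$ is linear in $\sum_\ell(\norm{x^\ell}+L_e^{-1/2}\norm{v^\ell})$ together with an $N L_e^{-1}\epsilon\mathbf{W}_0$ offset. Absorbing $\alpha\sum_\ell D_\ell$ into the left-hand side gives \eqref{apriori:QVdev}, and substituting \eqref{apriori:QVdev} back into the per-particle implicit inequality controls the residual mean-field term and produces \eqref{apriori:QiVidev}.

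The main care point is the randomized evaluation: because $\mathcal{U}_k$ lies in $[0,1]$, all estimates can be kept pathwise, so no expectations or concentration arguments are needed here. The subtlety is that the evaluation point $Q_u^\star$ involves $P_{\lfloor u \rfloor}$, so the velocity deviation contributes to the bound on the position force; this is precisely why the weighted quantity $D_i$ with the $L_e^{-1/2}$ factor on velocities appears, since it balances the $T^2/2$ and $T$ prefactors arising from integrating $\nabla U$ once and twice, allowing a single smallness condition on $L_e T^2$ to control both absorptions.
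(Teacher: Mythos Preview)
Your approach is correct and matches the paper's: integrate \eqref{eq:hamdyn_meanf_num} once and twice, bound the force via \eqref{eq:G_i_mf}, control $\norm{Q_u^{\star,\ell}}$ pathwise using $\mathcal{U}_k\in[0,1]$ and $h\le T$, and absorb the self-referential terms using $L_eT^2\le 1/9$.

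One small clarification about logical order: inequality \eqref{apriori:QiVidev} as stated in the lemma is \emph{already} implicit---the mean-field sum $\sum_\ell D_\ell$ appears on its right-hand side. So your ``substitute \eqref{apriori:QVdev} back'' step is unnecessary to obtain \eqref{apriori:QiVidev}. The paper's order is: first absorb only the diagonal term $D_i$ appearing on the right (coefficient $\frac{3}{2}(L+\epsilon\tilde L)TL_e^{-1/2}\le\frac{1}{2}$), which yields \eqref{apriori:QiVidev} directly with the $\sum_\ell D_\ell$ still present; then sum over $i$ and perform a second absorption (now of $\sum_\ell D_\ell$, with coefficient $3\epsilon\tilde L TL_e^{-1/2}\le\frac{1}{2}$) to close and obtain \eqref{apriori:QVdev}. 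Your proposed order would also work but with a redundant back-substitution.
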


\begin{proof}
Note that \eqref{apriori:QVdev} follows from summing  \eqref{apriori:QiVidev} over $i$ from $1$ to $N$ and invoking \eqref{eq:CT}.   Therefore, it suffices to prove \eqref{apriori:QiVidev}.  Let $t \in [0,T]$ and $Q^{\star}_t := Q_{\lfloor t \rfloor }  + h \mathcal{U}_{\lfloor t \rfloor/h} P_{\lfloor t \rfloor }$.  From \eqref{eq:hamdyn_meanf_num}, note that \begin{align}
& \norm{Q_t^i(x,v) - x^i - t v^i}
+ L_e^{-1/2}  \norm{P_t^i(x,v) - v^i} \ = \ \norm{ \int_0^t \int_0^s \nabla_i U(Q_r^{\star}) dr ds } + L_e^{-1/2} \norm{ \int_0^t \nabla_i U(Q_s^{\star}) ds }  \nonumber \\
& ~  \le  \int_0^t \int_0^s \norm{\nabla_i U(Q_r^{\star})} dr ds  + L_e^{-1/2}  \int_0^t \norm{\nabla_i U(Q_s^{\star})} ds \nonumber \\
& ~ \overset{\eqref{eq:G_i_mf}}{\le} (L+\epsilon \tilde{L} ) \left( \int_0^t \int_0^s \norm{Q_r^{\star,i}} dr ds + L_e^{-1/2} \int_0^t \norm{Q_s^{\star,i}} ds   \right)  + (\frac{T^2}{2} + L_e^{-1/2} T)  \epsilon  \mathbf{W}_0 \nonumber \\
& ~ \quad + \frac{\epsilon \tilde{L}}{N} \sum_{\ell=1}^N \left( \int_0^t \int_0^s \norm{Q_r^{\star,\ell}} dr ds +L_e^{-1/2} \int_0^t \norm{Q_s^{\star,\ell}} ds   \right) \nonumber \\
& ~ \le 
(L+\epsilon \tilde{L} ) \left( \int_0^t \int_0^s ( \norm{Q^i_{\lfloor r \rfloor}} + h \norm{P^i_{\lfloor r \rfloor}} ) dr ds +  L_e^{-1/2}\int_0^t ( \norm{Q^i_{\lfloor s \rfloor}} + h \norm{P^i_{\lfloor s \rfloor}} ) ds   \right)  + (\frac{T^2}{2} + L_e^{-1/2} T) \mathbf{W}_0  \nonumber \\
& ~ \quad + \frac{\epsilon \tilde{L}}{N} \sum_{\ell=1}^N \left( \int_0^t \int_0^s ( \norm{Q_{\lfloor r \rfloor}^{\ell}} +  h \norm{P_{\lfloor r \rfloor}^{\ell}} ) dr ds +  L_e^{-1/2} \int_0^t (\norm{Q_{\lfloor s \rfloor}^{\ell}} + h \norm{P_{\lfloor s \rfloor}^{\ell}}) ds   \right) \nonumber \\
& ~ \le \frac{3}{2} (L+\epsilon \tilde{L})  T L_e^{-1/2} \left(  \sup_{s \le T} \norm{Q_s^{i}(x,v) - x^{i} - s v^{i}}
+ L_e^{-1/2}  \sup_{s \le T} \norm{P_s^{i}(x,v) - v^{i}}  + \norm{x^i} + 2 T \norm{v^i} \right)       \nonumber \\
& ~ \qquad + \frac{3}{2} \frac{\epsilon \tilde{L}  T}{N} L_e^{-1/2}   \sum_{\ell=1}^N \left( \sup_{s \le T} \norm{Q_s^{\ell}(x,v) - x^{\ell} - s v^{\ell}}
+ L_e^{-1/2} \sup_{s \le T} \norm{P_s^{\ell}(x,v) - v^{\ell}} \right)  \nonumber \\
& ~ \qquad +  \frac{3}{2} \frac{\epsilon \tilde{L} T}{N} L_e^{-1/2} \sum_{\ell=1}^N \left( \norm{x^{\ell}} + 2 T \norm{v^{\ell}} \right) + (\frac{T^2}{2} + L_e^{-1/2} T)  \epsilon \mathbf{W}_0 \;. 
\nonumber 
\end{align}
Taking the supremum of the left-hand-side, invoking  \eqref{eq:CT}, and simplifying yields \eqref{apriori:QiVidev}. \end{proof}

To state the next lemma, it helps to introduce the shorthand notation: \begin{equation} \label{eq:zwZtWt}
z \ := \ x-y, \quad w \ := \ v-u, \quad Z_t \ := \ Q_t(x,v)-Q_t(y,u), \quad \text{and} \quad W_t \ := \ P_t(x,v)-P_t(y,u) \;.
\end{equation}

\begin{lemma} \label{lem:aprioriZW}
Suppose \Cref{ass} \ref{ass_Vmin}, \ref{ass_Vlip},  and \ref{ass_Wlip} hold. 
Let $\epsilon \in [0, \infty)$ and $T \in (0,\infty)$   satisfy \eqref{eq:Cepsi} and \eqref{eq:CT}. Let $h \ge 0$ satisfy $T/h \in \mathbb{Z}$ if $h>0$. 
For any $x,y,u,v\in \mathbb{R}^{N d}$, \begin{align}
	\begin{split} \label{apriori:ZiWidev}
& \sup_{s \le T} \norm{Z_s^i - z^i - s w^i}
+ h \sup_{s \le T} \norm{W_s^i - w^i} \, \le \, \frac{9}{5} (L+ \epsilon \tilde{L} ) T^2 ( \max(\norm{z^i},\norm{z^i+Tw^i}) + T \norm{w^i} ) \\
& \qquad + \frac{9 \epsilon \tilde{L} T^2}{5N}  \sum_{\ell=1}^N \left( \norm{z^{\ell}} + 2 T \norm{w^{\ell}}+  \sup_{s \le T} \norm{Z_s^{\ell} - z^{\ell} - s w^{\ell}}
+ h \sup_{s \le T} \norm{W_s^{\ell} - w^{\ell}} \right) \;,
\\ & \le \, \frac{9}{5} (L+ \epsilon \tilde{L} ) T^2 ( \norm{z^i} + 2 T \norm{w^i} )  + \frac{9 \epsilon \tilde{L} T^2}{5N}  \sum_{\ell=1}^N \left( \norm{z^{\ell}} + 2 T \norm{w^{\ell}} + \sup_{s \le T} \norm{Z_s^{\ell} - z^{\ell} - s w^{\ell}}
+ h \sup_{s \le T} \norm{W_s^{\ell} - w^{\ell}} \right) \;,
\end{split} \\
  \begin{split} \label{apriori:ZWdev} 
& \sum_{\ell=1}^N  \left( \sup_{s \le T} \norm{Z_s^{\ell} - z^{\ell} - s w^{\ell}} + h \sup_{s \le T} \norm{W_s^{\ell}-w^{\ell}} \right) 
\, \le \, 	\frac{9}{5} (L+ 2 \epsilon \tilde{L}) T^2 \sum_{\ell=1}^N \left( \norm{z^{\ell}} + 2 T \norm{w^{\ell}} \right) \;. 
 	\end{split} 
 	\end{align}

\end{lemma}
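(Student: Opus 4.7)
The plan is to mirror the single-trajectory bookkeeping of \Cref{lem:aprioriQV}, now for the difference process, and to close the estimate via a Gronwall-type self-absorption enabled by the scale condition \eqref{eq:CT}. Since $Z_0 = z$ and $W_0 = w$, subtracting the dynamics \eqref{eq:hamdyn_meanf_num} applied to $(x,v)$ and $(y,u)$ gives, for $t \in [0,T]$,
\begin{align*}
Z_t^i - z^i - tw^i \ &= \ -\int_0^t\!\!\int_0^s \bigl(\nabla_i U(Q_r^\star(x,v)) - \nabla_i U(Q_r^\star(y,u))\bigr)\,\rmd r\,\rmd s, \\
W_t^i - w^i \ &= \ -\int_0^t \bigl(\nabla_i U(Q_s^\star(x,v)) - \nabla_i U(Q_s^\star(y,u))\bigr)\,\rmd s.
\end{align*}
I would then apply the mean-field Lipschitz bound \eqref{eq:L_i_mf} to the integrands, and observe that since $Q_r^\star(x,v) - Q_r^\star(y,u) = Z_{\lfloor r\rfloor} + h\,\mathcal{U}_{\lfloor r\rfloor/h}\,W_{\lfloor r\rfloor}$ with $\mathcal{U}_{\lfloor r\rfloor/h} \in [0,1]$, each componentwise difference of $Q^\star$'s is pointwise bounded by $\|Z_{\lfloor r\rfloor}^k\| + h\|W_{\lfloor r\rfloor}^k\|$.

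The next step is a convexity-plus-decomposition. Set $C_k := \max(\|z^k\|,\|z^k+Tw^k\|) + T\|w^k\|$ and let $D^k$ denote the left-hand side of \eqref{apriori:ZiWidev}. Convexity of $s \mapsto \|z^k + sw^k\|$ places its supremum on $[0,T]$ at an endpoint, so combining
\[
\sup_{s\le T}\|Z_s^k\| \ \le \ \max(\|z^k\|,\|z^k + Tw^k\|) + \sup_{s\le T}\|Z_s^k - z^k - sw^k\|
\]
with $h\sup_{s\le T}\|W_s^k\| \le h\|w^k\| + h\sup_{s\le T}\|W_s^k - w^k\|$ and using $h \le T$ (which holds because $T/h \in \mathbb{Z}_{\ge 1}$) yields $\sup_{s\le T}\|Z_s^k\| + h\sup_{s\le T}\|W_s^k\| \le C_k + D^k$. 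Inserting this pointwise bound into the integral representation, bounding $\int_0^t\!\!\int_0^s\,\rmd r\,\rmd s + h\int_0^t\,\rmd s \le T^2/2 + hT \le 3T^2/2$, and taking the supremum over $t \in [0,T]$ on the left side produces the implicit scalar inequality
\[
D^i \ \le \ \frac{3T^2}{2}\bigg[(L+\epsilon\tilde{L})(C_i + D^i) + \frac{\epsilon\tilde{L}}{N}\sum_{\ell=1}^N(C_\ell + D^\ell)\bigg].
\]

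To close, I would invoke \eqref{eq:CT}: $(3/2)(L+\epsilon\tilde{L})T^2 \le (3/2)(L+2\epsilon\tilde{L})T^2 \le 1/6$, so dividing by $5/6 = 1 - 1/6$ and bounding $C_\ell \le \|z^\ell\| + 2T\|w^\ell\|$ delivers \eqref{apriori:ZiWidev} with coefficient $(6/5)\cdot(3/2) = 9/5$. For \eqref{apriori:ZWdev}, summing the implicit inequality over $i$ consolidates $(L+\epsilon\tilde{L})\sum_i(\cdots) + \epsilon\tilde{L}\sum_\ell(\cdots)$ into $(L+2\epsilon\tilde{L})\sum_i(\cdots)$, producing $\sum_i D^i \le (3T^2/2)(L+2\epsilon\tilde{L})\bigl(\sum_i C_i + \sum_i D^i\bigr)$; one more use of \eqref{eq:CT} with $(3/2)(L+2\epsilon\tilde{L})T^2 \le 1/6$ and isolating $\sum_i D^i$ yields the stated $9/5$ factor. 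There is no conceptual obstacle beyond \Cref{lem:aprioriQV}; the only point of care is that the numerical threshold $1/9$ in \eqref{eq:CT} is calibrated exactly so that the self-absorbing coefficient on the right is at most $1/6$, which is what delivers the clean $9/5$ on the left.
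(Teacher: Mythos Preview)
Your proof is correct and follows essentially the same route the paper intends: it defers to the argument of \Cref{lem:aprioriQV}, replacing the growth bound \eqref{eq:G_i_mf} by the Lipschitz bound \eqref{eq:L_i_mf}, and your bookkeeping of the integrals ($T^2/2 + hT \le 3T^2/2$ via $h \le T$) together with the self-absorption using $(L+2\epsilon\tilde L)T^2 \le 1/9$ recovers the stated $9/5$ constants exactly.
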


\begin{proof}
The proof of Lemma~\ref{lem:aprioriZW} is similar to the proof of Lemma~\ref{lem:aprioriQV} except with \eqref{eq:L_i_mf} taking the place of \eqref{eq:G_i_mf}.
\end{proof}

\subsection{Asymptotic Contractivity}

The following Lemma shows that the asymptotic strong co-coercivity assumption on $\nabla V$ in \ref{ass_Vconv} implies that the numerical flow of the $N$-particle system $(Q_t(x,v), P_t(x,v))$ that solves \eqref{eq:hamdyn_meanf_num}  is asymptotically contractive in the $i$-th position component if the $i$-th initial velocities are synchronized.  

\begin{lemma} \label{lem:convexarea_meanf}
Suppose \Cref{ass} \ref{ass_Vmin}, \ref{ass_Vlip}, \ref{ass_Vconv} and \ref{ass_Wlip} hold.    Let $\epsilon \in [0, \infty)$ and $T \in (0,\infty)$   satisfy \eqref{eq:Cepsi} and \eqref{eq:CT}. Let $h \ge 0$ satisfy $T/h \in \mathbb{Z}$ if $h>0$.  Then for any $t \in [0,T]$, $i \in \{1, \cdots, N \}$ and for all $x, y, u, v \in \mathbb{R}^{N d}$ such that $v^i = u^i$, it holds a.s. \begin{equation}
   \begin{aligned}
    & | Q_t^i(x,v) - Q_t^i(y,u) |^2 \le  ( 1- K T^2 / 3) |x^i - y^i|^2  \\
    & \qquad+ \frac{5}{4} T^2 \hat{C} + \epsilon^2 \tilde{L}^2   T^4 \left(7 + \frac{1}{K T^2} \right)   \left( \frac{1}{N} \sum_{\ell=1}^N \left( \sup_{s \le T} \norm{Z_s^{\ell}} + h \sup_{s \le T} \norm{W_s^{\ell}} \right)  \right)^2 \;. 
    \end{aligned}
\end{equation}
\end{lemma}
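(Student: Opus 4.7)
The plan is to adapt the variation-of-parameters argument of \Cref{lem:convexarea} to the $i$-th component of the randomized time-integrated $N$-particle system. Introduce the shorthand $Z_t^i = Q_t^i(x,v) - Q_t^i(y,u)$, $W_t^i = P_t^i(x,v) - P_t^i(y,u)$, and $Z_t^{\star,i} = Q_t^{\star,i}(x,v) - Q_t^{\star,i}(y,u)$, and decompose
\[
\nabla_i U(Q_t^\star(x,v)) - \nabla_i U(Q_t^\star(y,u)) \ = \ \beta_t^i + \epsilon\,\Omega_t^i,
\]
where $\beta_t^i = \nabla V(Q_t^{\star,i}(x,v)) - \nabla V(Q_t^{\star,i}(y,u))$ and $\Omega_t^i$ is the symmetric average over $j$ of the $\nabla_1 W$ differences. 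Set $a_t^i = |Z_t^i|^2$; since $v^i = u^i$ we have $W_0^i = 0$, hence $\dot a_0^i = 0$.

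Next, the equation for $a_t^i$ reads $\ddot a_t^i + K a_t^i = 2|W_t^i|^2 - 2\rho_t^i + \delta_t^i$ with $\rho_t^i := \langle Z_t^i, \beta_t^i\rangle$ and $\delta_t^i := K a_t^i - 2\epsilon\langle Z_t^i, \Omega_t^i\rangle$, and variation of parameters against the harmonic-oscillator Green's function yields
\[
a_T^i \ = \ c_T\, a_0^i + \int_0^T s_{T-r}\bigl(2 |W_r^i|^2 - 2\rho_r^i + \delta_r^i\bigr)\,dr,
\]
with $c_t, s_t$ as in \Cref{lem:convexarea}. The three integrand pieces are then bounded exactly in parallel to that proof: strong co-coercivity \ref{ass_Vconv} applied to $Z_r^{\star,i}$ gives $\rho_r^i \geq K|Z_r^{\star,i}|^2 + L^{-1}|\beta_r^i|^2 - \hat C$, which is converted into a bound involving $a_r^i$ via $|Z_r^i - Z_r^{\star,i}| \leq h|W_{\lfloor r\rfloor}^i|$ and Young's inequality; the Cauchy--Schwarz plus Fubini step of \Cref{lem:convexarea} applied to $W_r^i = -\int_0^r(\beta_s^i+\epsilon\Omega_s^i)\,ds$ turns $\int_0^T s_{T-r}|W_r^i|^2\,dr$ into $T^2\int_0^T s_{T-s}(8|\beta_s^i|^2 + (8/7)\epsilon^2|\Omega_s^i|^2)\,ds$; and \ref{ass_Wlip} with Jensen's inequality yields $|\Omega_s^i|^2 \leq 2\tilde L^2 (|Z_s^{\star,i}|^2 + \bar Z_s^2)$ with $\bar Z_s := N^{-1}\sum_{\ell=1}^N |Z_s^{\star,\ell}|$.

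To close the estimate, bound $|Z_s^{\star,\ell}| \leq \sup_{s\leq T}|Z_s^\ell| + h\sup_{s\leq T}|W_s^\ell|$, which is precisely the quantity appearing on the right-hand side of the conclusion; the a priori estimate \eqref{apriori:ZWdev} of \Cref{lem:aprioriZW} combined with \eqref{eq:Cepsi} and \eqref{eq:CT} is used to reabsorb the self-referencing $|Z_s^{\star,i}|^2$ term back into $|Z_s^i|^2$ plus the averaged mean-field contribution. Finally, the elementary bound $c_T \leq 1 - (11/24)KT^2$ (exactly as in \eqref{eq:bound_c_T}) leaves a large enough coercivity gap to yield the contraction factor $1 - KT^2/3$, while the accumulated cross terms produce the factor $\epsilon^2\tilde L^2 T^4(7 + 1/(KT^2))$, and the pointwise $\hat C$ contributions produce the $(5/4)T^2 \hat C$ term.

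The main obstacle, distinct from the continuous-time setting of \Cref{lem:convexarea}, is that the force is evaluated at the random midpoint $Q_t^{\star}$ rather than at $Q_t$, so strong co-coercivity applies directly to $Z_t^{\star,i}$ instead of $Z_t^i$; the resulting mismatch $Z_t^i - Z_t^{\star,i}$, of size $h|W_{\lfloor t\rfloor}^i|$, must be kept subcritical relative to the coercivity gain $-KT^2 a_t^i$, and interacts multiplicatively with the Lipschitz constant $L$ in the estimate of $\rho_r^i$. It is precisely this bookkeeping that forces the step-size-free condition \eqref{eq:CT} on $LT^2$ and the weak-interaction condition \eqref{eq:Cepsi} on $\epsilon\tilde L$, and that explains why the contraction rate degrades from $5/12$ (continuous) to $1/3$ (randomized-midpoint discretization).
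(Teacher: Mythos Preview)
Your proposal is correct and follows the paper's approach closely: variation of parameters for $|Z_t^i|^2$, strong co-coercivity applied at the randomized midpoint $Z_t^{\star,i}$, the Cauchy--Schwarz/Fubini treatment of $\int_0^T s_{T-r}|W_r^i|^2\,dr$, and explicit control of the mismatch $Z_t^i - Z_t^{\star,i}$. One small correction: that mismatch is not bounded by $h|W_{\lfloor t\rfloor}^i|$ alone but also carries the second-order piece $-\tfrac12(t-\lfloor t\rfloor)^2(\beta_t^{\star,i}+\epsilon\Omega_t^{\star,i})$; the paper absorbs the resulting $h^2|\beta_t^{\star,i}|^2$ contribution directly via the $-L^{-1}|\beta_t^{\star,i}|^2$ term supplied by co-coercivity (and does not invoke \eqref{apriori:ZWdev} at all in this lemma---all self-referencing $A_r^{\star,i}$ terms are cancelled pointwise by the $-2K A_r^{\star,i}$ from \eqref{ieq:rhoit}).
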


\begin{proof}
This proof carefully generalizes the proof of \cite[Lemma 5]{BouRabeeMarsden2022} to an $N$-particle system where $\nabla V$ is only asymptotically strongly co-coercive.
To this end, it is notationally convenient to define \begin{align*}
(X_t, V_t) &:= (Q_t(x,v), P_t(x,v)),  \quad  
(Y_t, U_t) := (Q_t(y,u), P_t(y,u)), \quad
Z_t := X_t - Y_t, \quad W_t := V_t - U_t, \\
X_t^{\star} &:= X_{\lfloor t \rfloor} + h \mathcal{U}_{\lfloor t \rfloor/h} V_{\lfloor t \rfloor},  \quad Y_t^{\star} := Y_{\lfloor t \rfloor} + h \mathcal{U}_{\lfloor t \rfloor/h} U_{\lfloor t \rfloor},  \\
\beta_t^{\star,i} &:= \nabla V(X_t^{\star,i}) - \nabla V(Y_t^{\star,i}),  \quad \Omega_t^{\star,i} := \frac{1}{N} \sum_{\ell=1}^N \left( \nabla_1 W(X_t^{\star,i}, X_t^{\star,\ell}) - \nabla_1 W(Y_t^{\star,i}, Y_t^{\star,\ell} ) \right) \;.
\end{align*}
Therefore, $\nabla_i U(X_t^{\star}) - \nabla_i U(Y_t^{\star}) = \beta_t^{\star,i} + \epsilon \Omega_t^{\star,i}$.  
Furthermore, let $A_t^i := \norm{Z_t^i}^2$, $B_t^i := 2 \langle Z_t^i, W_t^i \rangle$, and $A_t^{\star,i} := \norm{Z_t^i}^2$.  Note that $\rho_t^{\star,i} := \langle Z_t^{\star,i}, \beta_t^{\star,i} \rangle$ satisfies \begin{equation} \label{ieq:rhoit}
K A_t^{\star,i} + \frac{1}{L} \norm{\beta_t^{\star,i}}^2 - \hat{C} \overset{\eqref{eq:K_i}}{\le}  \rho_t^{\star,i} \overset{\ref{ass_Vlip}}{\le}  L A_t^{\star,i} \;.
\end{equation}
From \eqref{eq:hamdyn_meanf_num}, it follows that \begin{align} 
		\frac{d}{dt} Z_t^i \, &= W_t^i ,
		\qquad \frac{d}{dt} W_t^i \, = \, -  \beta_t^{\star,i} - \epsilon \Omega_t^{\star,i} \;, \quad \text{and hence,  } \label{eq:dotz_dotw} \\
        \frac{d}{dt} A_t^i &= B_t^i, \quad \frac{d}{dt} B_t^i = 2 |W_t^i|^2 - 2 \langle Z_t^i, \beta_t^{\star,i} \rangle - 2 \epsilon \langle Z_t^i, \Omega_t^{\star,i} \rangle \;. 
    \end{align}
    Let $c_t:=\cos(\sqrt{ K} t )$ and $s_t := \sin( \sqrt{ K} t )/\sqrt{ K} $. 
By variation of parameters, \begin{equation} \label{vop_at} 
    A_T^i = c_T A_0^i + \int_0^T s_{T-r} \left( 2 \norm{W_r^i}^2 - 2 \rho_r^{\star,i} + \epsilon_r^i \right) dr
\end{equation} 
where $\epsilon_t^i := K A_t^i - 2 \langle Z_t^i - Z_t^{\star,i}, \beta_t^{\star, i} \rangle - 2 \epsilon \langle Z_t^i, \Omega_t^{\star,i} \rangle$. 
To upper bound  $\epsilon_r^i$  in \eqref{vop_at},   \begin{align}    & \epsilon_t^i  \ \le \ -2 \langle (t- \lfloor t \rfloor - h \mathcal{U}_{\lfloor t \rfloor/h}) W_{\lfloor t \rfloor}^i - \frac{1}{2} (t - \lfloor t \rfloor)^2 (\beta_t^{\star,i} + \epsilon \Omega_{t}^{\star,i}), \beta_t^{\star,i} \rangle \nonumber + \frac{\epsilon^2}{K} \norm{\Omega_t^{\star,i}}^2 + 2 K A_t^i  \nonumber \\
& \quad \le \norm{W_{\lfloor t \rfloor}^i}^2 + \frac{5}{2} h^2 \norm{\beta_t^{\star,i}}^2 + \epsilon^2 \left( \frac{h^2}{2} + \frac{1}{K} \right) \norm{\Omega_t^{\star,i}}^2  + 2 K A_t^i \;. \label{ieq:epsi} 
\end{align}
To upper bound the term $A_t^i$ in \eqref{ieq:epsi},   \begin{align}    & A_t^i  \ = \ \norm{ Z_t^{\star,i} + (t- \lfloor t \rfloor - h \mathcal{U}_{\lfloor t \rfloor/h}) W_{\lfloor t \rfloor}^i - \frac{1}{2} (t - \lfloor t \rfloor)^2 (\beta_t^{\star,i} + \epsilon \Omega_{t}^{\star,i}) }^2 \nonumber \\
& \quad \le A_t^{\star,i} + 2 h | \langle Z_t^{\star,i}, W_{\lfloor t \rfloor}^i \rangle | - h^2 \rho_t^{\star,i} + \epsilon h^2 | \langle Z_t^{\star,i}, \Omega_{t}^{\star,i} \rangle | \nonumber \\
& \qquad + h^3 | \langle  W_{\lfloor t \rfloor}^i, \beta_t^{\star,i} \rangle | + \epsilon h^3 | \langle  W_{\lfloor t \rfloor}^i , \Omega_t^{\star,i} \rangle | + \frac{h^4}{2} \left(\norm{\beta_t^{\star,i}}^2  + \epsilon^2  \norm{\Omega_t^{\star,i}}^2   \right) \nonumber \\
& \quad \le - h^2 \rho_t^{\star,i} + \left(  1+ \frac{3}{2} K h^2 \right)  A_t^{\star,i} + \left( \frac{1}{K} + h^2 \right) \norm{ W_{\lfloor t \rfloor}^i}^2 + \epsilon^2 \left( \frac{h^2}{2 K} + h^4 \right) \norm{\Omega_t^{\star,i}}^2  + h^4 \norm{\beta_t^{\star,i}}^2 
\label{ieq:Ait}
\end{align}
To upper bound the terms involving $|W_t^i|^2$ in \eqref{vop_at},  use \eqref{eq:dotz_dotw} and note that $W_0^i=0$, since the $i$-th initial velocities in the two copies are synchronized.  Therefore, \begin{align} \label{eq:Wit2}
 |W_t^i|^2 \, = \,   \norm{  \int_0^{t}   \beta_s^{\star,i} + \epsilon \Omega_s^{\star,i}  ds }^2 \ \, \le \,  t  \int_0^{t} \norm{\beta_s^{\star,i} + \epsilon \Omega_s^{\star,i}  }^2 ds
		\, \le \,  2 t  \int_0^{t} \left(\norm{\beta_s^{\star,i}}^2 + \epsilon^2 \norm{ \Omega_s^{\star,i}  }^2\right) ds 
\end{align}
where we used the Cauchy-Schwarz inequality.  By \eqref{eq:CT}, note that $s_{t-r}$ is monotonically decreasing with $r$ and satisfies \eqref{eq:sinus}. 
Therefore, combining \eqref{eq:Wit2} and \eqref{eq:sinus}, and by Fubini's Theorem,   \begin{align}
& \int_0^t s_{t-r} |W_r^i|^2 dr  \, \overset{\eqref{eq:Wit2}}{\le} \,
 2 \int_0^t \int_0^r  r s_{t-r} \left(\norm{\beta_s^{\star,i}}^2 + \epsilon^2 \norm{ \Omega_s^{\star,i}  }^2\right)  ds dr
 \overset{\eqref{eq:sinus}}{\le} 
 2 \int_0^t \int_0^r  r s_{t-s}  \left(\norm{\beta_s^{\star,i}}^2 + \epsilon^2 \norm{ \Omega_s^{\star,i}  }^2\right) ds dr
\nonumber \\
& \qquad \ \le \,  2 \int_0^t \int_s^t  r s_{t-s}  \left(\norm{\beta_s^{\star,i}}^2 + \epsilon^2 \norm{ \Omega_s^{\star,i}  }^2\right)  dr ds  \, \le \,  t^2  \int_0^t   s_{t-s} \left(\norm{\beta_s^{\star,i}}^2 + \epsilon^2 \norm{ \Omega_s^{\star,i}  }^2\right)  ds \;. \nonumber
\end{align} In fact, as a byproduct of this calculation, observe that \begin{equation}
(\int_0^t s_{t-r} \norm{W^i_{\lfloor r \rfloor}}^2 dr ) \vee ( \int_0^t s_{t-r} \norm{W^i_r}^2 dr) \, \le \,  t^2 \int_0^t   s_{t-s} \left(\norm{\beta_s^{\star,i}}^2 + \epsilon^2 \norm{ \Omega_s^{\star,i}  }^2\right)   ds \;.  \label{ieq:Wit2}
\end{equation}
To upper bound the mean-field interaction force,   \begin{align}  & \norm{\Omega_t^{\star,i}}^2    \ = \ \norm{ \frac{1}{N} \sum_{\ell=1}^N \nabla_1 W(X^{\star,i}_t,X^{\star,\ell}_t) - \nabla_1 W(Y^{\star,i}_t,Y^{\star,\ell}_t) }^2  \nonumber \\
& \quad \le \left( \frac{1}{N} \sum_{\ell=1}^N \norm{ \nabla_1 W(X^{\star,i}_t,X^{\star,\ell}_t) - \nabla_1 W(Y^{\star,i}_t,Y^{\star,\ell}_t)} \right)^2     \nonumber  \\
& \quad \overset{\ref{ass_Wlip}}{\le} \left( \frac{\tilde{L}}{N} \sum_{\ell=1}^N ( \norm{ Z_t^{\star,i}} + \norm{Z_t^{\star,\ell}} ) \right)^2   =  \left( \tilde{L} \norm{Z_t^{\star,i}} + \frac{\tilde{L}}{N} \sum_{\ell=1}^N \norm{Z_t^{\star,\ell}} \right)^2   \nonumber \\
& \quad \le 2 \tilde{L}^2 \norm{Z_t^{\star,i}}^2 + 2 \tilde{L}^2 \left( \frac{1}{N} \sum_{\ell=1}^N \sup_{s \le T} \norm{Z_s^{\star,\ell}} \right)^2 \le 2 \tilde{L}^2 A_t^{\star,i} + 2 \tilde{L}^2 \left( \frac{1}{N} \sum_{\ell=1}^N \left( \sup_{s \le T} \norm{Z_s^{\ell}} + h \sup_{s \le T} \norm{W_s^{\ell}} \right)  \right)^2
\label{ieq:Omit}
\end{align}

 Inserting these bounds into the second term of \eqref{vop_at} and simplifying yields 
\begin{align}
&  \int_0^T s_{T-r} \left( 2 \norm{W_r^i}^2 - 2 \rho_r^{\star,i} + \epsilon_r^i \right) dr  \nonumber  \\
&  \quad \overset{\eqref{ieq:epsi}}{\le}  \int_0^T s_{T-r} \left( 2 \norm{W_r^i}^2 +   \norm{ W_{\lfloor r \rfloor}^i}^2 - 2 \rho_r^{\star,i} + 2 K A_r^{i} + \frac{5}{2} h^2 \norm{\beta_r^{\star,i}}^2 + \epsilon^2 \left( \frac{h^2}{2} + \frac{1}{K} \right) \norm{\Omega_r^{\star,i}}^2 \right) dr \nonumber \\
&  \quad \overset{\eqref{ieq:Ait}}{\le}  \int_0^T s_{T-r} \left( 2 \norm{W_r^i}^2 +  (3 + 2 K h^2 )  \norm{ W_{\lfloor r \rfloor}^i}^2 - 2 (1+K h^2) \rho_r^{\star,i} + K (1+3 K h^2) A_r^{\star,i} \right. \nonumber \\ 
&\qquad \qquad \qquad \qquad \left. + \left( \frac{5}{2} h^2 + 2 K h^4 \right) \norm{\beta_r^{\star,i}}^2 + \epsilon^2 \left(\frac{3}{2} h^2 + 2 K h^4 + \frac{1}{K} \right) \norm{\Omega_r^{\star,i}}^2 \right)  dr \nonumber \\
& \quad \overset{\eqref{ieq:Wit2}}{\le} \int_0^T s_{T-r} \left(   - 2 (1+K h^2) \rho_r^{\star,i} + K (1+3 K h^2) A_r^{\star,i} + \left(  (5 + 2 K h^2 ) T^2 +  \frac{5}{2} h^2 + 2 K h^4 \right) \norm{\beta_r^{\star,i}}^2  \right. \nonumber \\ 
&\qquad \qquad \qquad \qquad \left. +  \epsilon^2 \left((5 + 2 K h^2 ) T^2 + \frac{3}{2} h^2 + 2 K h^4 + \frac{1}{K} \right) \norm{\Omega_r^{\star,i}}^2 \right)  dr \nonumber \\
& \quad \overset{\eqref{ieq:rhoit},\eqref{eq:CT}}{\le}  \int_0^T s_{T-r} \left(    2 (1+K h^2) \hat{C} + \left( - 2 K (1+K h^2) +   K (1+3 K h^2) \right) A_r^{\star,i}  \right. \nonumber \\ 
&\qquad \qquad \qquad \qquad 
+ \frac{1}{L} \left( - 2 (1+K h^2) + (5 + 2 K h^2 ) L T^2 +  \frac{5}{2} L h^2 + 2 K L h^4 \right) \norm{\beta_r^{\star,i}}^2 \nonumber \\
&\qquad \qquad \qquad \qquad  \left. +  \epsilon^2 T^2 \left(7 + \frac{1}{K T^2} \right) \norm{\Omega_r^{\star,i}}^2 \right)  dr \nonumber \\
& \quad \overset{\eqref{ieq:Omit}}{\le}   \int_0^T s_{T-r} \Big(    2 (1+K h^2) \hat{C}  + 2 \epsilon^2 \tilde{L}^2  T^2 \left(7 + \frac{1}{K T^2} \right)   \left( \frac{1}{N} \sum_{\ell=1}^N \left( \sup_{s \le T} \norm{Z_s^{\ell}} + h \sup_{s \le T} \norm{W_s^{\ell}} \right)  \right)^2 \nonumber\\
&\qquad \qquad \qquad \qquad 
 + K \left( - (1-K h^2)  +  \frac{1}{K} 2 \epsilon^2 \tilde{L}^2  T^2 \left(7 + \frac{1}{K T^2} \right)   \right) A_r^{\star,i}   \nonumber \\ 
&\qquad \qquad \qquad \qquad 
  + \frac{1}{L} \left( - 2 (1+K h^2) + (5 + 2 K h^2 ) L T^2 +  \frac{5}{2} L h^2 + 2 K L h^4 \right) \norm{\beta_r^{\star,i}}^2 \Big)  dr \nonumber \\
& \quad \overset{\eqref{eq:CT}, \eqref{eq:Cepsi}}{\le}   \frac{5}{4} T^2 \hat{C} + \epsilon^2 \tilde{L}^2   T^4 \left(7 + \frac{1}{K T^2} \right)   \left( \frac{1}{N} \sum_{\ell=1}^N \left( \sup_{s \le T} \norm{Z_s^{\ell}} + h \sup_{s \le T} \norm{W_s^{\ell}} \right)  \right)^2
\nonumber 
\end{align}
where in the last step we used $s_{T-r} \le T-r$. 
Inserting this upper bound back into \eqref{vop_at} yields  \[ A_T^i  \, \le \, c_T  \frac{5}{4} T^2 \hat{C} A_0^i + \epsilon^2 \tilde{L}^2   T^4 \left(7 + \frac{1}{K T^2} \right)   \left( \frac{1}{N} \sum_{\ell=1}^N \left( \sup_{s \le T} \norm{Z_s^{\ell}} + h \sup_{s \le T} \norm{W_s^{\ell}} \right)  \right)^2 \;. 
\] The required estimate is then obtained by inserting the inequality \eqref{eq:bound_c_T} 
which is valid since $L T^2 \le 1/4$ and $K \le L$.  The required result then holds a.s.~because $1/2 - 1/24 = 11/24 > 1/3$.
\end{proof}

In retrospect,  asymptotic strong co-coercivity of $\nabla V$ plays a key role in this proof, and in fact, allows one to obtain an optimal parameter dependence in the contraction rate when $\mathcal{R}=0$ in \ref{ass_Vconv}.

\section{Strong Accuracy of Randomized Time Integrator for Mean-Field \texorpdfstring{$U$}{}} 

\label{sec:strong_accuracy}

The main result of this section gives strong accuracy  bounds for the randomized time integrator defined in \eqref{eq:hamdyn_meanf_num}; in turn, this  allows to quantify the asymptotic bias of the corresponding inexact uHMC chain \cite{DuEb21}. Towards this end, the following weighted norm is useful to define: \begin{equation}
\wnorm{(x,v)}^2 \ = \ |x|^2 + L_e^{-1} |v|^2 \quad \forall~(x,v) \in \mathbb{R}^{2d} \label{eq:wnorm}
\end{equation} 
where for notational brevity it helps to work with $L_e = L+2 \epsilon \tilde{L}$.  

\begin{theorem} \label{thm:strong_accuracy}
Suppose \Cref{ass} \ref{ass_Vmin}, \ref{ass_Vlip}, and \ref{ass_Wlip} hold.  Let $\epsilon \in [0, \infty)$ and $T \in (0,\infty)$   satisfy \eqref{eq:Cepsi} and \eqref{eq:CT}. Let $h \ge 0$ satisfy $T/h \in \mathbb{Z}$ if $h>0$.  Then for all $x,v \in \mathbb{R}^{N d}$ and $k\in \mathbb{N}$ with $k\le T/h$ \begin{equation}
\label{eq:strong_accuracy}
\begin{aligned}
    &\mathbb{E}  \frac{1}{N}\sum_{\ell=1}^N \wnorm{(Q_{k h}^{\ell}(x,v), P_{k h}^{\ell}(x,v)) - (q_{k h}^{\ell}(x,v), p_{k h}^{\ell}(x,v))}  \\
    & \qquad \ \le \ 5 (L_e^{1/2} h)^{3/2}  \left( 5\sqrt{ \frac{1}{N}\sum_{\ell=1}^N  \norm{x^{\ell}}^2 }+10T\sqrt{ \frac{1}{N}\sum_{\ell=1}^N \norm{v^{\ell}}^2 } + 3 L_{e}^{-1} \epsilon \mathbf{W}_0 \right).
\end{aligned}
\end{equation}
\end{theorem}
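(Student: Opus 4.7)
\medskip
\noindent\textbf{Proof proposal for \Cref{thm:strong_accuracy}.}
The plan is to adapt the strong accuracy analysis of randomized time integration from \cite{BouRabeeMarsden2022} to the mean-field $N$-particle setting. Introduce the per-particle discretization errors
\[
e_k^i \ := \ Q_{kh}^i(x,v) - q_{kh}^i(x,v) \;, \qquad f_k^i \ := \ P_{kh}^i(x,v) - p_{kh}^i(x,v) \;,
\]
and the weighted averaged error $\bar\Delta_k := \frac{1}{N}\sum_{i=1}^N \wnorm{(e_k^i,f_k^i)}$. Over one time step $[kh,(k+1)h]$, express the increment of both flows via variation of parameters (Duhamel) so that the difference is driven by the force mismatch
\[
D_k^i \ := \ \int_{kh}^{(k+1)h} \bigl( \nabla_i U(Q_{kh}^{\star}) - \nabla_i U(q_s) \bigr) \, \rmd s \;,
\]
where $Q_{kh}^{\star} = Q_{kh} + h\mathcal{U}_k P_{kh}$ is constant on the interval. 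I would then decompose $D_k^i = M_k^i + B_k^i + E_k^i$ with
\begin{align*}
M_k^i &\ = \ h\,\nabla_i U(Q_{kh}^{\star}) - \int_{kh}^{(k+1)h} \nabla_i U(Q_{kh}+(s-kh) P_{kh})\, \rmd s \;,  \\
B_k^i &\ = \ \int_{kh}^{(k+1)h} \bigl( \nabla_i U(Q_{kh}+(s-kh) P_{kh}) - \nabla_i U(q_{kh}+(s-kh) p_{kh}) \bigr) \rmd s \;, \\
E_k^i &\ = \ \int_{kh}^{(k+1)h} \bigl( \nabla_i U(q_{kh}+(s-kh) p_{kh}) - \nabla_i U(q_s) \bigr)\, \rmd s \;.
\end{align*}

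The heart of the argument is the martingale-like term $M_k^i$. Conditional on the filtration $\mathcal{F}_k$ generated by $\{(Q_{jh},P_{jh})\}_{j \le k}$, unbiasedness of $h\mathcal{U}_k \sim \mathrm{Unif}[0,h]$ gives $\mathbb{E}[M_k^i\mid \mathcal{F}_k]=0$. Using \ref{ass_Vlip}, \ref{ass_Wlip} (in the form \eqref{eq:L_i_mf}) together with a conditional Cauchy--Schwarz / variance estimate, I would bound $\mathbb{E}[\lVert N^{-1}\sum_i M_k^i\rVert\mid\mathcal{F}_k]$ by $C\,h^{3/2}\,L_e \cdot (\text{avg }\|P_{kh}^\ell\|)$, which is the source of the $h^{3/2}$ rate. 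The bias terms $B_k^i$ and $E_k^i$ are handled by combining Lipschitz continuity of $\nabla_i U$ (in averaged form) with the a priori deviation estimate \eqref{apriori:QVdev} of \Cref{lem:aprioriQV} applied on a single step: this yields $|E_k^i|=O(h^3)\cdot(\text{force bound})$, while $B_k^i$ is linear in the previous error $\bar\Delta_k$ with Lipschitz constant $O(L_e h)$.

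Next I would assemble these into a discrete Gronwall recursion of the form
\[
\bar\Delta_{k+1}\ \le\ (1 + C_1 L_e^{1/2} h)\,\bar\Delta_k \,+\, C_2 \,h^{3/2} L_e^{3/4}\, \mathbf{A}_k \;,
\]
where $\mathbf{A}_k$ aggregates the averaged moments $\sqrt{N^{-1}\sum \|Q_{kh}^\ell\|^2}$, $L_e^{-1/2}\sqrt{N^{-1}\sum \|P_{kh}^\ell\|^2}$ and the inhomogeneity $L_e^{-1}\epsilon \mathbf{W}_0$. The moment terms $\mathbf{A}_k$ are controlled uniformly for $kh\le T$ by the analogue of \Cref{lem:estimate1} for the $N$-particle discrete flow (which follows from \Cref{lem:aprioriQV} iterated, using \eqref{eq:CT}). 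Iterating $k$ times and using $k L_e^{1/2} h \le L_e^{1/2}T \le 1$ (by \eqref{eq:CT}) to keep $(1+C_1 L_e^{1/2}h)^k$ bounded, the cumulative error is $k \cdot C_2 h^{3/2} L_e^{3/4}\cdot \mathbf{A}\;=\;O(L_e^{1/2} T / h^{1/2})\cdot h^{3/2}\cdot(\ldots) = O(h^{3/2} L_e^{3/4})\cdot(\ldots)$ after absorbing the factor $L_e^{1/2}T$ into the constants; this delivers the prefactor $(L_e^{1/2}h)^{3/2}$ as claimed.

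The main obstacle I expect is the mean-field coupling: because $\nabla_i U$ depends on \emph{all} $N$ particles through $W$, the one-particle error cannot be closed by itself and the estimates must be propagated in the \emph{averaged} norm $N^{-1}\sum_\ell$. A careful splitting in \eqref{eq:L_i_mf} (separating the $\epsilon\tilde L$ self-contribution from the $\frac{\epsilon\tilde L}{N}\sum_\ell$ empirical average) is needed so that the Gronwall constants depend on $L_e=L+2\epsilon\tilde L$ rather than on $N$, preserving dimension-free and particle-free bounds. All remaining technical work is bookkeeping of the explicit numerical constants to match the factors $5$, $10$, and $3$ in \eqref{eq:strong_accuracy}.
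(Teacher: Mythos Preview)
Your decomposition of the one-step force mismatch into a centered randomization piece $M_k^i$, a propagation piece $B_k^i$, and a free-flow remainder $E_k^i$ is the right idea, but running the recursion in $L^1$ is a genuine gap that blocks the $h^{3/2}$ rate. The mean-zero property $\mathbb{E}[M_k^i\mid\mathcal{F}_k]=0$ gives no improvement in $L^1$: conditionally on $\mathcal{F}_k$ one has the pointwise bound $\|M_k^i\|\lesssim L_e h^2\|P_{kh}^i\|$ (just \eqref{eq:L_i_mf} applied to the deviation of the random evaluation point from the free segment on a step of length $h$), so $\mathbb{E}[\,\|M_k^i\|\mid\mathcal{F}_k]=O(L_e h^2)$, not $O(h^{3/2})$; passing through the conditional variance via Cauchy--Schwarz gives the same order because the conditional $L^2$ and $L^\infty$ sizes of $M_k^i$ coincide here. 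Feeding an $O(h^2)$ source into an $L^1$ Gr\"onwall over $T/h$ steps yields at best $O(Th)$. Your bookkeeping also slips: even granting a per-step $h^{3/2}$ source, summing $k=T/h$ of them gives $Th^{1/2}$, and ``absorbing $L_e^{1/2}T$'' cannot recover a missing power of $h$.

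The paper instead works in $L^2$. Setting $\Delta_k^\ell:=\mathbb{E}\wnorm{(Q_{kh}^\ell,P_{kh}^\ell)-(q_{kh}^\ell,p_{kh}^\ell)}^2$, telescoping through the exact flow $\varphi_h$, and conditioning the cross term on $\mathcal{F}_k$ allows Young's inequality (with parameter $L_e^{1/2}h$) to split $\sum_\ell\Delta_{k+1}^\ell$ into: (\textsc{i}) $(1+L_e^{1/2}h)$ times the propagated squared error, controlled by Lemma~\ref{lem:lipschitz_exact_flow}; (\textsc{ii}) $(L_e^{1/2}h)^{-1}$ times the squared \emph{local mean error} $\|\mathbb{E}[(Q_{k+1},P_{k+1})\mid\mathcal{F}_k]-\varphi_h(Q_k,P_k)\|^2$; and (\textsc{iii}) the local mean-squared error. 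Unbiasedness of the random midpoint makes (\textsc{ii}) of order $(L_e^{1/2}h)^5$ (Lemma~\ref{lem:rmm_local_mean_error}) while (\textsc{iii}) is $(L_e^{1/2}h)^4$ (Lemma~\ref{lem:rmm_local_L2_error}). Discrete Gr\"onwall with source $(L_e^{1/2}h)^4$ and growth rate $L_e^{1/2}h$ then gives $\sum_\ell\Delta_k^\ell=O((L_e^{1/2}h)^3)$, and a single Jensen step at the end delivers the $L^1$ bound $O((L_e^{1/2}h)^{3/2})$. The moral is that unbiasedness upgrades the local \emph{mean} error by one order, and only the $L^2$ framework lets this be cashed in through the cross term; an $L^1$ recursion simply cannot see it.
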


\begin{proof}
Let $\{ t_i := i h \}_{i \in \mathbb{N}_0}$ be an evenly spaced time grid.  Let $\mathcal{F}_{t_k}$ denote the sigma-algebra of events generated by $\{ \mathcal{U}_k \}_{k \in \mathbb{N}_0}$ in \eqref{eq:hamdyn_meanf_num} up to the $k$-th step. Let $\Delta^{\ell}_{t_{k}} := \mathbb{E} \left[  \wnorm{(Q_{t_k}^{\ell}(x,v), P_{t_k}^{\ell}(x,v)) - (q_{t_k}^{\ell}(x,v), p_{t_k}^{\ell}(x,v))}^2 \right]$ denote the squared $L^2$-error in the $\ell$-th component for $k \in \mathbb{N}_0$.   For any $(x,v) \in \mathbb{R}^{N d}$, let $\varphi_h(x,v) := (q_h(x,v),p_h(x,v))$ be the exact flow map of \eqref{eq:hamdyn_exact} from $(x,v)$ for a duration $h$. 
Applying Young's inequality yields: 
\begin{align}
     \sum_{\ell=1}^N  \Delta^{\ell}_{t_{k+1}} \ &\le \         \rn{1}_{t_{k+1}}+\rn{2}_{t_{k+1}}+\rn{3}_{t_{k+1}} \quad \text{where} 
     \label{eq:L2_expansion}
     \\
     \rn{1}_{t_{k+1}} \ &:= \  (1+ L_{e}^{1/2} h) \sum_{\ell=1}^N \mathbb{E} \left[  \wnorm{\varphi_h^{\ell}(Q_{t_{k}}(x,v), P_{t_{k}}(x,v)) - \varphi_h^{\ell}(q_{t_{k}}(x,v), p_{t_{k}}(x,v))}^2 \right]  \label{eq:rn1} \\
     \rn{2}_{t_{k+1}} \ &:= \ \frac{1}{L_{e}^{1/2} h}  \sum_{\ell=1}^N \mathbb{E} \left[  \wnorm{\mathbb{E} \left[ (Q_{t_{k+1}}^{\ell}(x,v), P_{t_{k+1}}^{\ell}(x,v)) \mid \mathcal{F}_{t_k} \right] - \varphi_h^{\ell}(Q_{t_{k}}(x,v), P_{t_{k}}(x,v))}^2 \right]
     \label{eq:rn2} \\
     \rn{3}_{t_{k+1}} \ &:= \  \sum_{\ell=1}^N \mathbb{E} \left[  \wnorm{(Q_{t_{k+1}}^{\ell}(x,v), P_{t_{k+1}}^{\ell}(x,v)) - \varphi_h^{\ell}(Q_{t_{k}}(x,v), P_{t_{k}}(x,v))}^2 \right]
     \label{eq:rn3}
\end{align}
Inserting \eqref{eq:lip_exact} from Lemma~\ref{lem:lipschitz_exact_flow} into $\rn{1}_{t_{k+1}}$ yields \begin{equation} \label{eq:sa_rn1}
\rn{1}_{t_{k+1}} \ \le \ (1+ L_{e}^{1/2} h) (1+ 7 L_{e}^{1/2} h) \sum_{\ell=1}^N  \Delta^{\ell}_{t_{k}} \ \le \ (1+ \frac{28}{3} L_{e}^{1/2} h) \sum_{\ell=1}^N  \Delta^{\ell}_{t_{k}} 
\end{equation} Inserting \eqref{eq:local_mean_error} from Lemma~\ref{lem:rmm_local_mean_error} into $ \rn{2}_{t_{k+1}}$ yields \begin{equation} \label{eq:sa_rn2}
\rn{2}_{t_{k+1}} \ \le \  7 (L_e^{1/2} h)^5  \sum_{\ell=1}^N \left(     \norm{Q_{t_k}^{\ell}(x,v)}^2 +  L_e^{-1} \norm{P_{t_k}^{\ell}(x,v)}^2
+ \frac{1}{3} L_e^{-2} \epsilon^2  \mathbf{W}_0^2 \right)
\end{equation}
Inserting \eqref{eq:local_L2_error} from  Lemma~\ref{lem:rmm_local_L2_error} into $\rn{3}_{t_{k+1}}$ \begin{equation} \label{eq:sa_rn3}
\rn{3}_{t_{k+1}} \ \le \  6 (L_e^{1/2} h)^4  \sum_{\ell=1}^N \left(     \norm{Q_{t_k}^{\ell}(x,v)}^2 +  L_e^{-1} \norm{P_{t_k}^{\ell}(x,v)}^2
+ \frac{1}{3} L_e^{-2} \epsilon^2  \mathbf{W}_0^2 \right)
\end{equation}
Combining \eqref{eq:sa_rn1}, \eqref{eq:sa_rn2}, and \eqref{eq:sa_rn3} and using $L_e h^2 \le 1/9$ yields \begin{align*}
  \sum_{\ell=1}^N  \Delta^{\ell}_{t_{k+1}} \ &\le \ (1+ \frac{28}{3} L_{e}^{1/2} h) \sum_{\ell=1}^N  \Delta^{\ell}_{t_{k}} + \frac{25}{3} (L_e^{1/2} h)^4  \sum_{\ell=1}^N \left(     \norm{Q_{t_k}^{\ell}(x,v)}^2 +  L_e^{-1} \norm{P_{t_k}^{\ell}(x,v)}^2
+ \frac{1}{3} \epsilon^2  \mathbf{W}_0^2 \right) \;.
\end{align*}
By discrete Gr\"{o}nwall's inequality (Lemma~\ref{lem:groenwall}) and Jensen's inequality \begin{align*}
& \mathbb{E} \frac{1}{N} \sum_{\ell=1}^N \wnorm{(Q_{t_{k+1}}^{\ell}(x,v), P_{t_{k+1}}^{\ell}(x,v)) - (q_{t_{k+1}}^{\ell}(x,v), p_{t_{k+1}}^{\ell}(x,v))}  \ \le \ \sqrt{ \frac{1}{N} \sum_{\ell=1}^N  \Delta^{\ell}_{t_{k+1}} } \\
& \qquad \ \le \  \sqrt{ \frac{25}{28} (L_e^{1/2} h)^3  e^{\frac{28}{3} L_e^{1/2} T} \frac{1}{N}\sum_{\ell=1}^N \left(     \norm{Q_{t_k}^{\ell}(x,v)}^2 +  L_e^{-1} \norm{P_{t_k}^{\ell}(x,v)}^2
+ \frac{1}{3} L_e^{-2} \epsilon^2  \mathbf{W}_0^2 \right) } \\
& \qquad \ \le \   5 (L_e^{1/2} h)^{3/2}  \sqrt{ \frac{1}{N}\sum_{\ell=1}^N      \left( \norm{Q_{t_k}^{\ell}(x,v)}^2 +  L_e^{-1} \norm{P_{t_k}^{\ell}(x,v)}^2 \right) 
+ \frac{1}{3}  L_e^{-2} \epsilon^2  \mathbf{W}_0^2 } \; . 
\end{align*}
By applying \eqref{apriori:QiVidev}, \eqref{apriori:QVdev} and \eqref{eq:CT} in succession and using Young's inequality
\begin{align*}
    &\frac{1}{N}\sum_{\ell=1}^N      \left( \norm{Q_{t_k}^{\ell}(x,v)}^2 +  L_e^{-1} \norm{P_{t_k}^{\ell}(x,v)}^2 \right)
    \\&  \qquad \ \le \ \frac{1}{N}\sum_{\ell=1}^N  2 \left( \frac{4}{3}\left(\norm{x^{\ell}}+2T \norm{v^{\ell}}\right) + L_e^{-1} \epsilon \frac{5}{3}\mathbf{W}_0+\frac{6\epsilon\tilde{L}T}{N}L_e^{-1/2} \sum_{k=1}^N\frac{4}{3}\left(\norm{x^{k}}+2T \norm{v^{k}}\right) \right)^2 
    \\ & \qquad \ \le \  \frac{1}{N}\sum_{\ell=1}^N  \left(\frac{16}{3}(\norm{x^{\ell}}+2T\norm{v^{\ell}})^2+\frac{20}{3}L_{e}^{-2} \epsilon^2 \mathbf{W}_0^2 + 32\left(\frac{\epsilon\tilde{L}T}{N}L_{e}^{-1/2} \sum_{k=1}^N  (\norm{x^{k}}+2T\norm{v^{k}})\right)^2  \right)
    \\ & \qquad \ \le \ \frac{1}{N}\sum_{\ell=1}^N \left( \frac{160}{5}\left(\norm{x^{\ell}}^2+ 4T^2\norm{v^{\ell}}^2\right)+ \frac{20}{3}L_{e}^{-2} \epsilon^2 \mathbf{W}_0^2 \right).
\end{align*}
Hence, 
\begin{align*}
    & \mathbb{E} \frac{1}{N} \sum_{\ell=1}^N \wnorm{(Q_{t_{k+1}}^{\ell}(x,v), P_{t_{k+1}}^{\ell}(x,v)) - (q_{t_{k+1}}^{\ell}(x,v), p_{t_{k+1}}^{\ell}(x,v))}  
    \\ & \qquad \ \le \ 5 (L_e^{1/2} h)^{3/2}  \left( \frac{4}{3}\sqrt{ \frac{1}{N}\sum_{\ell=1}^N  10\norm{x^{\ell}}^2 }+\frac{8}{3}\sqrt{ \frac{1}{N}\sum_{\ell=1}^N  10T^2\norm{v^{\ell}}^2 } + \sqrt{7}L_{e}^{-1} \epsilon \mathbf{W}_0 \right).
\end{align*}
Simplifying this bound gives the required result.
\end{proof}

The proof of Theorem~\ref{thm:strong_accuracy} uses a discrete Gr\"{o}nwall's inequality, which we include here for the reader's convenience.  
\begin{lemma}[Discrete Gr\"{o}nwall's inequality]
\label{lem:groenwall}
Let $\lambda, h \in \mathbb{R}$ be such that $1+\lambda h > 0$.  Suppose that $(g_k)_{k \in \mathbb{N}_0}$ is a non-decreasing sequence, and $(a_k)_{k \in \mathbb{N}_0}$ satisfies $a_{k+1} \le (1 + \lambda h) a_{k} + g_{k}$ for $k \in \mathbb{N}_0$.  Then it holds \[
a_k \ \le \ (1+\lambda h)^k a_0 + \frac{1}{\lambda h} ( (1+\lambda h)^k - 1) g_{k-1} , \quad \text{for all $k \in \mathbb{N}$} \;.
\]
\end{lemma}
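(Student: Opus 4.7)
The plan is to prove the bound by unrolling the one-step recursion $a_{k+1} \le (1+\lambda h) a_k + g_k$ into an explicit closed form and then using monotonicity of $(g_j)_{j \in \mathbb{N}_0}$ to replace each $g_j$ by the largest, namely $g_{k-1}$. This is the standard discrete analogue of the proof of Gr\"onwall's inequality for ODEs, where monotonicity of the inhomogeneous term plays the role of taking a supremum.

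Concretely, first I would argue by induction on $j \in \{1, \ldots, k\}$ that iterating the inequality yields
\begin{align*}
a_k \ \le \ (1+\lambda h)^k a_0 + \sum_{j=0}^{k-1} (1+\lambda h)^{k-1-j} g_j \;.
\end{align*}
The base case $j=1$ is just the hypothesis $a_1 \le (1+\lambda h) a_0 + g_0$; for the inductive step, one multiplies the bound at step $j$ by $(1+\lambda h) > 0$ (valid precisely because $1+\lambda h > 0$, preserving the inequality) and then adds $g_j$, which gives the bound at step $j+1$.

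Next, I would use the monotonicity assumption $g_j \le g_{k-1}$ for all $j \in \{0,\ldots,k-1\}$ to factor out $g_{k-1}$:
\begin{align*}
\sum_{j=0}^{k-1} (1+\lambda h)^{k-1-j} g_j \ \le \ g_{k-1} \sum_{i=0}^{k-1} (1+\lambda h)^{i} \ = \ g_{k-1} \cdot \frac{(1+\lambda h)^k - 1}{\lambda h} \;,
\end{align*}
where the reindexing $i = k-1-j$ and the geometric sum formula $\sum_{i=0}^{k-1} r^i = (r^k - 1)/(r-1)$ with $r = 1+\lambda h$ produce the stated constant. Combining the two displayed inequalities yields the claim.

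The proof is essentially mechanical; there is no real obstacle, only a minor care to ensure that multiplying through by $(1+\lambda h)$ preserves inequalities (hence the standing hypothesis $1+\lambda h > 0$) and that $\lambda h \neq 0$ so that the geometric sum has the closed form above. The degenerate case $\lambda h = 0$, if it were to arise, would be handled by passing to the limit, giving $a_k \le a_0 + k\, g_{k-1}$; but this is not needed for the application in \Cref{thm:strong_accuracy} where $\lambda h = (28/3)L_e^{1/2} h > 0$.
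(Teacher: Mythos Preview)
Your proof is correct and is the standard argument. Note that the paper itself states this lemma without proof (it is included ``for the reader's convenience''), so there is no paper proof to compare against; your unrolling-plus-geometric-sum argument is exactly what one expects and would serve well as a supplied proof.
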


\subsection{A Priori Bounds for Exact Flow}

\begin{lemma}[Growth Condition] 
\label{lem:growth_exact_flow}
Let $h > 0$ satisfy $L_e h^2 \le 1/9$ where $L_e = L+2 \epsilon \tilde{L}$.  Then for all $(x,v) \in \mathbb{R}^{N d}$  
\begin{equation}
\label{eq:growth_exact_flow}
    \begin{aligned}
        \sum_{\ell=1}^N \sup_{0 \le s \le h} \norm{q_s^{\ell}(x,v) - x^{\ell} - s v^{\ell}}^2  \ \le \  \frac{16}{5} (L_e h^2)^2  \sum_{\ell=1}^N \left(     \norm{x^{\ell}}^2 + h^2 \norm{v^{\ell}}^2
+ \frac{1}{3} L_e^{-2} \epsilon^2  \mathbf{W}_0^2 \right) \;.
    \end{aligned}
\end{equation}
\end{lemma}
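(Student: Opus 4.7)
The plan is to exploit the integral form of the exact Hamiltonian flow \eqref{eq:hamdyn_exact}, namely
\[
q_s^\ell(x,v) - x^\ell - s v^\ell \ = \ -\int_0^s (s-u)\, \nabla_\ell U(q_u(x,v))\, \rmd u,
\]
and control each gradient via the growth bound \eqref{eq:G_i_mf}. First I would apply Cauchy--Schwarz in time to get
\[
\norm{q_s^\ell - x^\ell - s v^\ell}^2 \ \le \ \frac{s^3}{3}\int_0^s \norm{\nabla_\ell U(q_u)}^2 \rmd u,
\]
and then, using $(a+b+c)^2 \le 3(a^2+b^2+c^2)$ together with \eqref{eq:G_i_mf}, sum over $\ell$ to obtain
\[
\sum_{\ell=1}^N \norm{\nabla_\ell U(q_u)}^2 \ \le \ 3\bigl[(L+\epsilon\tilde L)^2 + (\epsilon\tilde L)^2\bigr]\sum_{\ell=1}^N \norm{q_u^\ell}^2 + 3 N \epsilon^2 \mathbf{W}_0^2 \ \le \ 3 L_e^2 \sum_{\ell=1}^N \norm{q_u^\ell}^2 + 3 N \epsilon^2 \mathbf{W}_0^2,
\]
where the step $(L+\epsilon\tilde L)^2+(\epsilon\tilde L)^2 \le L_e^2$ follows because both terms are nonnegative and $L_e = L+2\epsilon\tilde L$.

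Next I would introduce $D_s^\ell := \sup_{t\le s}\norm{q_t^\ell - x^\ell - t v^\ell}$ and split $\norm{q_u^\ell}^2 \le 3(D_u^{\ell\,2} + \norm{x^\ell}^2 + u^2\norm{v^\ell}^2)$. Taking the supremum over $s\le h$ on both sides and combining the two preceding bounds yields a closed inequality of the shape
\[
\sum_{\ell=1}^N (D_h^\ell)^2 \ \le \ 3 (L_e h^2)^2 \sum_{\ell=1}^N (D_h^\ell)^2 + \text{constant}\cdot (L_e h^2)^2 \sum_{\ell=1}^N \Bigl(\norm{x^\ell}^2 + h^2\norm{v^\ell}^2 + L_e^{-2}\epsilon^2 \mathbf{W}_0^2\Bigr).
\]
The hypothesis $L_e h^2 \le 1/9$ makes the prefactor $3(L_e h^2)^2 \le 1/27 < 1$, so the self-referential term can be absorbed on the left. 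Tracking the numerical constants through the $(a+b+c)^2 \le 3(a^2+b^2+c^2)$ step and the $1/(1-3(L_e h^2)^2)$ factor yields a bound with constant at most $\tfrac{16}{5}$ in front of $(L_e h^2)^2$, which gives \eqref{eq:growth_exact_flow}.

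The main (mild) obstacle is bookkeeping: one must use a form of Cauchy--Schwarz that cleanly separates the purely deterministic forcing $\epsilon \mathbf{W}_0$ from the linear-in-$q$ terms, and one must sum over $\ell$ before taking the supremum so that the mean-field piece $(\epsilon\tilde L/N)\sum_k \norm{q_u^k}$ contributes $(\epsilon\tilde L)^2\sum_k \norm{q_u^k}^2$ (rather than $N (\epsilon\tilde L)^2$ per particle). No Gr\"onwall loop is needed since the smallness of $L_e h^2$ already absorbs the self-referential term in one shot.
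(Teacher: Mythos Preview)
Your proposal is correct and is essentially identical to the paper's proof: the same integral representation, the same Cauchy--Schwarz in time giving the $s^3/3$ factor, the same use of $(a+b+c)^2\le 3(a^2+b^2+c^2)$ together with \eqref{eq:G_i_mf} (and Jensen on the mean-field term), the same splitting of $\norm{q_u^\ell}^2$, and the same one-shot absorption via $3(L_e h^2)^2\le 1/27$; tracking the constants gives $81/26<16/5$, which is exactly the bound claimed.
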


\begin{proof}
By \eqref{eq:hamdyn_exact} and the Cauchy-Schwarz inequality, \begin{align}
& \sum_{\ell=1}^N \sup_{0 \le s \le h} \norm{q_s^{\ell} - x^{\ell} - s v^{\ell}}^2 =  \sum_{\ell=1}^N \sup_{0 \le s \le h} \norm{\int_0^s (s-r) \nabla_{\ell} U(q_r) dr}^2 \nonumber \\
& \quad   \le   \frac{h^3}{3} \sum_{\ell=1}^N  \sup_{0 \le s \le h} \int_0^s  \norm{\nabla_{\ell} U(q_r)}^2 dr  \overset{\eqref{eq:G_i_mf}}{\le}  h^3 \sum_{\ell=1}^N  \sup_{0 \le s \le h} \int_0^s   \left[ (L+\epsilon \tilde{L})^2 \norm{q_r^{\ell}}^2 + \frac{(\epsilon \tilde{L})^2}{N} \sum_{k=1}^N  \norm{q_r^{k}}^2 + L_e^{-2}\epsilon^2  \mathbf{W}_0^2  \right] dr  \nonumber \\
& \quad   \le   L_e^2 h^4  \sum_{\ell=1}^N \left( \sup_{0 \le s \le h}   \norm{q_s^{\ell}}^2  
+ \epsilon^2  \mathbf{W}_0^2 \right)   \le  3 L_e^2 h^4  \sum_{\ell=1}^N \big( \sup_{0 \le s \le h}   \norm{q_s^{\ell} - x^{\ell} - s v^{\ell}}^2   +  \norm{x^{\ell}}^2 + h^2 \norm{v^{\ell}}^2 
+ \frac{1}{3} L_e^{-2} \epsilon^2  \mathbf{W}_0^2 \big) . \nonumber 
\end{align}    
The required result follows by using $L_e h^2 \le 1/9$ and simplifying this expression.  
\end{proof}

\begin{lemma}[Lipschitz Continuity] 
\label{lem:lipschitz_exact_flow}
Let $h > 0$ satisfy $L_e h^2 \le 1/9$ where $L_e = L+2 \epsilon \tilde{L}$.  Then for all $(x,v), (y,u) \in \mathbb{R}^{N d}$ \begin{equation} \label{eq:lip_exact}
    \begin{aligned}
        \sum_{\ell=1}^N \sup_{0 \le s \le h} \wnorm{(q_s^{\ell}(x,v),p_s^{\ell}(x,v)) - (q_s^{\ell}(y,u),p_s^{\ell}(y,u))}^2 \  \le \ (1 + 7 L_e^{1/2} h ) \sum_{\ell=1}^N   \wnorm{(z_0^{\ell},w_0^{\ell})}^2  \;.
    \end{aligned}
\end{equation}
\end{lemma}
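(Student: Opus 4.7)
Denote the difference processes by $z_t := q_t(x,v) - q_t(y,u)$ and $w_t := p_t(x,v) - p_t(y,u)$, and set $\Delta_\ell U_t := \nabla_\ell U(q_t(x,v)) - \nabla_\ell U(q_t(y,u))$. For brevity, write $\tilde d_\ell(t) := \wnorm{(z^\ell_t, w^\ell_t)}^2 = \norm{z^\ell_t}^2 + L_e^{-1}\norm{w^\ell_t}^2$ and $D(t) := \sum_{\ell=1}^N \tilde d_\ell(t)$. From \eqref{eq:hamdyn_exact} we compute
\begin{equation*}
\frac{d}{dt}\tilde d_\ell(t) \ = \ 2\langle z^\ell_t, w^\ell_t\rangle - 2L_e^{-1}\langle w^\ell_t, \Delta_\ell U_t\rangle ,
\end{equation*}
and then Young's inequality with weight $L_e^{1/2}$, applied symmetrically in both inner products, yields
\begin{equation*}
\left|\frac{d}{dt}\tilde d_\ell(t)\right| \ \le \  2L_e^{1/2}\tilde d_\ell(t) + L_e^{-3/2}\norm{\Delta_\ell U_t}^2 .
\end{equation*}

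The crucial step is to obtain the \emph{sharp} componentwise Lipschitz bound. Starting from \eqref{eq:L_i_mf} and applying the weighted Cauchy--Schwarz inequality $(\alpha a + \beta b)^2 \le (\alpha + \beta)(\alpha a^2 + \beta b^2)$ with $\alpha = L+\epsilon\tilde L$ and $\beta = \epsilon\tilde L$ (so that $\alpha + \beta = L_e$), one gets
\begin{equation*}
\sum_{\ell=1}^N \norm{\Delta_\ell U_t}^2 \ \le \  L_e \sum_{\ell=1}^N \Big[(L+\epsilon\tilde L)\norm{z^\ell_t}^2 + \epsilon\tilde L\,\frac{1}{N}\sum_{k=1}^N \norm{z^k_t}^2\Big] \ = \ L_e^2 \sum_{\ell=1}^N \norm{z^\ell_t}^2 \ \le \ L_e^2\, D(t) .
\end{equation*}
This sharp constant, as opposed to the cruder $2L_e^2$ obtained from naive Cauchy--Schwarz, is essential in what follows.

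Now I introduce $M(t) := \sum_{\ell=1}^N \sup_{s\le t}\tilde d_\ell(s)$ and use the fundamental theorem of calculus together with the two bounds above:
\begin{equation*}
M(t) \ \le \ D(0) + \int_0^t \sum_{\ell=1}^N \left|\tfrac{d}{dr}\tilde d_\ell(r)\right| dr \ \le \ D(0) + 2L_e^{1/2}\int_0^t D(r)\, dr + L_e^{-3/2}\int_0^t L_e^2\, D(r)\, dr \ \le \  D(0) + 3L_e^{1/2}\int_0^t M(r)\, dr ,
\end{equation*}
where I used $D(r) \le M(r)$ in the last step. Gr\"onwall's inequality then gives $M(t) \le D(0)\, e^{3L_e^{1/2} t}$.

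It remains to convert the exponential into the claimed affine bound. By \eqref{eq:CT}, $L_e h^2 \le 1/9$, whence $3L_e^{1/2}h \le 1$. By convexity of $e^x$, the secant inequality $e^x \le 1 + (e-1)x$ holds on $[0,1]$, so
\begin{equation*}
e^{3L_e^{1/2} h} \ \le \  1 + 3(e-1)L_e^{1/2} h \ \le \ 1 + 7 L_e^{1/2} h ,
\end{equation*}
yielding the required estimate. The main obstacle is the interplay between the sharp factor $L_e^2$ in the Lipschitz estimate and the tight Young weight $L_e^{1/2}$ in the weighted norm: if either is replaced by its naive counterpart, the exponent rises to $4L_e^{1/2}h$ or higher, and the corresponding secant-bound constant exceeds $7$. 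Getting both sharp simultaneously is what makes the factor $7$ attainable under $L_e h^2 \le 1/9$.
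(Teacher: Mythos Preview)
Your proof is correct, and it takes a genuinely different route from the paper's.

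The paper works with the deviation from the free flow: it bounds $\sum_\ell \sup_{s\le h}\wnorm{(z_s^\ell,w_s^\ell)-(z_0^\ell+sw_0^\ell,w_0^\ell)}^2$ via the integral representations of $z_s^\ell-z_0^\ell-sw_0^\ell$ and $w_s^\ell-w_0^\ell$, applies Cauchy--Schwarz and \eqref{eq:L_i_mf}, and then obtains a self-referential inequality (the deviation appears on both sides) which is closed by rearrangement under $L_e h^2\le 1/9$; finally the free-flow contribution is added back. Your approach instead differentiates the squared weighted norm directly, uses Young's inequality with the natural weight $L_e^{1/2}$, and closes via an integral Gr\"onwall argument. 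The sharp aggregate bound $\sum_\ell\norm{\Delta_\ell U_t}^2\le L_e^2\sum_\ell\norm{z_t^\ell}^2$ you obtain (via the weighted Cauchy--Schwarz with $\alpha=L+\epsilon\tilde L$, $\beta=\epsilon\tilde L$) is a nice touch: it avoids the extra factor of $2$ from the crude estimate and keeps the Gr\"onwall exponent at $3L_e^{1/2}h$. Your route is more elementary and in fact delivers a slightly better constant, $1+3(e-1)L_e^{1/2}h\approx 1+5.16\,L_e^{1/2}h$, well within the claimed $1+7L_e^{1/2}h$. The paper's decomposition, on the other hand, is consistent with the style of the surrounding a~priori estimates (Lemmas~\ref{lem:estimate1}, \ref{lem:estimate2}, \ref{lem:aprioriQV}, \ref{lem:aprioriZW}), all of which track deviation from free flow, so it fits more naturally into the paper's overall framework.
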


\begin{proof}
 For $t \in [0,h]$, it is  convenient to define $(x_t,v_t) := (q_t(x,v),p_t(x,v)) $, $(y_t,u_t) := (q_t(y,u),p_t(y,u))$ and \begin{align*}
(z_t,w_t) \ := \ (x_t - y_t,v_t-u_t), \quad  \beta_t^{i} \ := \ \nabla V(x_t^{i}) - \nabla V(y_t^{i}),  \quad \Omega_t^{i} \ := \ \frac{1}{N} \sum_{\ell=1}^N \left( \nabla_1 W(x_t^{i}, x_t^{\ell}) - \nabla_1 W(y_t^{i}, y_t^{\ell}) \right) \;.
\end{align*}
Therefore, $\nabla_i U(x_t) - \nabla_i U(y_t) = \beta_t^{i} + \epsilon \Omega_t^{i}$.  By expanding \eqref{eq:wnorm} and using Cauchy-Schwarz inequality, 
\begin{align}
&        \sum_{\ell=1}^N \sup_{0 \le s \le h} \wnorm{(z_s^{\ell},w_s^{\ell}) - (z_0^{\ell}+s w_0^{\ell},w_0^{\ell})}^2 \ \le \  \sum_{\ell=1}^N \left( \sup_{0 \le s \le h}  \norm{z_s^{\ell}-z_0^{\ell}-s w_0^{\ell}}^2 + L_e^{-1} \sup_{0 \le s \le h}  \norm{w_s^{\ell}-w_0^{\ell}}^2 \right) \nonumber \\
& \quad \ \le \ \sum_{\ell=1}^N \left( \sup_{0 \le s \le h}  \norm{\int_0^s (s-r) [\beta_r^{\ell} + \epsilon \Omega_r^{\ell}] dr}^2 + L_e^{-1} \sup_{0 \le s \le h}  \norm{\int_0^s [\beta_r^{\ell} + \epsilon \Omega_r^{\ell}] dr}^2  \right) \nonumber \\
& \quad \ \le \ \sum_{\ell=1}^N \left( \frac{h^3}{3} + L_e^{-1} h \right)  \sup_{0 \le s \le h} \int_0^s  \norm{\beta_r^{\ell} + \epsilon \Omega_r^{\ell}}^2 dr \ \overset{\eqref{eq:L_i_mf}}{\le} \   \sum_{\ell=1}^N 2 \left( \frac{L_e^2 h^4}{3} + L_e h^2 \right)  \sup_{0 \le s \le h}  \norm{z_s^{\ell}}^2 \nonumber  \\
& \quad \ \le \ \sum_{\ell=1}^N 6 \left( \frac{L_e^2 h^4}{3} + L_e h^2 \right) \left( \sup_{0 \le s \le h} \norm{z_s^{\ell}-z_0^{\ell}-s w_0^{\ell}}^2  + \norm{z_0^{\ell}} + h^2 \norm{w_0^{\ell}}^2 \right) \nonumber 
\end{align}
The required result follows by using $L_e h^2 \le 1/9$ and simplifying this expression.  
\end{proof}

\subsection{Single-step Discretization Error Bounds}

\begin{lemma}[Local Mean Error of Randomized Time Integrator]  
\label{lem:rmm_local_mean_error} Let $h > 0$ satisfy $L_e h^2 \le 1/9$ where $L_e = L+2 \epsilon \tilde{L}$.  Then for all $(x,v) \in \mathbb{R}^{N d}$ it holds 
\begin{equation} \label{eq:local_mean_error}
    \begin{aligned}
 \sum_{\ell=1}^N       \wnorm{\mathbb{E} \left[ (Q_{h}^{\ell}(x,v), P_{h}^{\ell}(x,v)) \right] - (q_{h}^{\ell}(x,v), p_{h}^{\ell}(x,v))}^2 \ \le \  7 (L_e h^2)^3  \sum_{\ell=1}^N \left(     \wnorm{(x^{\ell},v^{\ell})}^2
+ \frac{1}{3} L_e^{-2} \epsilon^2  \mathbf{W}_0^2 \right) \;.
    \end{aligned}
\end{equation}

\end{lemma}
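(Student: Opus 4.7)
My plan is to exploit the fact that over a single step the randomized integrator has an explicit closed form. For $s\in[0,h)$ we have $\lfloor s\rfloor=0$, so $Q_s^{\star}(x,v)=x+h\mathcal{U}_0 v$ is constant in $s$. Integrating \eqref{eq:hamdyn_meanf_num} once and twice yields
\begin{equation*}
P_h(x,v) \ = \ v - h\,\nabla U(x+h\mathcal{U}_0 v), \qquad Q_h(x,v) \ = \ x + hv - \tfrac{h^2}{2}\,\nabla U(x+h\mathcal{U}_0 v).
\end{equation*}
Taking the expectation over $\mathcal{U}_0\sim\mathrm{Unif}[0,1]$ and substituting $s=hu$ gives
\begin{equation*}
\mathbb{E}[P_h^i] \ = \ v^i-\int_0^h\nabla_i U(x+sv)\,ds, \qquad \mathbb{E}[Q_h^i] \ = \ x^i+hv^i-\tfrac{h}{2}\int_0^h\nabla_i U(x+sv)\,ds.
\end{equation*}

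I would then compare these to the exact flow, for which $p_h^i=v^i-\int_0^h\nabla_i U(q_s)\,ds$ and $q_h^i=x^i+hv^i-\int_0^h(h-s)\,\nabla_i U(q_s)\,ds$. The $P$-error is directly $\mathbb{E}[P_h^i]-p_h^i=\int_0^h[\nabla_i U(q_s)-\nabla_i U(x+sv)]\,ds$, which by Cauchy--Schwarz and the Lipschitz bound \eqref{eq:L_i_mf} is pointwise controlled by $L_e$ times $|q_s^i-x^i-sv^i|$ plus a mean-field-type $N^{-1}$-sum. Summing over $i$, handling the mean-field term via Cauchy--Schwarz, and invoking the a priori free-flow deviation \eqref{eq:growth_exact_flow} from \Cref{lem:growth_exact_flow} produces $\sum_i L_e^{-1}|\mathbb{E}[P_h^i]-p_h^i|^2\lesssim(L_eh^2)^3\sum_i(\wnorm{(x^i,v^i)}^2+L_e^{-2}\epsilon^2\mathbf{W}_0^2)$ as needed.

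The $Q$-error splits as
\begin{equation*}
\mathbb{E}[Q_h^i]-q_h^i \ = \ \int_0^h\!\bigl(\tfrac{h}{2}-s\bigr)\nabla_i U(q_s)\,ds \ + \ \tfrac{h}{2}\int_0^h\!\bigl[\nabla_i U(q_s)-\nabla_i U(x+sv)\bigr]ds.
\end{equation*}
The second piece is treated exactly like the $P$-error and contributes at a higher power of $h$. The crucial observation is $\int_0^h(\tfrac{h}{2}-s)\,ds=0$, which lets me rewrite the first piece as $\int_0^h(\tfrac{h}{2}-s)[\nabla_i U(q_s)-\nabla_i U(x)]\,ds$. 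Then \eqref{eq:L_i_mf} bounds $|\nabla_i U(q_s)-\nabla_i U(x)|$ by $L_e$ times $|q_s^i-x^i|$ (plus an $N^{-1}$ sum). Splitting $|q_s^i-x^i|\le s|v^i|+|q_s^i-x^i-sv^i|$, the first summand combines with the $(\tfrac{h}{2}-s)$ weight (of vanishing integral) to give order $L_eh^3|v^i|$, while the second summand is controlled via \eqref{eq:growth_exact_flow}. Squaring, summing, and using $L_e h^2\le 1/9$ together with $|v^i|^2=L_e\cdot L_e^{-1}|v^i|^2\le L_e\wnorm{(x^i,v^i)}^2$ yields the claimed $(L_eh^2)^3$-scaling.

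The main difficulty is bookkeeping: the mean-field coupling in \eqref{eq:L_i_mf} prevents particle-by-particle estimates and forces repeated use of Cauchy--Schwarz to keep constants $N$-independent, and one must combine Young's inequality choices carefully so that the constant in front comes out at most $7$. The cancellation $\int_0^h(\tfrac{h}{2}-s)\,ds=0$ is the mechanism that upgrades the naive $O(h^2)$ local error of a left-endpoint rule to the $O(h^{5/2})$ (root of $h^5$) local mean error characteristic of the randomized midpoint method, and thereby explains why no Hessian-Lipschitz hypothesis on $V$ or $W$ is needed.
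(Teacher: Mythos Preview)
Your argument is correct and uses the same core ingredients as the paper (the explicit one-step formulas, the Lipschitz bound \eqref{eq:L_i_mf}, and the free-flow deviation \eqref{eq:growth_exact_flow}), but the $Q$-error is handled via a genuinely different decomposition. The paper writes
\[
\mathbb{E}[Q_h^\ell]-q_h^\ell \ = \ \int_0^h\!\!\int_0^h \tfrac{h-s}{h}\bigl[\nabla_\ell U(x+rv)-\nabla_\ell U(q_s)\bigr]\,dr\,ds
\]
and compares the randomized evaluation point $x+rv$ \emph{directly} to $q_s$; after two Cauchy--Schwarz applications the integrand becomes $|(r-s)v^\ell-(q_s^\ell-x^\ell-sv^\ell)|^2$, so the needed cancellation is encoded in the $(r-s)$ factor (whose double integral is $O(h^4)$) rather than in a separate zero-mean-weight argument. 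Your route---splitting off $\tfrac{h}{2}\times(\text{$P$-error})$ and then using $\int_0^h(\tfrac{h}{2}-s)\,ds=0$ to insert $\nabla_i U(x)$---makes the randomized-midpoint cancellation mechanism more transparent, at the price of having two pieces to track instead of one. Both give the same $(L_e h^2)^3$ scaling with comparable constants; the paper's double-integral packaging is slightly more compact, while yours better isolates \emph{why} no Hessian regularity is needed.
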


\begin{proof}
 As a first step, expand the squared, local mean error according to \eqref{eq:wnorm} \begin{align}
 \qquad \qquad & \sum_{\ell=1}^N  \wnorm{\mathbb{E} \left[ (Q_{h}^{\ell}(x,v), P_{h}^{\ell}(x,v)) \right] - (q_{h}^{\ell}(x,v), p_{h}^{\ell}(x,v))}^2 \le \rn{1} + \rn{2} \quad \text{where} \\
 \rn{1} \ &:= \  \sum_{\ell=1}^N \left| \int_0^h \int_0^h \frac{(h-s)}{h} \left[ \nabla_{\ell} U(x+r v) - \nabla_{\ell} U(q_s) \right] dr ds \right|^2 \;, \\
  \rn{2} \ &:= \ L_e^{-1} \sum_{\ell=1}^N \left|  \int_0^h \left[ \nabla_{\ell} U(x+s v) - \nabla_{\ell} U(q_s) \right]  ds \right|^2  \;.
\end{align}
    By applying Cauchy-Schwarz inequality twice and then inserting \eqref{eq:L_i_mf}, 
\begin{align}
& \rn{1} \ = \ \sum_{\ell=1}^N \left| \int_0^h \frac{(h-s)}{h} \int_0^h  \left[ \nabla_{\ell} U(x+r v) - \nabla_{\ell} U(q_s) \right] dr ds \right|^2  \nonumber \\
& \quad \ \le \ \int_0^h \frac{(h-s)^2}{h^2} ds   \sum_{\ell=1}^N   \int_0^h  \left| \int_0^h  \left[ \nabla_{\ell} U(x+r v) - \nabla_{\ell} U(q_s) \right] dr  \right|^2 ds  \nonumber \\
& \quad \ \le \ \frac{h^2}{3} \sum_{\ell=1}^N  \int_0^h   \int_0^h \left|  \nabla_{\ell} U(x+r v) - \nabla_{\ell} U(q_s) \right|^2 dr   ds \nonumber \\
& \quad  \overset{\eqref{eq:L_i_mf}}{\le}    \frac{2 L_e^2 h^2}{3} \sum_{\ell=1}^N   \int_0^h   \int_0^h \left|  (r-s) v^{\ell} - (q_s^{\ell} - x^{\ell} - s v^{\ell} ) \right|^2 dr   ds  \nonumber \\
& \quad \ \le \  \frac{4 L_e^2 h^2}{3}  \sum_{\ell=1}^N \left( \frac{h^4}{12} |v^{\ell}|^2 +  h^2 \sup_{0 \le s \le h} \left| q_s^{\ell} - x^{\ell} - s v^{\ell}  \right|^2 \right)  \label{eq:lme:rn1}
\end{align}
where in the last two steps Young's product inequality was used.  Similarly, 
\begin{align}
& \rn{2}  \ \le \ L_e^{-1} h \sum_{\ell=1}^N   \int_0^h \left| \nabla_{\ell} U(x+s v) - \nabla_{\ell} U(q_s) \right|^2  ds  \overset{\eqref{eq:L_i_mf}}{\le}    2 L_e h^2 \sum_{\ell=1}^N  \sup_{0 \le s \le h} \left| q_s^{\ell} - x^{\ell} - s v^{\ell}  \right|^2    \;. \label{eq:lme:rn2}
\end{align}
Combining \eqref{eq:lme:rn1} and \eqref{eq:lme:rn2}   yields \begin{align}
& \rn{1} + \rn{2} \ \le \ \frac{ L_e^2 h^4}{9}  \sum_{\ell=1}^N  h^2 |v^{\ell}|^2 + \left(  \frac{4 L_e^2 h^4}{3}  + 2 L_e h^2 \right) \sum_{\ell=1}^N  \sup_{0 \le s \le h} \left| q_s^{\ell} - x^{\ell} - s v^{\ell}  \right|^2    \nonumber \\
& \quad \ \overset{\eqref{eq:growth_exact_flow}}{\le} \   (L_e h^2)^2   \sum_{\ell=1}^N   h^2 |v^{\ell}|^2  + 7 (L_e h^2)^3  \sum_{\ell=1}^N \left(     \norm{x^{\ell}}^2 
+ \frac{1}{3} L_e^{-2} \epsilon^2  \mathbf{W}_0^2 \right) \;.
\end{align}
Simplifying this expression by using $L_e h^2 \le 1/9$ gives the required bound.  \end{proof}

\begin{lemma}[Local Mean Squared Error of Randomized Time Integrator]  
\label{lem:rmm_local_L2_error}
Let $h > 0$ satisfy $L_e h^2 \le 1/9$ where $L_e = L+2 \epsilon \tilde{L}$.  Then for all $(x,v) \in \mathbb{R}^{N d}$ it holds
\begin{equation} \label{eq:local_L2_error}
    \begin{aligned}
\sum_{\ell=1}^N \mathbb{E} \left[  \wnorm{(Q_{h}^{\ell}(x,v), P_{h}^{\ell}(x,v)) - q_{h}^{\ell}(x,v), p_{h}^{\ell}(x,v))}^2 \right] \ \le \ 6 (L_e h^2)^2  \sum_{\ell=1}^N \left(     \wnorm{(x^{\ell},v^{\ell})}^2
+ \frac{1}{3} L_e^{-2} \epsilon^2  \mathbf{W}_0^2 \right) \;.
    \end{aligned}
\end{equation}
\end{lemma}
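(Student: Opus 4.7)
The plan is to closely mirror the proof of Lemma~\ref{lem:rmm_local_mean_error}, but without the cancellation that lowered one power of $L_e h^2$ in the mean error. Starting from the explicit one-step integral representations (note that on $[0,h)$ one has $Q_t^{\star} = x + h \mathcal{U}_0 v$)
\begin{align*}
Q_h^\ell(x,v) - q_h^\ell(x,v) &= -\int_0^h (h-s)\bigl[\nabla_\ell U(x + h \mathcal{U}_0 v) - \nabla_\ell U(q_s)\bigr] ds, \\
P_h^\ell(x,v) - p_h^\ell(x,v) &= -\int_0^h \bigl[\nabla_\ell U(x + h \mathcal{U}_0 v) - \nabla_\ell U(q_s)\bigr] ds,
\end{align*}
I would first apply Cauchy-Schwarz to each expression so that the squared integrals become integrals of squared integrands, picking up prefactors of $h^3/3$ and $h$ respectively.

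Next, summing over $\ell$ and combining \eqref{eq:L_i_mf} with $(a+b)^2 \le 2a^2 + 2b^2$ and Jensen's inequality for the empirical average over particles yields the compact componentwise bound
\[
\sum_{\ell=1}^N \left|\nabla_\ell U(y) - \nabla_\ell U(z)\right|^2 \ \le \ 2 L_e^2 \sum_{\ell=1}^N |y^\ell - z^\ell|^2,
\]
which I then apply with $y = x + h \mathcal{U}_0 v$ and $z = q_s$. Writing $q_s^\ell = x^\ell + s v^\ell + (q_s^\ell - x^\ell - s v^\ell)$ and using triangle/Young inequalities gives $|y^\ell - z^\ell|^2 \le 2 h^2 |v^\ell|^2 + 2 \sup_{0 \le s \le h} |q_s^\ell - x^\ell - s v^\ell|^2$. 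The second term is controlled by the growth bound \eqref{eq:growth_exact_flow} from Lemma~\ref{lem:growth_exact_flow}.

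Combining these pieces, the summed position error is bounded by a term of order $(L_e h^2)^2 h^2 |v^\ell|^2$ plus strictly higher-order corrections in $L_e h^2$, while the $L_e^{-1}$-weighted summed velocity error is bounded by a term of order $(L_e h^2)^2 L_e^{-1} |v^\ell|^2$ plus higher-order corrections (the two contributions from the supremum of $|q_s^\ell - x^\ell - s v^\ell|$ pick up an additional factor $(L_e h^2)^2$ via \eqref{eq:growth_exact_flow}). Using the identity $h^2 |v^\ell|^2 = (L_e h^2) L_e^{-1} |v^\ell|^2$ together with the step-size condition $L_e h^2 \le 1/9$, I recombine everything into the weighted norm \eqref{eq:wnorm} and absorb all higher-order terms into the final constant~$6$.

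The main obstacle is purely quantitative: unlike in the local-mean-error case of Lemma~\ref{lem:rmm_local_mean_error}, the leading $O(h v^\ell)$ contribution from $|x + h \mathcal{U}_0 v - q_s|$ does not vanish after expectation, so one loses a power of $L_e h^2$ and ends up with the characteristic strong order $(L_e h^2)^2$ of the randomized midpoint integrator. Tracking constants carefully---so that the final coefficient is $6$ rather than something substantially larger---requires a judicious balance between the direct contribution $2 h^2 |v^\ell|^2$ and the growth-bound contribution $\sup_s |q_s^\ell - x^\ell - s v^\ell|^2$, relying critically on the smallness $L_e h^2 \le 1/9$.
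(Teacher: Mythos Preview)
Your proposal is correct and follows essentially the same route as the paper's proof: the same integral representations, the same Cauchy--Schwarz step yielding the prefactor $h^3/3 + L_e^{-1}h$, the same componentwise Lipschitz bound derived from \eqref{eq:L_i_mf}, the same Young splitting $|(h\mathcal{U}_0 - s)v^\ell - (q_s^\ell - x^\ell - sv^\ell)|^2 \le 2h^2|v^\ell|^2 + 2\sup_s|q_s^\ell - x^\ell - sv^\ell|^2$, and the same appeal to the growth bound \eqref{eq:growth_exact_flow}. The paper's proof is simply a terser write-up of exactly this argument.
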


\begin{proof}
Let $\rn{10} : = \sum_{\ell=1}^N  \mathbb{E} \wnorm{ (Q_{h}^{\ell}(x,v), P_{h}^{\ell}(x,v))  - (q_{h}^{\ell}(x,v), p_{h}^{\ell}(x,v))}^2$
By applying Cauchy-Schwarz inequality, \begin{align}
& \rn{10} \overset{\eqref{eq:wnorm}}{=}   \mathbb{E}  \sum_{\ell=1}^N \left| \int_0^h (h-s) \left[ \nabla_{\ell} U(x+ h \mathcal{U}_0 v) - \nabla_{\ell} U(q_s) \right]  ds \right|^2 + L_e^{-1} \mathbb{E}  \sum_{\ell=1}^N \left|  \int_0^h \left[ \nabla_{\ell} U(x+ h \mathcal{U}_0 v) - \nabla_{\ell} U(q_s) \right]  ds \right|^2 \nonumber \\
&~~ \ \le \ \left( \int_0^h (h-s)^2 ds + L_e^{-1}  h  \right) \mathbb{E}  \sum_{\ell=1}^N  \int_0^h  \left| \nabla_{\ell} U(x+ h \mathcal{U}_0 v) - \nabla_{\ell} U(q_s) \right|^2  ds  \nonumber \\
&~~ \overset{\eqref{eq:L_i_mf}}{\le} 2 \left( \frac{L_e^2 h^3}{3} + L_e h \right) \mathbb{E}  \sum_{\ell=1}^N   \int_0^h \left|  (h \mathcal{U}_0 - s) v^{\ell} - (q_s^{\ell} - x^{\ell} - s v^{\ell})  \right|^2  ds  \nonumber \\
&~~ \ \le \ 4 \left( \frac{L_e^2 h^3}{3} + L_e h \right) \left(h^3 \sum_{\ell=1}^N  |v^{\ell}|^2 +  h  \sum_{\ell=1}^N \sup_{0 \le s \le h} \left| q_s^{\ell} - x^{\ell} - s v^{\ell}  \right|^2 \right) \nonumber 
\end{align}
Inserting \eqref{eq:growth_exact_flow} and using $L_e h^2 \le 1/9$ gives the required upper bound.
\end{proof}

\newpage 

\appendix

\section{Uniform-in-steps Second Moment Bound and Contractivity for xHMC} 

\label{appendix}

 In the following, we state a uniform-in-steps moment bound for xHMC for the mean-field particle system and an extension of a contraction result for xHMC  \cite[Theorem 3]{BouRabeeSchuh2023} with slightly less restrictive conditions on $\epsilon$ and $T$.
	\begin{lemma} [Uniform-in-steps second moment bound of xHMC for mean-field particle models] \label{lem:secondmoment_meanf}
		 Suppose \Cref{ass}, \eqref{eq:cond_t} and \eqref{eq:C_eps} hold. Suppose $\mathbb{E}[N^{-1}\sum_{i=1}^N|x_0^i|^2]<\infty$.
		Then there exists a constant $\mathbf{B}_2$ such that both the second moment of the Hamiltonian dynamics for $t\leq T$ is uniformly bounded, i.e.,
		\begin{align*}
		\sup_{0 \le t \le T}\mathbb{E}\Big[\frac{1}{N}\sum_{i=1}^N|{x}_t^ i|^2\Big]\leq \mathbf{B}_2<\infty,
		\end{align*}
		and the second moment of the Markov chain generated by nHMC is uniformly bounded, i.e.,
		\begin{align*}
		\sup_{m\in\mathbb{N}}\mathbb{E}\Big[\frac{1}{N}\sum_{i=1}^N|\bar{\mathbf{X}}_m^ i|^2\Big]\leq \mathbf{B}_2<\infty,
		\end{align*}
		Moreover, $\mathbf{B}_2$ is given by 
  \begin{align}\label{eq:mathbfC_2}
      \mathbf{B}_2=\mathbb{E}\left[N^{-1}\sum_{i=1}^N|x_0^i|^2\right]+\frac{13}{1280K}\left( 11d+\left(\frac{15\epsilon}{2\tilde{L}}+6\epsilon^2 T^2\right)\mathbf{W}_0^2+\mathcal{R}^2(2L+K) \right).
  \end{align}
  depends on $K$, $\epsilon$, $L$, $\mathcal{R}$, $T$, $d$, $\mathbf{W}_0$ and $\mathbb{E}[N^{-1}\sum_{i=1}^N|{x}_0^ i|^2]$.
	\end{lemma}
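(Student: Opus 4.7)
The plan is to mirror the proof of \Cref{lem:secondmoment}, but carried out at the level of the empirical second moment $A_t := \frac{1}{N}\sum_{i=1}^N |x_t^i|^2$ of the mean-field $N$-particle Hamiltonian flow \eqref{eq:hamdyn_exact} with potential \eqref{eq:U_meanf}. The key observation that makes the particle-system proof essentially identical to the single-particle nonlinear one is that the empirical measure plays exactly the role that the law $\bar\mu_t$ played in \Cref{lem:secondmoment}; in particular Cauchy--Schwarz gives $\frac{1}{N}\sum_{j=1}^N |x_t^j| \le \sqrt{A_t}$, which is the discrete analogue of $\mathbb{E}[\norm{\bar x_t}] \le \sqrt{\mathbb{E}[\norm{\bar x_t}^2]}$ used throughout \Cref{lem:secondmoment}.

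First I would introduce, for each particle $i$, the scalar quantities $a_t^i := |x_t^i|^2$, $b_t^i := 2\langle x_t^i, v_t^i\rangle$, together with $\rho_t^i := \langle x_t^i, \nabla V(x_t^i)\rangle$ and the mean-field interaction $\Theta_t^i := \frac{1}{N}\sum_{j=1}^N \nabla_1 W(x_t^i, x_t^j)$, so that $\nabla_i U(x_t) = \nabla V(x_t^i) + \epsilon\,\Theta_t^i$. The asymptotic strong co-coercivity bound \ref{ass_Vconv} gives, as in \eqref{ieq:rhot_moment}, $K a_t^i + L^{-1}|\nabla V(x_t^i)|^2 - \hat C \le \rho_t^i \le L a_t^i$. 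Writing $a_t^i$ via variation of parameters against the harmonic oscillator of frequency $\sqrt{K}$ reproduces \eqref{vop_bar_at_moment} componentwise: $a_T^i = c_T a_0^i + \int_0^T s_{T-r}(2|v_r^i|^2 - 2\rho_r^i + \delta_r^i)\,dr$ with $\delta_t^i := K a_t^i - 2\epsilon\langle x_t^i, \Theta_t^i\rangle$. Averaging over $i$ yields the same identity for $A_t$.

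The main technical step, which I expect to be the principal (though routine) obstacle, is controlling the cross-particle interaction term $\frac{1}{N}\sum_i |x_t^i|\,|\Theta_t^i|$. By \Cref{ass}\ref{ass_Wlip}, $|\Theta_t^i| \le \tilde L(|x_t^i| + \frac{1}{N}\sum_j |x_t^j|) + \mathbf{W}_0$. Using $(\frac{1}{N}\sum_j|x_t^j|)^2 \le A_t$ together with Young's inequality and \eqref{eq:C_eps}, one obtains the particle-averaged analogue of \eqref{eq:delta_moment}: $\frac{1}{N}\sum_i \delta_t^i \le (2K + K/45) A_t + \epsilon \tilde L\,\sup_{s\le T} A_s + 45\epsilon^2 K^{-1} \mathbf{W}_0^2$. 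Similarly, applying Cauchy--Schwarz to \eqref{eq:hamdyn_exact} in the synchronous/unperturbed form gives the particle-averaged version of \eqref{eq:Wt2_moment} and then, via Fubini as in \eqref{ieq:Wt2_moment}, an $L^2$-in-time bound $\int_0^t s_{t-r}\,\frac{1}{N}\sum_i|v_r^i|^2\,dr \le 8\int_0^t s_{t-r}\,\frac{1}{N}\sum_i |v_0^i|^2\,dr + t^2\int_0^t s_{t-s}(\frac{1}{N}\sum_i(8|\nabla V(x_s^i)|^2 + \tfrac{4}{3}\epsilon^2|\Theta_s^i|^2))\,ds$. The mean-field interaction is then absorbed through the bound $\frac{1}{N}\sum_i |\Theta_t^i|^2 \le 2\tilde L^2 A_t + 3\tilde L^2 \sup_{s\le t} A_s + 6\mathbf{W}_0^2$, the averaged analogue of \eqref{ieq:Omt_moment}.

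Assembling these pieces exactly as in the display \eqref{eq:bound_aT_moment}, using \eqref{eq:cond_t} to dominate the $L^{-1}|\nabla V|^2$ term and \eqref{eq:bound_c_T_moment} for $c_T$, plus the particle-averaged free-flow estimates that follow from \Cref{lem:estimate1} (or directly from the Cauchy--Schwarz based versions of \eqref{eq:bar_a2_moment}--\eqref{eq:bar_Ea2_moment}), yields an inequality of the form
\begin{equation*}
\mathbb{E}[A_t] \le \Bigl(1 - \tfrac{13}{1280} K t^2\Bigr)\mathbb{E}[A_0] + 11 t^2 d + t^2\Bigl(\tfrac{45\epsilon^2}{2K} + 6\epsilon^2 T^2\Bigr)\mathbf{W}_0^2 + t^2 \hat C,
\end{equation*}
where the $d$ comes from $\mathbb{E}[N^{-1}\sum_i |v_0^i|^2] = d$ since the refreshment velocities are i.i.d.\ standard Gaussian and $\hat C = (2L+K)\mathcal{R}^2$. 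Iterating this across successive xHMC steps (using the strict Banach-type contractive factor $1 - \tfrac{13}{1280}KT^2 < 1$) and taking the supremum in $t\in[0,T]$ produces the uniform-in-steps bound with $\mathbf{B}_2$ of the claimed form \eqref{eq:mathbfC_2}. The same estimate applied for $t\le T$ gives the uniform bound on the continuous Hamiltonian trajectory within a step.
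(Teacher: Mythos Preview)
Your proposal is correct and follows essentially the same approach as the paper's proof: both mirror the argument of \Cref{lem:secondmoment} by writing the per-particle variation-of-parameters identity, replacing the law $\bar\mu_t$ by the empirical measure (so that $\mathbb{E}[\norm{\bar x_t}]$ becomes $\frac{1}{N}\sum_j|x_t^j|\le\sqrt{A_t}$), and then averaging over $i$ to obtain the recursion $\mathbb{E}[A_t]\le(1-\tfrac{13}{1280}Kt^2)\mathbb{E}[A_0]+11t^2d+t^2(\tfrac{45\epsilon^2}{2K}+6\epsilon^2T^2)\mathbf{W}_0^2+t^2\hat C$ before iterating. The only cosmetic difference is that the paper keeps the per-particle quantities $a_t^i$ through the intermediate bounds and averages at the end, whereas you average immediately; both routes land on the same inequality.
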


	\begin{proof} 
Though the proof is similar to the proof of \Cref{lem:secondmoment}, a complete proof is given for the reader's convenience. For $i \in \{ 1,\ldots,N \}$, define 
\begin{align*}
 \Theta_t^i :=\sum_{j=1}^N \nabla_1 W(x^i_t, x^j_t) \;.
\end{align*}
Then, $\nabla_i U(x_t) = \nabla V(x^i_t) + \epsilon \Theta_t^i$. Let $a^i_t := \norm{x^i_t}^2$ and  $b^i_t := 2 \langle x^i_t, v^i_t \rangle$.  Note that $\rho_t^i := \langle x^i_t, \nabla V(x^i_t) \rangle$ satisfies 
\begin{equation} \label{ieq:rhot_momentx}
K a^i_t + \frac{1}{L} \norm{\nabla V(x^i_t)}^2 - \hat{C} \overset{\ref{ass_Vconv}}{\le}  \rho_t^i \overset{\ref{ass_Vlip}}{\le}  L a^i_t.
\end{equation}
From \eqref{eq:hamdyn_exact},  
\begin{equation}
    \begin{aligned}
        \frac{d}{dt} a^i_t := b^i_t, \quad \frac{d}{dt} b^i_t = 2 |v^i_t|^2 - 2 \langle x^i_t, \nabla V(x^i_t) \rangle - 2 \epsilon \langle x^i_t, \Theta_t^i \rangle  \;. 
    \end{aligned}
\end{equation}
By variation of parameters, for $t\le T$
\begin{equation} \label{vop_bar_at_momentx} 
    a^i_t = c_t a^i_0 + \int_0^t s_{t-r} \left( 2 \norm{v^i_r}^2 - 2 \rho^i_r + \delta^i_r \right) dr
\end{equation} 
where $\delta^i_t := K a^i_t  - 2 \epsilon \langle x^i_t, \Theta_t^i \rangle$, $c_t:=\cos(\sqrt{ K} t )$, and $s_t := \frac{\sin( \sqrt{ K} t )}{ \sqrt{ K}} $.  
We bound  $\delta_t^i$  in \eqref{vop_bar_at_momentx} from above by   
\begin{equation} \label{eq:delta_i} 
\begin{aligned}    
\delta_t^i  \  &\le \  K a^i_t  + 2 \epsilon \norm{x^i_t}\norm{\Theta^i_t} \ \overset{\ref{ass_Wlip}}{\le}  K a^i_t + 2 \epsilon \norm{x^i_t}\tilde{L}(\norm{x^i_t}+ \mathbb{E}[\norm{x^i_t}] +\mathbf{W}_0/\tilde{L}) \ 
\\ & \overset{\eqref{eq:C_eps}}{\le} \  2K a^i_t + \epsilon\tilde{L} \max_{r\le T} \Big(\frac{1}{N}\sum_{j=1}^N \norm{x^j_r}\Big)^2+ K/45 a^i_t + \epsilon^2 45/K\mathbf{W}_0^2  \  \qquad \text{for }t\le T. 
\end{aligned}
\end{equation}

To upper bound the terms involving $|v^i_t|^2$ in \eqref{vop_bar_at_momentx},  use \eqref{eq:hamdyn_exact}.   Therefore, 
\begin{align} \label{eq:Wt2_momentx}
 |v^i_t|^2 &\, = \,   \norm{  -\int_0^{t}  ( \nabla V(x^i_s) + \epsilon \Theta^i_s)  ds + v^i_0}^2 \ \, \le \,  8 \norm{v^i_0}^2 + \frac{8}{7} t  \int_0^{t} \norm{ \nabla V(x^i_s) + \epsilon \Theta^i_s}^2 ds \nonumber \\
&		\, \le \, 8 \norm{v^i_0}^2  + t  \int_0^{t}  \left(8\norm{\nabla V(x^i_s)}^2 + \frac{4}{3}\epsilon^2 \norm{ \Theta^i_s  }^2\right) ds 
\end{align}
where we used the Cauchy-Schwarz inequality.  
Therefore, combining \eqref{eq:Wt2_momentx} and \eqref{eq:sinus}, and by Fubini's Theorem,  we obtain as in \eqref{ieq:Wt2_moment} 
\begin{align}
& \int_0^t s_{t-r} |v^i_r|^2 dr \, \le \,  t^2  \int_0^t   s_{t-s} \left(4\norm{\nabla V(x^i_s)}^2 + \frac{2}{3}\epsilon^2 \norm{ \Theta^i_s  }^2\right)  ds + 8\int_0^t s_{t-r} |v^i_0|^2 dr \;. \label{ieq:Wt2_momentx}
\end{align} 
To upper bound the mean-field interaction force,   
\begin{align}  & \norm{\Theta_t^i}^2   \le \Big( \ \frac{1}{N}\sum_{j=1}^N \norm{ \nabla_1 W(x^i_t,x_t^j) }  \Big)^2  \overset{\ref{ass_Wlip}}{\le} \Big( \tilde{L} ( \norm{ x^i_t} + \frac{1}{N}\sum_{j=1}^N \norm{x^j_t} ) + \mathbf{W}_0 \Big)^2    
\\ & \quad \le 2 \tilde{L}^2 a^i_t + 3 \tilde{L}^2 \max_{s\le t}\Big(\frac{1}{N}\sum_{j=1}^N \norm{x^j_s}\Big)^2 + 6 \mathbf{W}_0^2.
\label{ieq:Omt_momentx}
\end{align}

 Inserting these bounds into the second term of \eqref{vop_bar_at_momentx} and simplifying yields similarly to \eqref{eq:s_T_moment}
\begin{align}
&  \int_0^t s_{t-r} \left( 2 \norm{v^i_r}^2 - 2 \rho^i_r + \delta^i_r \right) dr  \nonumber  \\
&  \le \frac{t^2}{2}\left(\left(\epsilon \tilde{L}  \frac{2 }{15}+\frac{K}{45}\right)\max_{s\le T}a^i_s + \epsilon \tilde{L}\frac{6}{5}\max_{s\le T}\Big(\frac{1}{N}\sum_{j=1}^N \norm{x^j_s}\Big)^2+  2\hat{C}+  \frac{45\epsilon^2}{K}\mathbf{W}_0^2+ 8\epsilon^2 T^2 \mathbf{W}_0^2 + 16 \norm{v^i_0}^2 \right).
\end{align}
Inserting this upper bound back into \eqref{vop_bar_at_momentx}, summing over the number of particles and using \eqref{eq:bound_c_T_moment} yields  
\begin{equation}
\begin{aligned}
& \frac{1}{N}\sum_{i=1}^N a_t^i  
\, \le \, (1 - (11/24) K t^2)\frac{1}{N}\sum_{i=1}^N a_0^i +  \frac{t^2}{2}\left( \frac{K}{15N}\sum_{i=1}^N \max_{s\le T}a^i_s + \epsilon \tilde{L}\frac{6}{5}\max_{s\le T}\Big(\frac{1}{N}\sum_{j=1}^N\norm{x^j_s}\Big)^2 \right)  
\\ & \qquad + 8t^2 \frac{1}{N}\sum_{i=1}^N\norm{v^i_0}^2 + t^2\left( \frac{45\epsilon^2}{2K}+4\epsilon^2 T^2 \right)  \mathbf{W}_0^2 +  t^2\hat{C} \;. 
\end{aligned} 
\label{eq:expa_momentx}
\end{equation}
Analogously to  \eqref{eq:lem_nonlin1} and \eqref{eq:lem_nonlin4} it holds for $t\le T$,
\begin{align*}
    &\max_{s\le t} |x_s^i|\le (1+(L+\epsilon\tilde{L})t^2)\max(|x^i|,|x^i+tv^i|)+t^2\epsilon\tilde{L} \max_{s\le t}\frac{1}{N}\sum_{j=1}^N |x^j| + \epsilon t^2 \mathbf{W}_0 \qquad \text{and }
    \\ & \max_{s\le t} \sum_{i=1}^N|x_s^i|\le (1+(L+2\epsilon\tilde{L})t^2)\max(\sum_{i=1}^N|x^i|,\sum_{i=1}^N|x^i+tv^i|)+ \epsilon t^2 \mathbf{W}_0.
\end{align*}
Hence by \eqref{eq:cond_t}, it holds analogously to \eqref{eq:bar_a2_moment} and \eqref{eq:bar_Ea2_moment}
\begin{align}
& \frac{1}{N}\sum_{i=1}^N \max_{s\le T} a^i_s 
\le  2 \left( \frac{5}{4}\right)^2 \frac{441}{400}\frac{1}{N}\sum_{i=1}^N a^i_0 + 4 \left( \frac{5}{4}\right)^2 \frac{441}{400} T^2 \frac{1}{N}\sum_{i=1}^N \norm{v^i_0}^2  + 4 \left(\frac{21}{20}\epsilon T^2 \mathbf{W}_0 \right)^2  \quad \text{and} \label{eq:bar_a2_momentx} \\
    & \max_{s\le T}\Big(\frac{1}{N}\sum_{i=1}^N\norm{x^i_s}\Big)^2 
    \le \frac{16}{15} \left(\frac{5}{4}\right)^2 \frac{1}{N}\sum_{i=1}^Na^i_0+ 32\left(\frac{5}{4}\right)^2T^2\frac{1}{N}\sum_{i=1}^N\norm{v^i_0}^2+ 32\epsilon^2 T^4\mathbf{W}_0^2. \nonumber 
\end{align}
Inserting these estimates into \eqref{eq:expa_momentx}, using \eqref{eq:C_eps} and \eqref{eq:cond_t} and taking expectation yields analogously to \eqref{eq:bound_aT_moment}
\begin{align}
    & \mathbb{E} \left[\frac{1}{N}\sum_{i=1}^N a^i_t\right]  
\le \left(1-\frac{13}{1280} Kt^2\right) \mathbb{E} \left[\frac{1}{N}\sum_{i=1}^N a^i_0\right] + 11t^2d + t^2\left( \frac{45\epsilon^2}{2K}+6\epsilon^2 T^2 \right)  \mathbf{W}_0^2 +  t^2\hat{C} \nonumber\;, 
\end{align}

By Grönwall's Inequality, there exists a uniform-in-time second moment bound $\mathbf{B}_2$ for both the second moment of the exact Hamiltonian dynamics $\mathbb{E}[(1/N)\sum_{i=1}^N |x^i|^2]$ and the second moment of the measure after arbitrary many xHMC steps. 
In particular, $\mathbf{B}_2$ depends on $T$, $K$, $L$ and $\epsilon$, $\mathbb{E}[N^{-1} \sum_{i=1}^N|x^i_0|^2]$, $d$, $\mathbf{W}_0$ and $\mathcal{R}$ and can be chosen of the form given in \eqref{eq:mathbfC_2}.\end{proof}

To prove contractivity for xHMC for mean-field models a \emph{particle-wise} coupling of two copies of the xHMC transition step on $\mathbb{R}^{N d}$ is used; cf.~\cite[\S 2.3]{BouRabeeSchuh2023} which  goes back to Eberle \cite[\S 3]{Eb2016A}. In particular, the coupling $(\mathbf{X}(x,x'),\mathbf{X}'(x,x'))$ is given by 
$\mathbf{X}^i(x,x') =q_t^i(x, \xi)$ and $\mathbf{X}'^i(x,x')=q_t^i(x', \eta)$, where $\xi$ and $\eta$ are defined on the same probability space such that $\xi\sim \mathcal{N}(0,I_{Nd})$ and $\eta$ is defined as follows.  Let $\{\mathcal{U}_i\}_{i=1}^N$ be $N$ i.i.d.~standard uniform random variables that are independent of $\xi$.
If $|x^i-{x}'^i| \ge \tilde{R}$, then the random initial velocities of the  $i$th-particles are coupled synchronously, i.e., $\eta^i \ = \  \xi^i$; and otherwise,
\begin{align*}
\eta^i \ = \  \begin{cases} \xi^i+\gamma z^i & \text{if  } \mathcal{U}_i\leq \dfrac{\varphi_{0,1}(e^i\cdot\xi^i+\gamma|z^i|)}{\varphi_{0,1}(e^i\cdot\xi^i)}, \\
\xi^i-2(e^i\cdot\xi^i)e^i & \text{else,}
\end{cases}
\end{align*}
where $z^i=x^i-x'^i$ and $e^i=z^i/|z^i|$ if $|z^i|\neq 0$, and else, $e^i$ is an arbitrary unit vector.
	
	\begin{theorem} [Contraction of xHMC for mean-field particle models] \label{thm:contr_meanf}
		Suppose \Cref{ass} holds. Assume $T \in (0,\infty)$ and $\epsilon\in[0,\infty)$ satisfying \eqref{eq:cond_T} and \eqref{eq:cond_eps}.
		Then for all $x,x'\in\mathbb{R}^{Nd}$,
		\begin{align*} 
		\mathbb{E} \Big[\frac{1}{N}\sum_{i=1}^N \rho({\mathbf{X}}^{i}(x,x'),{\mathbf{X}}'^{i}(x,x'))\Big]\leq (1-c)\mathbb{E}\Big[\frac{1}{N}\sum_{i=1}^N\rho(x,x')\Big],
		\end{align*}
		where $\rho$ is given in \eqref{eq:rho} and the contraction rate $c$ is given in \eqref{eq:c}.
	\end{theorem}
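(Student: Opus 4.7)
}
The proof closely mirrors that of Theorem~\ref{thm:contr_nonlinear}, with the key difference that the nonlinear expectation $\mathbb{E}[|\bar{z}_s|]$ appearing in the nHMC estimates is replaced by its empirical analogue $N^{-1}\sum_{\ell=1}^N |Z_s^{\ell}|$ coming from the mean-field particle interaction. Write $R^i=|\mathbf{X}^i(x,x')-\mathbf{X}'^i(x,x')|$, $r^i=|x^i-x'^i|$, $z^i=x^i-x'^i$, and $w^i=\xi^i-\eta^i$, and split the analysis of $\mathbb{E}[f(R^i)-f(r^i)]$ according to whether $r^i\geq \tilde R$ (synchronous coupling) or $r^i<\tilde R$ (maximal reflection coupling), exactly as in the proof of Theorem~\ref{thm:contr_nonlinear}.

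In the regime $r^i\geq \tilde R$, I apply the mean-field analogue of Lemma~\ref{lem:convexarea}: using $\xi^i=\eta^i$ together with variation-of-parameters and the bound $\Omega_t^i\leq \tilde{L}(|Z_t^i| + N^{-1}\sum_\ell |Z_t^\ell|)$ (which replaces \eqref{ieq:Omt}), one obtains
\begin{equation*}
\mathbb{E}[R^i] \,\leq\, (1-(1/16)KT^2)\,r^i + \epsilon\tilde L T^2 \sqrt{7/6+3/(2KT^2)} \max_{s\le T}\mathbb{E}\Big[N^{-1}\sum_{\ell=1}^N |Z_s^\ell|\Big],
\end{equation*}
which via concavity of $f$ yields the large-distance bound analogous to \eqref{eq:contr_largedist}. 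In the regime $r^i<\tilde R$, I decompose $\mathbb{E}[f(R^i)-f(r^i)]$ into the three contributions $\rn{1}+\rn{2}+\rn{3}$ exactly as in the proof of Theorem~\ref{thm:contr_nonlinear}, using the particle-wise coupling of initial velocities and invoking the deviation estimates \eqref{eq:lem_nonlin5}--\eqref{eq:lem_nonlin8} (whose derivations carry over verbatim to the mean-field setting because \eqref{eq:L_i_mf} is the $N$-particle analogue of \eqref{eq:lipschitz2}). The contributions involving $\max_{s\le T}\mathbb{E}[|\bar z_s|]$ become $\max_{s\le T}\mathbb{E}[N^{-1}\sum_\ell |Z_s^\ell|]$.

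Next, I sum over $i \in \{1,\ldots,N\}$ and divide by $N$. The key self-averaging step is a mean-field analogue of \eqref{eq:expec_z_s}: using the $N$-particle version of Lemma~\ref{lem:estimate2} together with the identity $\mathbb{E}[|w^\ell|\mathbf{1}_{\{r^\ell<\tilde R\}\cap\{w^\ell\neq-\gamma z^\ell\}}]\le (5/4)\gamma r^\ell$ (which already appears in the nHMC proof) gives
\begin{equation*}
\max_{s\le T} \mathbb{E}\Big[N^{-1}\sum_{\ell=1}^N |Z_s^\ell|\Big] \,\le\, \tfrac{45}{16}\, \mathbb{E}\Big[N^{-1}\sum_{\ell=1}^N r^\ell\Big].
\end{equation*}
Inserting this into the per-particle bounds and using the lower estimate $rf'(r)/f(r)\geq f'(R_1)\cdot\exp(-5\tilde R/(4T))\cdot(5/4)(\tilde R/T+2)$ from \eqref{eq:estimate_f}, together with condition \eqref{eq:cond_eps} on $\epsilon\tilde L$, absorbs the interaction term into the drift and yields the contraction rate $c=(KT^2/156)\exp(-5\tilde R/(4T))$.

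The main obstacle is the self-averaging step: one must verify that the mean-field interaction term, which on the $i$-th particle involves an average over \emph{all} particles' differences, can after summing in $i$ be controlled by the total drift. This works because the $\max_{s\le T}\mathbb{E}[N^{-1}\sum_\ell |Z_s^\ell|]$ term appears with coefficient $\epsilon\tilde L T^2$ that is small by \eqref{eq:cond_eps}, exactly mirroring the weak-interaction mechanism used in Theorem~\ref{thm:contr_nonlinear}. Once this is established the final inequality takes the form $\mathbb{E}[N^{-1}\sum_i f(R^i)] \le (1-c)\mathbb{E}[N^{-1}\sum_i f(r^i)]$, as required.
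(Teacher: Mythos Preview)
Your proposal is correct and follows essentially the same route as the paper: the paper's proof is explicitly described as a combination of the proof of \cite[Theorem 3.1]{BouRabeeSchuh2023} (whose structure is that of Theorem~\ref{thm:contr_nonlinear}) with the mean-field analogue of Lemma~\ref{lem:convexarea}, splitting into the cases $r^i\ge\tilde R$ and $r^i<\tilde R$, replacing $\max_{s\le T}\mathbb{E}[|\bar z_s|]$ by $\max_{s\le T}N^{-1}\sum_j |z_s^j|$, and then summing over particles and absorbing the interaction term via \eqref{eq:cond_eps}. The only cosmetic difference is that the paper records the large-distance contraction with prefactor $(1/8)KT^2$ (coming from $\sqrt{1-(1/4)KT^2}\le 1-(1/8)KT^2$ after absorbing $\hat C T^2$ using $|z_0^i|\ge\tilde R$) rather than your $(1/16)KT^2$, but either suffices for the final rate $c$ in \eqref{eq:c}.
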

	
	\begin{proof}
    The proof is a combination of the proof \cite[Theorem 3.1]{BouRabeeSchuh2023} whose idea of the proof is also applied in the proof of \Cref{thm:contr_nonlinear} and the analogous result of \Cref{lem:convexarea} for xHMC for mean-field particle systems.
		To avoid confusion, note that the interaction parameter $\epsilon$ in the potential $U$ in \cite{BouRabeeSchuh2023} differs by a factor $2$.
		Let $z_t^i=x_t^i-x_t'^i$ be the difference of the two positions of the $i$th particles at time $t\le T$.
		Following the proof of \Cref{lem:convexarea}, we can bound $a^i(T)=|z_T^i|^2$ by
		\begin{align*}
			a^i(T)& \le (1-(5/12)K T^2)|z_0^i|^2 +\epsilon^2\tilde{L}^2 T^4\Big( \frac{7}{6}+\frac{3}{2KT^2}\Big)\max_{s\le t} \Big(\frac{1}{N}\sum_{i=1}^N|z_s^j|\Big)^2+\hat{C}t^2
			\\ & \le (1-(1/4)K T^2)|z_0^i|^2+\epsilon^2\tilde{L}^2 T^4\Big( \frac{7}{6}+\frac{3}{2KT^2}\Big)\max_{s\le T} \Big(\frac{1}{N}\sum_{i=1}^N|z_s^j|\Big)^2
		\end{align*} 
		for $|z_0^i|\ge \tilde{R}$ with $\tilde{R}$ given in \eqref{eq:R1}.
    Hence, analogous to \eqref{eq:contr_largedist}, for $r^i=|z_0^i|\ge \tilde{R}$ and $R^i=|z_T^i|$,
    \begin{align*}
        \mathbb{E}[f(R^i)-f(r^i)]\leq f'(r^i)\mathbb{E}[R^i-r^i]\leq f'(r^i)(-1/8) K T^2r+f'(r^i)+\epsilon\tilde{L} T^2\sqrt{ \frac{7}{6}+\frac{3}{2KT^2}}\max_{s\leq T}\frac{1}{N}\sum_{j=1}^N|z^j_s|.
    \end{align*}
    For $r^i<\tilde{R}$, we obtain by following \cite[Theorem 3.1]{BouRabeeSchuh2023} or the proof of \Cref{thm:contr_nonlinear} for nonlinear HMC, respectively,
    \begin{align*}
        \mathbb{E}[f(R^i)-f(r^i)]\le -f'(r^i)\frac{21}{240}\gamma T r^i+f'(r^i)\frac{13}{12}\epsilon\tilde{L}T^2\max_{s\leq T}N^{-1}\sum_{j=1}^N|{z}_s^j|.
    \end{align*}
    Combining the two cases as in \cite[Theorem 3.1]{BouRabeeSchuh2023} and \Cref{thm:contr_nonlinear}, respectively, concludes the proof.
	\end{proof}

\section{Shallow Neural Networks}

\label{sec:shallow}

To better motivate the sampling of nonlinear probability measures, and as a supplement to this work, here we make concrete the connection between nonlinear probability measures and the training of a shallow neural network  \cite{MeMoNg18,chizat2018global,sirignano2020mean,RoVa2022, HuReSiSz21}.
The following is based largely on   \cite{HuReSiSz21}; see also \cite[\S 1.6.3]{Sc22Phd}.

In training neural networks, one is interested in finding a function $ f: \mathbb{R}^{d-1}\to \mathbb{R}$ such that for given input data $\mathbf{z} = (z_1,\ldots , z_{d-1}) \in \mathbb{R}^{d-1}$ and output data $y \in \mathbb{R}$, $f (\mathbf{z})$ provides a good approximation of
$y$. 
In a shallow neural network, we consider $f$ of the form $f (\mathbf{z}) =
\sum_{i=1}^N \beta^{i,N} \phi(\alpha^{i,N} \cdot \mathbf{z} )$, where $N$ represents the number of neurons and $\phi : \mathbb{R}\to \mathbb{R}$
is a bounded, continuous, non-constant activation function; a typical example being the
sigmoid function $\phi(r) = 1/(1 + e^{-r})$. The goal  is to find  $\alpha^{i,N} \in \mathbb{R}^{d-1}$
and  $\beta^{i,N} \in \mathbb{R}$ for $ i \in \{ 1, \ldots , N \}$ that solve the optimization problem
\begin{align*}
\min_{\alpha^{i,N}, \beta^{i,N}} \Big\{\int_{\mathbb{R}\times\mathbb{R}^{d-1}} \Big| y - \frac{1}{N}\sum_{i=1}^N  \beta^{i,N} \phi(\alpha^{i,N} \cdot \mathbf{z} )\Big|^2 \nu(\rmd y  \rmd \mathbf{z})  \Big\} \;,
\end{align*}
where $\nu$ is a measure with compact support over the data $(\mathbf{z},y)$.  Equivalently, this optimization problem can be formulated as an optimization problem over the set of empirical probability measures on $\mathbb{R}^{Nd}$  describing the parameters of the $N$-neuron neural network, i.e.,  $\mu_N(\rmd \bar{\beta} \rmd \bar{\alpha})=\sum_{i=1}^N\delta_{\beta^{i,N}}(\rmd \bar{\beta}^i)\delta_{\alpha^{i,N}}(\rmd \bar{\alpha}^i)$.

Besides being high-dimensional, this optimization problem is in general non-convex, and hence, it is hard, if not impossible, to solve.  Remarkably, in the infinite neuron limit $N \nearrow \infty$,  the problem of finding the optimal parameters in $\mathbb{R}^{N d}$ of the shallow neural network turns into a \emph{convex} optimization problem over the space of probability measures on $\mathbb{R}^d$ \cite{MeMoNg18,chizat2018global,sirignano2020mean,RoVa2022, HuReSiSz21}.
In particular, 
 with a suitable regularisation term given e.g.~by the relative entropy with respect to the $d$-dimensional standard normal distribution, the optimization problem  turns into 
\begin{align*}
\min_{\mu\in\mathcal{P}(\mathbb{R}^d) } \Big\{\int_{\mathbb{R}\times\mathbb{R}^{d-1}} | y - \mathbb{E}_{(\beta,\alpha)\sim \mu} [\beta \phi(\alpha \cdot \mathbf{z} )]|^2 \nu(\rmd y  \rmd \mathbf{z}) + H(\mu|\mathcal{N} (0, I_d)) \Big\} \;.
\end{align*}
The minimizer is now a nonlinear probability measure on $\mathbb{R}^d$.
To see the precise form of this measure, it helps to introduce $x=(\beta, \alpha)$ and $\varphi(x, \mathbf{z})=\beta\phi(\alpha \cdot \mathbf{z} )$. Then, as shown e.g.~in \cite{HuReSiSz21}, the minimizer is   \begin{align*}
\bar{\mu}_*(\rmd x) \propto \exp\Big(-\frac{|x|^2}{2}+ \int_{\mathbb{R}^d} 2\varphi(x, \mathbf{z})\Big(-y+\int_{\mathbb{R}^d} \varphi(\tilde{x},\mathbf{z})\bar{\mu}(\rmd \tilde{x})\Big) \nu(\rmd y \rmd \mathbf{z})\Big) \rmd x \;.
\end{align*}
In fact, $\bar{\mu}_*$ is  of the form \eqref{eq:invmeas_nonl} with 
\begin{align*}
& V(x)=\frac{|x|^2}{2}+2\int_{\mathbb{R}^d}y\varphi(x,\mathbf{z})\nu(\rmd y \rmd \mathbf{z}), \quad \text{and} \quad
 W(x,\tilde{x}) =2\int_{\mathbb{R}^d} \varphi(x,\mathbf{z})\varphi(\tilde{x},\mathbf{z})\nu(\rmd y \rmd \mathbf{z}) \;.
\end{align*}
The corresponding gradients are
\begin{align*}
&\nabla V(x)=x+2\int_{\mathbb{R}^d}y\nabla_x\varphi(x,\mathbf{z})\nu(\rmd y \rmd \mathbf{z}),\quad \text{and}
\quad \nabla_1 W(x,\tilde{x}) =2\int_{\mathbb{R}^d} \nabla_x\varphi(x,\mathbf{z})\varphi(\tilde{x},\mathbf{z})\nu(\rmd y \rmd \mathbf{z}) \;.
\end{align*}
Note that these functions and their gradients depend strongly on the data $(\mathbf{z},y)$.

The regularity and convexity assumptions in \Cref{ass} impose restrictions on the data and activation function.  In particular,
\ref{ass_Vconv} requires  the regularisation term to be sufficiently large such that $V$ is asymptotically  strongly convex. 
Moreover, the global gradient-Lipschitz continuity of $V$, $W$ in \ref{ass_Vlip} and \ref{ass_Wlip} clearly depends on global Lipschitz continuity of the underlying activation function $\phi$, which may not be satisfied in practice.  For example, if  $\phi$ is the sigmoid function, we can not guarantee  these assumptions hold unless we consider a measure $\bar{\mu}_*$ whose marginal distribution in the first component is confined to a finite interval.

\printbibliography

\end{document}